\definecolor{red}{rgb}{1,0,0}
\definecolor{magenta}{rgb}{1,0,1}
\definecolor{dartmouthgreen}{rgb}{0.05, 0.5, 0.06}
\definecolor{purple(x11)}{rgb}{0.63,0.36,0.94}
\definecolor{turquoise}{rgb}{0.25, 0.87, 0.81}
\newtheorem{theorem}{Theorem}[section]
\newtheorem{lemma}[theorem]{Lemma}
\newtheorem{proposition}[theorem]{Proposition}
\newtheorem{corollary}[theorem]{Corollary}
\newtheorem{conjecture}[theorem]{Conjecture}
\theoremstyle{definition}
\newtheorem{remark}[theorem]{Remark}
\newcommand{\Cal}[1]{\ensuremath{\mathcal{#1}}}
\newcommand{\lp}{\left(}
\newcommand{\rp}{\right)}
\def\C{{\mathbb C}}
\def\N{{\mathbb N}}
\def\Z{{\mathbb Z}}
\def\Q{{\mathbb Q}}
\def\O_K{{\Cal{O}_{K}}}
\def\O_F{{\Cal{O}_{F}}}
\def\N_F{{\Cal{N}_{F/\Q}}}
\def\O_K{{\Cal{O}_{K}}}
\def\O_F{{\Cal{O}_{F}}}
\def\N_F{{\Cal{N}_{F/\Q}}}
\DeclareMathOperator{\arm}{arm}
\DeclareMathOperator{\leg}{leg}
\DeclareMathOperator{\coarm}{coarm}
\DeclareMathOperator{\coleg}{coleg}
\numberwithin{equation}{section}
\numberwithin{theorem}{section}
\title{Inequalities and asymptotics for hook numbers in restricted partitions}
\author{William Craig}
\address{Department of Mathematics, United States Naval Academy, 572C Holloway Road
Mail Stop 9E. Annapolis, MD 21402}
\email{wcraig@usna.edu}
\thanks{The first author is the corresponding author.}
\author{Madeline Locus Dawsey}
\address{University of Texas at Tyler,
3900 University Boulevard, Tyler, TX 75799, USA}
\email{mdawsey@uttyler.edu}
\author{Guo--Niu Han}
\address{IRMA, Universit\'e de Strasbourg et CNRS, 7 rue Ren\'e Descartes, 67084 Strasbourg, France}
\email{guoniu.han@unistra.fr}
\keywords{hook length, distinct partitions, odd partitions, partition asymptotics, partition inequalities}
\subjclass[2020]{11P82, 05A17, 05A15, 05A19, 11P83}
\begin{document} 
 
\maketitle

\begin{abstract}
    In this paper, we consider the asymptotics of hook numbers of partitions in restricted classes. More specifically, we compare the frequency with which partitions into odd parts and partitions into distinct parts have hook numbers equal to $h \geq 1$ by deriving an asymptotic formula for the total number of hooks equal to $h$ that appear among partitions into odd and distinct parts, respectively. We use these asymptotic formulas to prove a recent conjecture of the first author and collaborators that for $h \geq 2$ and $n \gg 0$, partitions into odd parts have, on average, more hooks equal to $h$ than do partitions into distinct parts. We also use our asymptotics to prove certain probabilistic statements about how hooks distribute in the rows of partitions.
\end{abstract}

\section{Introduction} \label{intro}

A {\it partition} $\lambda = \lp \lambda_1, \lambda_2, \dots, \lambda_\ell \rp$ of an integer $n \geq 0$ is a representation of $n$ in the form $n = \lambda_1 + \lambda_2 + \dots + \lambda_\ell$, where reorderings of this sum are considered identical. We use the standard notations $\lambda \vdash n$ or $\left| \lambda \right| = n$ to say that a partition $\lambda$ has {\it size} $n$, and we let $\ell := \ell\lp \lambda \rp$ denote the number of parts, or {\it length}, of $\lambda$. We also let $p(n)$ be the number of partitions of $n$. The set of all partitions is denoted $\mathcal{P}$. The theory of partitions, and the closely related theory of $q$-series, have a vast literature and intersect regularly with combinatorics, harmonic analysis, number theory, mathematical physics, and many other areas of mathematics. For an overview of the theory of partitions, see Andrews' seminal book \cite{Andrews}.

In this paper, we focus on the hook numbers of partitions. These are defined in terms of the {\it Ferrers diagram} of the partition $\lambda$, whereby $\lambda = \lp \lambda_1, \dots, \lambda_\ell \rp$ is represented as a diagram of left-adjusted rows of boxes in which the $i$th row contains $\lambda_i$ boxes. The {\it hook number} $h_{i,j}(\lambda)$ of the cell in the $i$th row and $j$th column of $\lambda$ is defined as the length of the $L$-shape formed by the boxes below and to the right of this box, including the box itself. We represent the multiset of all hook numbers of $\lambda$ as $\mathcal H\lp \lambda \rp$. We give an example below in Figure \ref{Figure} for the partition $4+3+2$.

\begin{figure}[H] \label{Figure}
\centering $$\begin{ytableau} 6&5&3&1 \\ 4&3&1 \\ 2&1  \end{ytableau}$$
\caption{{\small Hook numbers of the partition $\lambda=(4,3,2)$}}
\end{figure}
Hook numbers of partitions have very important implications for the representation theory of symmetric groups; the irreducible representations of $S_n$ are in bijection with partitions of $n$, and the dimensions of these representations are a function of the hook numbers of $\lambda$. Hook numbers have in recent decades also been the subject of many interesting arithmetic studies. This has arisen in large part due to the Nekrasov--Okounkov hook length formula, which connects hook numbers in a deep way to modular forms and $q$-series. This formula \cite{H,NO} says that for any complex number $z$, we have
\begin{align*}
    \sum_{\lambda \in \mathcal P} x^{|\lambda|}\prod_{h \in \mathcal H(\lambda)} \left(1-\frac{z}{h^2}\right) = \prod_{k=1}^\infty (1-x^k)^{z-1}.
\end{align*}
These connections have led to many interesting studies in the number theory and combinatorics literature regarding the asymptotic, combinatorial, and arithmetic properties of the hook numbers of partitions, especially studies into $t$-core partitions and $t$-hooks of partitions \cite{AS, BOW, CKNS, GKS, GO, H, OS}. 

The object of this paper is to prove a recent conjecture of the first author and collaborators \cite{BBCFW} on hook numbers in {\it restricted} classes of partitions, where analogues of the Nekrasov--Okounkov formula are not known. To motivate the question, recall Euler's famous theorem that the number of partitions of $n$ with only odd parts is exactly equal to the number of partitions of $n$ all of whose parts are distinct integers. It is natural to wonder whether various partition statistics behave differently on these families of partitions. For instance, it is easy to show that on average, partitions into odd parts have more parts than do partitions into distinct parts. One can see this, for example, by using Glaisher's bijection \cite{Glaisher}, which for an odd partition containing $m_d$ repetitions of the odd numbers $d$, creates at most $\lfloor \log_2(m_d) \rfloor + 1$ new distinct parts in a corresponding partition into distinct parts.
Another result from a paper of Andrews \cite{A} can be interpreted in terms of hook numbers\footnote{Andrews states his results in terms of distinct part sizes, which are easily seen to be equivalent to hook numbers equal to 1.}, which we now explain. Letting $\mathcal O(n)$ and $\mathcal D(n)$ be the sets of partitions of $n$ into odd parts and distinct parts, respectively, define, for any integer $h \geq 1$,
\begin{align*}
    a_h(n) := \sum_{\lambda \in \mathcal O(n)} \#\{ x \in \mathcal H(\lambda) : x = h \},\quad b_h(n) := \sum_{\lambda \in \mathcal D(n)} \#\{ x \in \mathcal H(\lambda) : x = h \}.
\end{align*}
Because of Euler's result that $\mathcal O(n)$ and $\mathcal D(n)$ are in bijection, we can see that for each $n \geq 0$, we have
\begin{align} \label{Balanced Equation}
    n p_{\mathcal O}(n) = \sum_{h \geq 1} a_h(n) = \sum_{h \geq 1} b_h(n) = n p_{\mathcal D}(n),
\end{align}
where we let $p_{\mathcal O}(n) = \left| \mathcal O(n) \right|$ and $p_{\mathcal D}(n) = \left| \mathcal D(n) \right|$. Andrews proved \cite{A} that for $n \geq 0$, we have $b_1(n) \geq a_1(n)$. 
In light of \eqref{Balanced Equation}, it would be natural to suspect that the inequality of Andrews must be balanced out in some way by inequalities between $a_h(n)$ and $b_h(n)$ for some $h>1$ which are in the other direction. In \cite{BBCFW}, this concept was formulated as a much more concrete conjecture:
\begin{conjecture} \label{BBCFW Conjecture}
    Let $h \geq 2$ be fixed. Then the following are true:
    \begin{itemize}
        \item[(1)] There exists some integer $N_h > 0$ such that for all $n > N_h$, we have $a_h(n) \geq b_h(n)$.
        \item[(2)] There exists some constant $\gamma_h > 1$ such that $a_h(n)/b_h(n) \to \gamma_h$ as $n \to \infty$.
    \end{itemize}
\end{conjecture}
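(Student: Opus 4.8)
The plan is to attack Conjecture \ref{BBCFW Conjecture} by first producing exact generating functions for $a_h(n)$ and $b_h(n)$ and then extracting their main-term asymptotics as $n \to \infty$. The natural starting point is the standard Maya diagram (equivalently, first-column hook length) encoding of a partition $\lambda$ as a $0/1$ profile word $(c_p)_p$, under which the number of cells of $\lambda$ with hook length exactly $h$ equals the number of indices $p$ with $c_p = 0$ and $c_{p+h} = 1$. Computing $a_h(n)$ and $b_h(n)$ is therefore a two-point correlation problem at displacement $h$ for the ensembles of odd and distinct partitions. First I would repackage this as the generating functions
\[
A_h(q) = \sum_{n \ge 0} a_h(n)\, q^n, \qquad B_h(q) = \sum_{n \ge 0} b_h(n)\, q^n,
\]
by decomposing a partition in the given class around a marked cell of hook length $h$ with arm $a$ and leg $l$, where $a + l = h - 1$. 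For each admissible split the partition factors into an upper-left block, the local hook region, and a lower-right subpartition, and the odd or distinct constraint restricts which splits and local shapes occur. Summing the finitely many resulting products, I expect expressions of the form
\[
A_h(q) = \left( \prod_{k \ge 1} \frac{1}{1 - q^{2k-1}} \right) R_h^{\mathcal{O}}(q), \qquad B_h(q) = \left( \prod_{k \ge 1} (1 + q^k) \right) R_h^{\mathcal{D}}(q),
\]
where $R_h^{\mathcal{O}}$ and $R_h^{\mathcal{D}}$ are explicit correction factors (finite sums of rational functions of $q$, with poles only at roots of unity) recording the local hook data.

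The second stage is asymptotic. Both base products have their dominant singularity at $q = 1$, where they grow like $\exp\!\big(\pi^2/(12(1-q))\big)$ up to algebraic factors, and Euler's theorem guarantees $p_{\mathcal{O}}(n) = p_{\mathcal{D}}(n)$, so the two base classes share the identical growth $\asymp n^{-3/4} e^{\pi \sqrt{n/3}}$. I would then feed $A_h(q)$ and $B_h(q)$ into a saddle-point and Tauberian analysis (the Wright circle method, i.e.\ Ingham-type theorems applied near $q = 1$), so that the coefficient asymptotics are governed entirely by the behavior of the correction factors as $q \to 1^-$. This should yield
\[
a_h(n) \sim \alpha_h\, c(n), \qquad b_h(n) \sim \beta_h\, c(n) \qquad (n \to \infty),
\]
for a common growth function $c(n)$ and explicit constants $\alpha_h, \beta_h$ read off from the leading $q \to 1$ expansions of $R_h^{\mathcal{O}}$ and $R_h^{\mathcal{D}}$. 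Part (2) of the conjecture then follows at once with $\gamma_h = \alpha_h/\beta_h = \lim_{q \to 1^-} R_h^{\mathcal{O}}(q)/R_h^{\mathcal{D}}(q)$, and part (1) follows from $\gamma_h > 1$, since a ratio converging to a limit exceeding $1$ is eventually $> 1$ (giving the threshold $N_h$).

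The obstacle is twofold. The routine-but-delicate task is assembling $R_h^{\mathcal{O}}$ and $R_h^{\mathcal{D}}$ correctly. Here the distinct-parts condition forces $l \le a$ in the local hook region, since strictly decreasing row lengths cannot fill a tall, narrow leg, whereas odd parts impose parity conditions on the local block instead; the two families of admissible local shapes are genuinely different. This asymmetry is exactly what should drive the inequality: already for $h = 2$, an odd partition may realize a hook with $(a,l) = (0,1)$ (for instance the single hook of $1+1$), a configuration forbidden for distinct partitions, so odd partitions enjoy an extra channel for hooks of length $h$. The real crux, however, is the final comparison, namely proving $\gamma_h > 1$ for \emph{every} fixed $h \ge 2$, in contrast to the reversed inequality implicit in Andrews' result $b_1(n) \ge a_1(n)$. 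I expect this to reduce to an explicit inequality between the two limiting hook-$h$ densities at $q = 1$, which I would establish by evaluating both sides in closed form and bounding their difference. Handling all $h \ge 2$ uniformly, rather than case by case, is where I anticipate the argument will demand the most care.
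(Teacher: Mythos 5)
Your overall architecture matches the paper's: decompose the partition around a marked cell of hook length $h$ into an arm/leg/coarm cell configuration with the surrounding regions, assemble generating functions of the form (infinite product) $\times$ (correction factor), extract asymptotics $a_h(n) \sim \alpha_h c(n)$, $b_h(n) \sim \beta_h c(n)$ by Euler--Maclaurin plus Wright's circle method, and deduce both parts of the conjecture from $\gamma_h = \alpha_h/\beta_h > 1$. Your observation that the distinct-parts condition forces arm $\geq$ leg, while odd parts impose parity constraints instead, is exactly the mechanism in the paper's Theorem \ref{th:gf:ab}. Two points, however, keep this from being a proof rather than a plan. First, a small structural inaccuracy: your expectation that the correction factor for distinct parts is a finite sum of rational functions fails for odd $h$; the paper's Theorem \ref{th:rat:ab} shows that $\tilde b_{2h+1}(q)/(-q;q)_\infty$ contains the non-rational Lambert-type series $\sum_{k\geq 1} q^k/(1+q^k)$, which is precisely why $\beta_h \in \Q(\log 2)\setminus\Q$ for odd $h$. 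This does not derail the circle method (the series still contributes $\log(2)/z$ near $q=1$), but your claimed closed form would need to be corrected before the asymptotic step.

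The genuine gap is the step you yourself flag as the crux and then leave unresolved: proving $\gamma_h > 1$, i.e.\ $\alpha_h > \beta_h$, for \emph{every} $h \geq 2$. Saying you would "evaluate both sides in closed form and bound their difference" is not yet an argument, and this is where essentially all of the paper's remaining work lies (its Section \ref{Constants}). The paper's resolution requires several nontrivial identities: a closed form for $\alpha_h$ in terms of harmonic numbers,
\begin{equation*}
\alpha_h = \tfrac{1}{2}\left( H_h - H_{\lceil (h-1)/2 \rceil} + H_{h-1} - H_{\lceil (h-2)/2 \rceil} \right),
\end{equation*}
which shows $\alpha_h$ is strictly increasing with limit $\log 2$; explicit dyadic-sum formulas for $\beta_h$ (separately for even and odd $h$, the odd case using the polynomial and summation lemmas), which show $\beta_{2n} < \frac{\log 3}{2}$ and that $\beta_{2n+1}$ decreases from $\beta_3 = \log 2 - \frac18$; and finally the base cases $\alpha_2 = \frac34 > \frac12 = \beta_2$, $\alpha_3 = \frac23 > \log 2 - \frac18 = \beta_3$, so that for $h \geq 4$ one gets $\alpha_h > \alpha_3 > \beta_3 \geq \beta_h$. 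Without some substitute for this monotonicity-plus-closed-form argument, your plan establishes the existence of the limit $\gamma_h$ (part (2) minus the inequality) but neither part (1) nor the claim $\gamma_h > 1$, which is the content that was actually conjectural.
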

Note that Conjecture \ref{BBCFW Conjecture} (2) is a much stronger statement than Conjecture \ref{BBCFW Conjecture}~(1). In \cite{BBCFW}, Conjecture \ref{BBCFW Conjecture} is proved only in the cases $h=2$ and $h=3$, but no progress is made for any cases $h \geq 4$. We also note that the authors of \cite{BBCFW} prove that $\gamma_1 = \frac{1}{2 \log(2)}, \gamma_2 = \frac 32$, and $\gamma_3 = \frac{2}{3\lp \log(2) - 1/8 \rp}$. In this paper, we improve on the methods of \cite{BBCFW}, which enable us to prove Conjecture \ref{BBCFW Conjecture} in its entirety.

\begin{theorem} \label{Main Theorem}
    Conjecture \ref{BBCFW Conjecture} (2), and therefore also Conjecture \ref{BBCFW Conjecture} (1), is true for all $h \geq 2$.
\end{theorem}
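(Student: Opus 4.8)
The plan is to prove the strong asymptotic statement, Conjecture \ref{BBCFW Conjecture}~(2), by deriving an honest asymptotic formula for each of $a_h(n)$ and $b_h(n)$ and then taking the ratio, since (1) follows immediately once we know $a_h(n)/b_h(n) \to \gamma_h > 1$. The natural starting point is to encode each quantity in a generating function. For a fixed $h$, I would write
\begin{align*}
    A_h(x) := \sum_{n \geq 0} a_h(n) x^n, \qquad B_h(x) := \sum_{n \geq 0} b_h(n) x^n,
\end{align*}
and seek closed-form product/sum expressions for these. The key combinatorial input is to count, for each partition $\lambda$ in the relevant class, the number of cells whose hook length equals $h$. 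A hook of length $h$ in row $i$ is determined by an arm length $a \geq 0$ and a leg length $b \geq 0$ with $a + b + 1 = h$; such a configuration occurs precisely when the local shape of $\lambda$ around the relevant corner is constrained in a way dictated by $a$ and $b$. So I would fix the decomposition $h = a + b + 1$ and, for each of the $h$ choices of $(a,b)$, count partitions into odd (resp. distinct) parts that contain a cell realizing that specific arm/leg split. Summing a "marked cell" weight over all partitions in the class turns each $a_h(n)$, $b_h(n)$ into a sum over $(a,b)$ of generating functions that are small modifications of the odd/distinct partition generating functions $\prod_{k \geq 0}(1-x^{2k+1})^{-1}$ and $\prod_{k \geq 1}(1+x^k)$, respectively — the modification being a finite rational factor recording the forced local profile near the marked hook.

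Once $A_h(x)$ and $B_h(x)$ are expressed as the base infinite product times an explicit rational (Laurent) function in $x$, the asymptotics follow from a singularity/circle-method analysis. Both generating functions have their dominant behavior governed by the singularity of the underlying infinite product: by Euler's theorem the odd and distinct products agree, so to leading order $p_{\mathcal O}(n)$ and $p_{\mathcal D}(n)$ grow identically, with the familiar $\exp(\pi\sqrt{n/3})$-type rate coming from the $x \to 1$ singularity. The rational correction factors, being analytic and nonzero at $x = 1$, do not change the exponential rate; they only rescale the main asymptotic constant. I would therefore extract the leading asymptotics of $a_h(n)$ and $b_h(n)$ via a saddle-point or Ingham-Tauberian argument, where the decisive data is the \emph{value at $x = 1$} of each rational correction factor. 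Concretely, I expect
\begin{align*}
    a_h(n) \sim C_h^{\mathcal O}\, n p_{\mathcal O}(n) \cdot(\text{lower order}), \qquad b_h(n) \sim C_h^{\mathcal D}\, n p_{\mathcal D}(n)\cdot(\text{lower order}),
\end{align*}
so that the common growth factor cancels in the ratio and
\begin{align*}
    \gamma_h = \lim_{n \to \infty} \frac{a_h(n)}{b_h(n)} = \frac{C_h^{\mathcal O}}{C_h^{\mathcal D}}
\end{align*}
is an explicit finite positive number built from the two rational factors evaluated at $x=1$.

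The main obstacle, and the place where real work is needed, is twofold. First, I must produce the correction factors uniformly in $h$: counting cells with a prescribed arm/leg split inside odd-part partitions is genuinely more delicate than inside distinct-part partitions, because repeated parts create many cells with the same hook and force one to track consecutive equal rows and the parity of part sizes. Getting a clean generating-function identity valid for \emph{every} $h \geq 2$ (rather than the ad hoc $h = 2, 3$ computations of \cite{BBCFW}) is the crux; this is presumably where the paper's improvement over the earlier methods lies. Second, after obtaining $\gamma_h = C_h^{\mathcal O}/C_h^{\mathcal D}$ in closed form, I must verify the strict inequality $\gamma_h > 1$ for all $h \geq 2$. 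This reduces to a concrete inequality between the two evaluated rational factors, and I would expect it to follow from a monotonicity or convexity estimate on the explicit closed form, possibly after isolating the $h = 2, 3$ cases (whose values $\tfrac32$ and $\tfrac{2}{3(\log 2 - 1/8)}$ are already known) and handling $h \geq 4$ by a general bound. Establishing this last inequality uniformly in $h$ is where I would anticipate the analysis to be most technically demanding.
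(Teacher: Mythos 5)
Your overall route is the same as the paper's: build $\tilde{a}_h(q)$ and $\tilde{b}_h(q)$ by marking a cell with a prescribed arm/leg split summing to $h-1$, factor out the infinite product $(-q;q)_\infty$, extract asymptotics by a circle-method argument, and reduce $\gamma_h>1$ to an inequality between two explicit constants. However, there is a genuine error at the analytic core of your plan: you assert that the correction factors are ``analytic and nonzero at $x=1$'' and that $\gamma_h$ is computed from their \emph{values} at $x=1$. This is false, and it cannot be patched by a smarter proof of the same claim. The correction factors have simple poles at $q=1$: for instance the paper's Theorem \ref{th:rat:ab} gives $\tilde{A}_1(q)=\frac{q}{1-q^2}$ and $\tilde{B}_2(q)=\frac{q^2}{1-q^2}$, and for odd $h$ the distinct-parts factor is not even rational, containing the term $\sum_{k\geq 1}\frac{q^k}{1+q^k}$, which itself diverges like $\log(2)/z$ as $q=e^{-z}\to 1$. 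There is a conceptual reason the pole is unavoidable: the average number of hooks of length $h$ in a partition of $n$ grows like a constant times $\sqrt{n}$ (Corollary \ref{Averages}), so $a_h(n)/p_{\mathcal O}(n)\to\infty$; if your correction factor were analytic and nonzero at $q=1$, then $a_h(n)$ would be comparable to $p_{\mathcal O}(n)$, which is wrong. (Your own display $a_h(n)\sim C_h^{\mathcal O}\,n\,p_{\mathcal O}(n)\cdot(\text{lower order})$ is inconsistent with the analyticity claim, and also off: the true scale is $\sqrt{n}\,p_{\mathcal O}(n)$, i.e., $a_h(n)\asymp n^{-1/4}e^{\pi\sqrt{n/3}}$.)

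The missing idea is that the decisive invariants are not values but the coefficients of these simple poles: one must show $\tilde{a}_h(e^{-z})\sim \frac{\alpha_h}{z}(-q;q)_\infty$ and $\tilde{b}_h(e^{-z})\sim \frac{\beta_h}{z}(-q;q)_\infty$ as $z\to 0$ in a cone, which the paper does by an Euler--Maclaurin analysis of the sums over the coarm parameter (Propositions \ref{F_abc Asymptotics} and \ref{G_ab Asymptotics}), yielding $\alpha_h,\beta_h$ as explicit finite sums of elementary integrals; Wright's circle method (which also needs minor-arc bounds your sketch omits) then gives Theorems \ref{Alpha Asymptotic Theorem} and \ref{Beta Asymptotic Theorem}, and $\gamma_h=\alpha_h/\beta_h$. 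Your final step is also contingent on having these correct constants: the inequality $\gamma_h>1$ is not a soft monotonicity statement about ``rational factors evaluated at $1$'' but the concrete claim $\alpha_h>\beta_h$, which the paper proves by reducing $\alpha_h$ to harmonic-number differences (increasing in $h$, with limit $\log 2$) and $\beta_h$ to dyadic sums bounded by $\log(2)-\tfrac18$ (with limit $\tfrac{\log 3}{2}$), anchored by the known values at $h=2,3$. So while your architecture matches the paper, the step that would actually produce $\gamma_h$ — identifying and computing the pole coefficients rather than function values — is absent, and as written the recipe for $\gamma_h$ returns $\infty/\infty$.
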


\begin{remark}
   In \cite{BBCFW}, the exact values of $N_2$ and $N_3$ are computed. In our case, although Theorem \ref{Main Theorem} establishes the existence of $N_h$, our proof is not effective. It would be possible to make the proof effective following the basic outline of \cite{C} if the main results of Section \ref{Asymptotics} could be made effective. Such an effective proof would not, however, give an optimal value for $N_h$. Based on computational data from \cite{BBCFW}, $N_h$ appears to grow approximately as $0.6 h^2$.
\end{remark}

In fact, the theorems we prove give much more detail about the behavior of $a_h(n)$ and $b_h(n)$, which we now summarize. In order to prove that $a_h(n)/b_h(n) \to \gamma_h$, we prove separate asymptotic formulas for $a_h(n)$ and $b_h(n)$, which we state below.

\begin{theorem} \label{Alpha Asymptotic Theorem}
    Let $h \geq 1$ be an integer. Then there exists a constant $\alpha_h \in \Q$ such that \begin{align*}
        a_h(n) \sim \alpha_h\dfrac{3^{1/4}}{2\pi n^{1/4}} e^{\pi \sqrt{n/3}}
    \end{align*}
    as $n \to \infty$.
\end{theorem}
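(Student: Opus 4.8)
The plan is to extract the asymptotics of $a_h(n)$ from the behavior of its generating function $A_h(x) := \sum_{n\ge 0} a_h(n)\, x^n$ as $x \to 1^-$, via a Tauberian argument of Ingham type. The starting point is that the generating function for partitions into odd parts, $P_{\mathcal O}(x) := \prod_{k\ge1}(1-x^{2k-1})^{-1} = \sum_n p_{\mathcal O}(n)x^n$, satisfies $P_{\mathcal O}(e^{-\epsilon}) \sim 2^{-1/2}\exp(\pi^2/(12\epsilon))$ as $\epsilon \to 0^+$; this single-term Euler--Maclaurin estimate is precisely what produces the exponential $e^{\pi\sqrt{n/3}}$ and the power $n^{-3/4}$ in the classical asymptotic for $p_{\mathcal O}(n)$. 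Since $a_h(n)$ should grow like $n^{1/2}p_{\mathcal O}(n)$ (a typical odd partition of $n$ has $\asymp \sqrt n$ rows, each contributing boundedly many $h$-hooks), I expect $A_h(x)$ to carry one extra factor of $(1-x)^{-1}$ at the dominant singularity. Concretely, the theorem reduces to the claim that, as $x\to1^-$, one has $A_h(x) = P_{\mathcal O}(x)\cdot R_h(x)$ with $R_h(x) \sim \alpha_h/(1-x)$ for an explicit constant $\alpha_h\in\mathbb Q$.

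The heart of the argument, and the step I expect to be the main obstacle, is producing $A_h(x)$ in closed form and pinning down this simple pole. I would first record the standard boundary/abacus description of hooks: writing a partition's profile as a bi-infinite word, the number of cells of $\lambda$ of hook length $h$ equals the number of ``east step at position $i$, north step at position $i+h$'' pairs, equivalently the number of beads $b$ in the Maya diagram with the slot $b-h$ empty. The difficulty is that the restriction to odd parts is \emph{not} local in these hook coordinates, so a direct transfer-matrix count is awkward. Instead I would decompose $\lambda\in\mathcal O$ according to its distinct (odd) part values $\mu_1>\cdots>\mu_r$ with multiplicities $f_1,\dots,f_r$, and count cells of hook length $h$ block by block: a cell in the block of rows of value $\mu_s$, in column $c$, has hook length $(\mu_s-c)+(f_s-b)+\big(\sum_{s'>s,\,\mu_{s'}\ge c}f_{s'}\big)+1$, which determines the row index $b$ inside the block uniquely. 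Summing these contributions over all admissible configurations, with each block weighted by the appropriate power of $x$ and the remaining parts ranging over an arbitrary odd partition, should express $A_h(x)$ as $P_{\mathcal O}(x)$ times a finite sum of rational functions in $x$ supported on cyclotomic denominators. The crux is then to show that this rational correction $R_h(x)$ has a pole of order exactly one at $x=1$ and to evaluate $\alpha_h = \lim_{x\to1}(1-x)R_h(x)$, which is manifestly rational once the closed form is in hand; establishing the exact cancellations that leave only a simple (rather than higher-order) pole, uniformly in $h$, is where the real work lies. As a sanity check, the analogous computation for \emph{all} partitions with $h=1$ gives $R_1(x) = x/(1-x)$, a simple pole as predicted.

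Granting this, the asymptotic transfer is routine. I would verify the hypotheses of Ingham's Tauberian theorem: non-negativity of $a_h(n)$ is clear, and the required monotonicity can be arranged directly or circumvented by running the saddle-point (circle-method) argument instead, using that $A_h(e^{-\epsilon+i\theta})$ is exponentially smaller than $A_h(e^{-\epsilon})$ for $\theta$ bounded away from $0$. Feeding in the singular expansion $A_h(e^{-\epsilon}) \sim (\alpha_h/\sqrt2)\,\epsilon^{-1}\exp(\pi^2/(12\epsilon))$ --- that is, amplitude $C = \alpha_h/\sqrt2$, exponential constant $A = \pi^2/12$, and pole exponent $s=1$ --- the standard saddle-point evaluation $a_h(n) \sim \tfrac{C}{2\sqrt\pi}\,A^{1/4 - s/2}\, n^{s/2 - 3/4}\, e^{2\sqrt{An}}$ gives exactly $a_h(n) \sim \alpha_h\,\dfrac{3^{1/4}}{2\pi n^{1/4}}\,e^{\pi\sqrt{n/3}}$, as claimed. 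The same mechanism will later yield the companion asymptotic for $b_h(n)$ over distinct parts, where the analogous constant is expected to be irrational (involving $\log 2$), so that the limiting ratio $\gamma_h$ of Conjecture \ref{BBCFW Conjecture} is genuinely nontrivial even though $\alpha_h$ itself is rational.
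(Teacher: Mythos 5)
Your overall architecture matches the paper's: write $\tilde{a}_h(q) = (-q;q)_\infty \cdot R_h(q)$ with $R_h$ rational, show $R_h(x) \sim \alpha_h/(1-x)$ at the dominant singularity with $\alpha_h \in \Q$, and transfer via a circle-method/saddle-point argument; your final arithmetic (amplitude $\alpha_h/\sqrt{2}$, exponential constant $\pi^2/12$, pole exponent $s=1$, yielding $\alpha_h \frac{3^{1/4}}{2\pi n^{1/4}}e^{\pi\sqrt{n/3}}$) is correct. But as a proof the proposal has a genuine gap, and you say so yourself: the two claims you label ``the main obstacle'' and ``where the real work lies'' --- (a) that your block-by-block decomposition actually produces $A_h(x) = P_{\mathcal O}(x)R_h(x)$ with $R_h \in \Q(x)$, and (b) that $R_h$ has a pole of order exactly one at $x=1$ --- are precisely the content of the paper's Sections \ref{Generating Functions} and \ref{Asymptotics}, and you establish neither. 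Moreover, your decomposition does not obviously decouple: the hook-length constraint involves $\sum_{s'>s,\,\mu_{s'}\ge c} f_{s'}$, so the parts below the block are coupled to the column index $c$ through the window $[c,\mu_s)$, and the parts above the block form an odd partition with all parts exceeding $\mu_s$, whose generating function is a \emph{tail} of the infinite product rather than the full product. Summing over $\mu_s$ therefore produces series of the shape $\sum_{m\ge0}(q^{2m+1};q^2)_j\,q^{lm}$, not bare rational functions, and converting these into rational functions (or extracting their $1/z$ behavior) is the real work, not an afterthought.

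The paper fills exactly this gap in two steps you would still need. First, it classifies cells of hook length $h$ by their (arm, leg, coarm) triple and factors the generating function over four regions of the Ferrers diagram (Theorem~\ref{th:gf:ab}), arriving at the series just described. Second, it either collapses the $m$-sums via $q$-binomial expansions and geometric series to prove rationality (Theorem~\ref{th:rat:ab}), or, for the asymptotics, applies Euler--Maclaurin summation with a two-variable device to get $\tilde{a}_h(e^{-z})/(-e^{-z};e^{-z})_\infty \sim \alpha_h/z$, with $\alpha_h$ an explicit finite sum of beta-type integrals later evaluated as rational numbers. Your worry about higher-order poles ``uniformly in $h$'' evaporates in that framework, since each summand visibly contributes only a simple pole through a single factor $1/(1-q^{2h+2k-4j})$; but one has to get to that form first. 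A smaller shared caveat on the transfer: since $R_h$ has poles at roots of unity other than $1$ (already $\tilde{A}_1(q)=q/(1-q^2)$ has one at $q=-1$), the minor-arc bound you can prove is polynomial in $\mathrm{Re}(z)^{-1}$ rather than $|z|^{-1}$, and must be absorbed by the exponential decay of $(-q;q)_\infty$ off the major arc; calling the transfer ``routine'' is fair only once this is said. In short: correct plan, correct endgame, but the heart of the proof is missing.
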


\begin{theorem} \label{Beta Asymptotic Theorem}
    Let $h \geq 1$ be an integer. Then there exists a constant $\beta_h \in \Q\lp \log(2) \rp$ such that \begin{align*}
        b_h(n) \sim \beta_h \dfrac{3^{1/4}}{2\pi n^{1/4}} e^{\pi \sqrt{n/3}}
    \end{align*}
    as $n \to \infty$. Furthermore, $\beta_h \in \Q$ if and only if $h$ is even.
\end{theorem}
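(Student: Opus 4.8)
\emph{Strategy.} The plan is to produce an exact generating function for $b_h(n)$ and then transfer its behavior near $q=1$ to an asymptotic for the coefficients. The starting point is a structural fact special to distinct partitions: if $\lambda$ has distinct parts, then every cell of its Ferrers diagram has leg length at most its arm length. Indeed, if a cell lies in row $r$ and column $s$ with arm $a$, then $s=\lambda_r-a$, and for $i>r$ distinctness forces $\lambda_i\le\lambda_r-(i-r)$; hence $\lambda_i\ge s$ can hold only when $i-r\le a$, so at most $a$ rows contribute to the leg. Consequently a cell of hook length $h$ has arm $a$ and leg $l$ with $a+l=h-1$ and $0\le l\le a$. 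I would then decompose a distinct partition carrying a marked cell of type $(a,l,s)$ (arm $a$, leg $l$, column $s$) into four independent pieces: the parts above row $r$ (a distinct partition with all parts $>s+a$), the marked row (of size $s+a$), the $l$ leg parts (an $l$-element subset of $\{s,\dots,s+a-1\}$), and the parts below (a distinct partition with all parts $<s$). This is a bijection, and folding the first and last products into $(-q;q)_\infty=\prod_{k\ge1}\lp1+q^k\rp$ yields
\begin{align*}
B_h(q):=\sum_{n\ge0}b_h(n)q^n=(-q;q)_\infty\sum_{\substack{a+l=h-1\\0\le l\le a}}\sum_{s\ge1}q^{s+a}\,\frac{e_l\lp q^s,\dots,q^{s+a-1}\rp}{\prod_{k=s}^{s+a}\lp1+q^k\rp},
\end{align*}
where $e_l$ is the $l$th elementary symmetric polynomial. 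Write $B_h(q)=(-q;q)_\infty R_h(q)$ for the bracketed sum $R_h$.

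\emph{Behavior of $R_h$ near $q=1$.} Next I would analyze $R_h$ as $q=e^{-t}\to1^-$. Fixing $(a,l)$ and substituting $x=ts$, each summand converges as $t\to0$ to $\binom{a}{l}e^{-(l+1)x}/(1+e^{-x})^{a+1}$, so the sum over $s$ is a Riemann sum of spacing $t$, giving
\begin{align*}
R_h\lp e^{-t}\rp\sim\frac{C_h}{t},\qquad C_h:=\sum_{\substack{a+l=h-1\\0\le l\le a}}\binom{a}{l}\int_0^1\frac{u^l}{(1+u)^{a+1}}\,du,
\end{align*}
after the change of variables $u=e^{-x}$. Expanding $u^l=((1+u)-1)^l$ shows each integral is a rational combination of the values $\int_0^1(1+u)^m\,du$, which are rational unless $m=-1$, where the value is $\log2$. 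The exponent $m=-1$ arises precisely from the term with $a=l$, and $a=l$ with $a+l=h-1$ is possible exactly when $h$ is odd, where it forces $a=l=(h-1)/2$ and contributes $\log2$ with coefficient $1$. Hence $C_h\in\Q(\log2)$, and since $\log2\notin\Q$, we get $C_h\in\Q$ if and only if $h$ is even.

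\emph{Transfer to coefficients.} Finally I would pass to the coefficients. Because $\log(-q;q)_\infty\sim\pi^2/(12t)$ with constant term $-\tfrac12\log2$ and no $\log t$ term, the factor $R_h(e^{-t})\sim C_h/t$ inserts exactly one extra power of $t^{-1}$ at the saddle $t_*=\pi/(2\sqrt{3n})$, multiplying the classical asymptotic $p_{\mathcal D}(n)\sim\frac{1}{4\cdot3^{1/4}n^{3/4}}e^{\pi\sqrt{n/3}}$ by $C_h/t_*$. Carrying this out rigorously via the asymptotic machinery of Section~\ref{Asymptotics} gives
\begin{align*}
b_h(n)\sim C_h\,\frac{3^{1/4}}{2\pi n^{1/4}}\,e^{\pi\sqrt{n/3}},
\end{align*}
so that $\beta_h=C_h$, which lies in $\Q(\log2)$ and is rational if and only if $h$ is even. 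As a consistency check one finds $\beta_1=\log2$, $\beta_2=\tfrac12$, and $\beta_3=\log2-\tfrac18$, matching the recorded values of $\gamma_1,\gamma_2,\gamma_3$.

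\emph{Main obstacle.} The technical heart is the rigorous coefficient transfer. The radial estimate $B_h(e^{-t})\sim C_h\,2^{-1/2}\,t^{-1}e^{\pi^2/(12t)}$ is not by itself sufficient: one must control $R_h(q)$ (and the error in the Riemann-sum approximation) uniformly on the major arc near $q=1$ and bound $B_h$ on the minor arcs, or else invoke a Tauberian theorem valid for the a priori non-monotone sequence $b_h(n)\ge0$. Propagating the simple $C_h/t$ growth of $R_h$ through the saddle-point analysis with the correct constant is precisely what the results of Section~\ref{Asymptotics} are designed to supply; granting that, the identification $\beta_h=C_h$ and the parity dichotomy reduce to the elementary integral evaluations above.
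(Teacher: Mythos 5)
Your combinatorics and your constant are both correct and in fact identical to the paper's. The four-piece decomposition (parts above, marked row, leg subset of $\{s,\dots,s+a-1\}$, parts below) is the paper's region decomposition in Theorem \ref{th:gf:ab}: using $e_l\lp q^s,\dots,q^{s+a-1}\rp = q^{ls+l(l-1)/2}\binom{a}{l}_q$, your summand agrees term by term with $F_{\mathcal D}(j,l,m;q)$ under $j=a$, $m=s-1$. Likewise your $C_h$ equals the paper's $\beta_h=\sum_{j}\binom{h-j-1}{j}I_B(j+1,h-j)$ after the substitution $u=e^{-x}$, and your $\log(2)$ dichotomy via the integrals $\int_0^1(1+u)^{m}\,du$ is the same computation as Lemma \ref{Beta Integrals Lemma}. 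Your numerical checks $\beta_1=\log(2)$, $\beta_2=\tfrac12$, $\beta_3=\log(2)-\tfrac18$ are consistent with the paper.

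The genuine gap is the coefficient transfer, and you cannot close it by ``granting the machinery of Section \ref{Asymptotics}'': apart from the externally cited Proposition \ref{WrightCircleMethod}, that machinery \emph{is} the proof of this theorem, so invoking it wholesale is circular. What must actually be verified for your $R_h$, and what your proposal cannot supply, are the two hypotheses (H1) and (H3). First, (H1) demands $R_h(e^{-z})\sim C_h/z$ as $z\to 0$ in a complex cone $D_\theta$, not merely for real $z=t$. Your Riemann-sum argument is intrinsically a real-variable argument; moreover your summand depends on $z$ separately from the product $sz$ (through the shifted factors $q^{s+i}$ and $1+q^k$), so even on the real axis the interchange of limit and summation needs justification. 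The paper resolves both issues simultaneously with the two-variable Euler--Maclaurin device of Proposition \ref{EM Formula} and Proposition \ref{G_ab Asymptotics}, following \cite{BCM}; nothing in your outline replaces this. Second, (H3) demands $\left| R_h(e^{-z}) \right|\ll|z|^{-C}$ for $z$ outside the cone. Termwise estimation of your double sum yields only powers of $1/\mathrm{Re}(z)$, which is not a bound of the required shape, since off the cone $\mathrm{Re}(z)$ can be far smaller than $|z|$. The paper obtains (H3) from the structural result Theorem \ref{th:rat:ab}: $\tilde b_h(q)/(-q;q)_\infty$ is a rational function of $q$ (plus $\sum_k q^k/(1+q^k)$ when $h$ is odd), so its singularities as $z\to 0$ are polar and of size $O\lp |z|^{-C}\rp$. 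Your proposal contains no analogue of this rationality statement, hence no route to the minor-arc bound. In short: the generating function, the identification $C_h=\beta_h$, and the parity dichotomy are all right, but both analytic inputs to Wright's circle method --- cone-valid major-arc asymptotics and polynomial minor-arc bounds --- are missing, and the Riemann-sum substitute you propose can produce neither.
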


A key ingredient to these asymptotic formulas is the construction of the generating functions for $a_h(n)$ and $b_h(n)$, which previously had not been known apart from the cases $h=2$ and $h=3$. We make use of two representations of these generating functions, found in Theorems \ref{th:gf:ab} and \ref{th:rat:ab}. In particular, we show in Theorem \ref{th:rat:ab} that these generating functions are essentially rational functions of $q$ multiplied by the generating function for partitions into odd parts, which is a modular form.

From Theorems \ref{Alpha Asymptotic Theorem} and \ref{Beta Asymptotic Theorem}, it is immediately clear that $a_h(n)/b_h(n) \to \alpha_h/\beta_h$ as $h \to \infty$, so this establishes the existence of $\gamma_h$. By careful evaluations of the constants $\alpha_h$ and $\beta_h$, we are able to prove the following result for $\gamma_h$.

\begin{theorem} \label{Inequality Theorem}
    For each $h \geq 2$, we have $\gamma_h > 1$. Furthermore, we have $$\lim_{h\to\infty}\gamma_h=\dfrac{\log(4)}{\log(3)}.$$
\end{theorem}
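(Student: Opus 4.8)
The plan is to prove Theorem \ref{Inequality Theorem} by leveraging the asymptotic formulas of Theorems \ref{Alpha Asymptotic Theorem} and \ref{Beta Asymptotic Theorem}, which give $\gamma_h = \alpha_h / \beta_h$. Since both asymptotics share the identical growth factor $\frac{3^{1/4}}{2\pi n^{1/4}} e^{\pi \sqrt{n/3}}$, everything reduces to understanding the rational (respectively $\Q(\log 2)$-valued) constants $\alpha_h$ and $\beta_h$. The first step is therefore to obtain closed-form expressions, or at least sufficiently explicit descriptions, of $\alpha_h$ and $\beta_h$ as functions of $h$; I expect these to arise from evaluating the generating functions of Theorem \ref{th:rat:ab} at the dominant singularity $q = 1$ (more precisely, extracting the leading term of the relevant rational prefactor near the cusp governing the $e^{\pi\sqrt{n/3}}$ growth). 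The key structural input is that both generating functions are an explicit rational function of $q$ times the modular generating function for partitions into odd parts, so the constants $\alpha_h, \beta_h$ are read off from the behavior of those rational prefactors at the boundary.

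Once $\alpha_h$ and $\beta_h$ are pinned down explicitly, I would split the proof into two parts matching the two assertions. For the limiting value, I would compute $\lim_{h \to \infty} \alpha_h$ and $\lim_{h \to \infty} \beta_h$ separately. The target $\frac{\log 4}{\log 3} = \frac{2\log 2}{\log 3}$ strongly suggests that as $h \to \infty$ the constant $\beta_h$ carries a $\log 2$ (consistent with Theorem \ref{Beta Asymptotic Theorem}, where $\beta_h \notin \Q$ precisely for odd $h$), while $\alpha_h \to c$ for some rational-flavored constant, so that the ratio tends to a quotient of logarithms. Concretely, I would establish
\begin{align*}
    \lim_{h\to\infty} \alpha_h = A, \qquad \lim_{h\to\infty} \beta_h = B,
\end{align*}
with $A/B = \log(4)/\log(3)$, by analyzing the $h \to \infty$ asymptotics of the two prefactors. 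This handles the "furthermore" clause.

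For the inequality $\gamma_h > 1$ for every fixed $h \geq 2$, the limit alone is insufficient since it only controls large $h$; I must show $\alpha_h > \beta_h$ for all $h \geq 2$. The natural approach is to compare $\alpha_h$ and $\beta_h$ termwise using their explicit formulas, reducing to an elementary inequality between two sequences of rationals (and a $\log 2$ in the odd case). I would verify the small cases (e.g. $h = 2, 3$, where the values $\gamma_2 = \tfrac32$ and $\gamma_3 = \tfrac{2}{3(\log 2 - 1/8)}$ are already known from \cite{BBCFW} and can serve as a consistency check) by direct computation, and then prove the inequality for all larger $h$ monotonically or via a clean uniform bound.

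The main obstacle I anticipate is the first step: extracting clean, closed-form expressions for $\alpha_h$ and $\beta_h$ from the generating functions of Theorem \ref{th:rat:ab}, and in particular controlling the rational prefactor's value at the relevant cusp uniformly in $h$ so that the $h \to \infty$ limits can be evaluated exactly. The presence of $\log 2$ in $\beta_h$ for odd $h$ means the two parities may require slightly different handling, and ensuring the inequality $\alpha_h > \beta_h$ holds across both parities and for all $h \geq 2$ — not merely asymptotically — is the delicate part, since the limit $\log(4)/\log(3) \approx 1.26$ leaves only modest room and one must rule out any dip below $1$ for intermediate values of $h$.
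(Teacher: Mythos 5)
Your reduction of the theorem to two statements --- that $\gamma_h = \alpha_h/\beta_h$ has a limit computed from separate limits of $\alpha_h$ and $\beta_h$, and that $\alpha_h > \beta_h$ for every $h \geq 2$ --- is exactly the paper's skeleton (stated there as Theorem \ref{Limit Values and Inequality}), and your $h=2,3$ consistency checks are correct. But there is a genuine gap: everything that constitutes the actual proof is deferred. You never produce $\alpha_h$ and $\beta_h$ in a form that can be analyzed, never identify the individual limits (they are $\alpha_h \to \log 2$ and $\beta_h \to \tfrac12\log 3$, whose ratio is $\log(4)/\log(3)$), and never give a mechanism for the all-$h$ inequality; you yourself flag these as the ``main obstacle.'' Moreover, the source you propose for the constants --- the rational prefactors $\tilde A_h(q)$, $\tilde B_h(q)$ of Theorem \ref{th:rat:ab} evaluated at the singularity $q=1$ --- is not what makes the problem tractable: those rational functions admit no usable closed form as $h$ varies (compare $\tilde A_3$ with $\tilde A_2$), so controlling their leading Laurent coefficients at $q=1$ uniformly in $h$, and then letting $h \to \infty$, is precisely the difficulty you would still have to solve.

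What the paper does instead, and what your plan is missing, is an integral representation uniform in $h$: Euler--Maclaurin summation applied to the representations \eqref{ah_for_asymptotic} and \eqref{bh_for_asymptotic} (coming from Theorem \ref{th:gf:ab}, not Theorem \ref{th:rat:ab}) expresses $\alpha_h$ and $\beta_h$ as binomial-weighted sums of the elementary integrals $I_A(j,l)=\int_0^\infty (1-e^{-2x})^j e^{-lx}\,dx$ and $I_B(j,k)=\int_0^\infty e^{-jx}(1+e^{-x})^{-k}\,dx$. Evaluating these sums (Lemmas \ref{Alpha Integrals Lemma} through \ref{Summation identity}) collapses $\alpha_h$ to a difference of harmonic numbers, $\alpha_h = \tfrac12\bigl(H_h - H_{\lceil (h-1)/2\rceil} + H_{h-1} - H_{\lceil (h-2)/2\rceil}\bigr)$, whence $\alpha_h \to \log 2$ and $\alpha_h$ is increasing in $h$; and collapses $\beta_h$ to explicit dyadic series (Theorem \ref{Beta Sum Formula}), whence $\beta_h \to \tfrac12\log 3$, with $\beta_{2n} < \tfrac12\log 3$ and $\beta_{2n+1}$ strictly decreasing from $\beta_3 = \log 2 - \tfrac18$. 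The inequality for all $h \geq 2$ then follows from this monotonicity together with the base cases $\alpha_2 = \tfrac34 > \tfrac12 = \beta_2$ and $\alpha_3 = \tfrac23 > \log 2 - \tfrac18 = \beta_3$: for $h \geq 4$ one has $\alpha_h > \alpha_3 > \beta_3 \geq \beta_h$. Note also that the limit alone could never settle the inequality, since $\log(4)/\log(3) \approx 1.26$ leaves room for dips below $1$ at finite $h$; some explicit closed form with monotonicity (or a uniform gap) is indispensable, and your outline provides no substitute for it.
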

\noindent Note that $\log(4)/\log(3)\approx1.2618...$.

It is now clear that to prove Theorem \ref{Main Theorem}, it will suffice to prove Theorems \ref{Alpha Asymptotic Theorem}, \ref{Beta Asymptotic Theorem}, and \ref{Inequality Theorem}. Therefore, the rest of the paper is dedicated to proving these central results. In Section \ref{Generating Functions}, we construct the generating functions for the sequences $a_h(n)$ and $b_h(n)$. We recall some known techniques in Section \ref{Preliminaries} which we will use to prove our theorems. In Section \ref{Asymptotics}, we prove Theorems \ref{Alpha Asymptotic Theorem} and \ref{Beta Asymptotic Theorem} using the circle method. In Section \ref{Constants} we then closely study the constants $\alpha_h$ and $\beta_h$ in order to prove Theorem \ref{Inequality Theorem}. We close with further conjectures and interesting probabilistic corollaries of our results in Section \ref{Discussion}; for example, we will show that for $n \gg 0$, most rows in partitions into distinct parts contain a hook of any given length $h \geq 1$.

\section*{Acknowledgements}

The authors wish to thank the anonymous referees for their helpful suggestions. We are also grateful for Koustav Banerjee's contribution of the simpler proof mentioned in Remark \ref{remark5}. The first author has received funding from the European Research Council (ERC) under the European Union’s Horizon 2020 research and innovation programme (grant agreement No. 101001179). The views expressed in this article are those of the author and do not reflect the official policy or position of the U.S. Naval Academy, Department of the Navy, the Department of Defense, or the U.S. Government.

\section{Generating Functions} \label{Generating Functions}

Recall the usual notation of the $q$-ascending
factorial 
\begin{align*}
(x;q)_n:=\begin{cases}
1,&\mbox{if }n=0;\\
\displaystyle\prod_{j=0}^{n-1}\left(1-xq^j\right),&\mbox{if }n\in\mathbb{N};
\end{cases}\qquad
(x;q)_\infty:=\lim_{n\rightarrow \infty} (x;q)_n.
\end{align*}
For $0\le k\le n$, 
let 
$$\displaystyle \binom{n}{k}_q:={(q;q)_n\over (q;q)_k\,(q;q)_{n-k}}$$ 
be the usual
$q$-binomial coefficient.
In this section, we establish the following explicit generating functions
for $a_h(n)$ and $b_h(n)$ by using a method described in \cite{BH}.
Let
\begin{equation*}
{\tilde{a}}_h(q) = \sum_{n=0}^\infty a_h(n)q^n
\text{\quad and \quad}
{\tilde{b}}_h(q) = \sum_{n=0}^\infty b_h(n)q^n.
\end{equation*}

\begin{theorem}\label{th:gf:ab}
For each $h\geq 1$, we have
\begin{align*}
{\tilde{a}}_h(q) &=(-q;q)_\infty
\sum_{j=0}^{\lceil h/2\rceil -1}
q^h
\binom{ h-j-1}{j}_{q^2} 
	\sum_{m\geq 0} (q^{2m+1};q^2)_j q^{(2h-4j)m}\\
& \qquad + (-q;q)_\infty
\sum_{j=0}^{\lfloor h/2\rfloor-1}
	q^{3h-4j-3 }
\binom{ h-j-2}{j}_{q^2} 
	\sum_{m\geq 0}
	 (q^{2m+3}; q^2)_j
	q^{(2h -4j   - 2)m   };\\
{\tilde{b}}_h(q) &=(-q;q)_\infty
\sum_{j=0}^{\lceil h/2\rceil -1}
  q^{h + j(j-1)/2} \binom{ h-j-1}{j}_q \sum_{m\geq 0}
{ q^{(j+1)m}\over (-q^{m+1};q)_{h-j}}.
\end{align*}

\end{theorem}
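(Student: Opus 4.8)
The plan is to count, for each fixed $h$, the occurrences of a hook of length $h$ by the cell that realizes it, and to decompose the ambient partition into independent pieces around that cell, in the spirit of the profile/boundary method of \cite{BH}. The key geometric observation I would use is that a cell in row $i$ and column $c$ carries a hook of length $h$ if and only if its arm $a=\lambda_i-c$ and its leg $j$ (the number of rows below $i$ whose part still reaches column $c$) satisfy $a+j+1=h$; equivalently, writing $A:=\lambda_i$ for the hand value, we have $c=A-(h-1-j)$, the rows $i,i+1,\dots,i+j$ are exactly those rows with part in the value-window $[c,A]$, and the next row has part $<c$. I would therefore organize both counts by the leg length $j$ and the location of this window.

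Consider first $\tilde b_h(q)$, the distinct case. The window $[c,A]=[c,c+h-1-j]$ has width $h-j$, its top value $A$ is forced to be the hand, and the $j+1$ rows of the hook carry distinct values in this window. Distinctness forces the drop $A-\lambda_{i+j}$ to lie between $j$ and the arm $h-1-j$, which yields $j\le\lceil h/2\rceil-1$ and hence the summation range. Writing $c=m+1$, the parts below the window ($\le m$) and above it ($\ge m+h-j+1$) are arbitrary distinct partitions, contributing $(-q;q)_m\,(-q^{\,m+h-j+1};q)_\infty=(-q;q)_\infty/(-q^{\,m+1};q)_{h-j}$. Inside the window one must include $A$ and choose the remaining $j$ parts among the $h-j-1$ values $m+1,\dots,m+h-j-1$, which is enumerated by $\binom{h-j-1}{j}_q$, while the hand and the minimal staircase of the chosen parts contribute $q^{(j+1)m+h+j(j-1)/2}$ (note $-j+j(j+1)/2=j(j-1)/2$). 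Summing over $j$ and $m$ assembles precisely the stated expression for $\tilde b_h(q)$; the substantive work here is tracking $|\lambda|$ through the decomposition so the exponents land exactly.

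For $\tilde a_h(q)$ the same decomposition applies, but two features change. Since all parts are odd, admissible values differ by $2$, so the window and its internal gaps are measured in base $q^2$ and the selection factors become the $q^2$-binomials $\binom{\cdot}{j}_{q^2}$. Moreover, parts may repeat, so the leg of the hook is produced partly by repeated rows of equal value and partly by descending through smaller values; summing over the multiplicities of the window values is the source of the inner sums $\sum_{m\ge0}$ and of the finite products $(q^{2m+1};q^2)_j$ and $(q^{2m+3};q^2)_j$. The two separate sums reflect a structural dichotomy in how the leg and arm of the hook meet the block boundaries: for $h=2$ these are exactly the vertical dominoes (counted by $(-q;q)_\infty\,q^2/(1-q^4)=(-q;q)_\infty\sum_{v\ \mathrm{odd}}q^{2v}$, the ``multiplicity $\ge2$'' contribution) and the horizontal dominoes (counted by $(-q;q)_\infty\,q^3/(1-q^2)=(-q;q)_\infty\sum_{v\ge3\ \mathrm{odd}}q^{v}$). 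Carrying out the two cases in general produces the two families of terms, with their respective binomials $\binom{h-j-1}{j}_{q^2}$ and $\binom{h-j-2}{j}_{q^2}$ and summation ranges $\lceil h/2\rceil-1$ and $\lfloor h/2\rfloor-1$.

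I expect the odd case to be the main obstacle: isolating the correct dichotomy and verifying that the multiplicity contributions sum to precisely the stated $q^2$-series and $q$-powers requires genuinely careful size bookkeeping, whereas the distinct case is essentially forced once the window decomposition is in place. As an internal check I would first confirm both formulas at $h=1$, where each reduces to the generating function $(-q;q)_\infty\sum_{k\ge1}q^k/(1+q^k)$ and $(-q;q)_\infty\,q/(1-q^2)$ for the number of distinct part sizes, and at $h=2$ and $h=3$, matching the data already recorded in \cite{BBCFW}.
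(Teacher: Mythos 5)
Your distinct-parts derivation is correct and is essentially the paper's own proof: it is the cell-marking decomposition of \cite{BH}, with your value-window $[m+1,\,m+h-j]$ playing the role of the paper's four regions. Your complementary-factor identity $(-q;q)_m\,(-q^{m+h-j+1};q)_\infty=(-q;q)_\infty/(-q^{m+1};q)_{h-j}$, the selection factor $\binom{h-j-1}{j}_q$, the exponent $(j+1)m+h+j(j-1)/2$, and the range $j\le\lceil h/2\rceil-1$ all agree exactly with the paper's computation.

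The odd-parts case, however, contains a genuine gap, and it is the half of the theorem you yourself concede is the main obstacle. Two concrete problems. First, you never identify the dichotomy that produces the two sums: it is the parity of the arm, equivalently of the coarm, forced because the part occupied by the marked cell's row equals coarm plus arm plus one and must be odd. The first sum is the case of a cell in an odd column $2m+1$ with arm $2j$; the second is a cell in an even column $2m+2$ with arm $2j+1$ (in which case each leg part exceeds the column by an odd amount, producing the extra power $q^{l}$ hidden in $q^{3h-4j-3}$). Your $h=2$ domino example is consistent with this, but ``how the leg and arm meet the block boundaries'' is not a case split one can carry out in general. Second, your stated mechanism --- that the inner sums $\sum_{m\ge0}$ and the products $(q^{2m+1};q^2)_j$, $(q^{2m+3};q^2)_j$ come from ``summing over the multiplicities of the window values'' --- is wrong. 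The index $m$ is the (half-)coarm locating the window, exactly as in your own distinct-parts argument, which is visible in the exponents $q^{(2h-4j)m}$; and the finite products have negative coefficients, so they cannot be multiplicity generating functions. In the correct bookkeeping, multiplicities are absorbed by the free regions: odd parts $<c$ below, odd parts $\geq$ the hand above (note that repeated copies of the hand may lie \emph{above} the marked row, so unlike the distinct case the window does not determine the hook rows), and the weakly decreasing leg excesses counted by the $q^2$-binomial. The products then arise purely algebraically from the factorization $(q;q^2)_\infty=(q;q^2)_{m}\,(q^{2m+1};q^2)_{j}\,(q^{2m+2j+1};q^2)_\infty$ together with Euler's identity $(-q;q)_\infty(q;q^2)_\infty=1$: they are the correction for the $j$ odd values strictly inside the window, which only leg rows are permitted to use. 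Without these two ingredients your plan does not assemble the stated formula for ${\tilde{a}}_h(q)$.
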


\begin{proof}

Each partition $\lambda$ can be represented by its Ferrers diagram.
For each box $v$ in the Ferrers diagram  of a partition $\lambda$, or
for each box $v$ in $\lambda$, for short, define the
{\it arm length} (resp. {\it leg length}, {\it coarm length},
{\it coleg length})
of~$v$, denoted by $\arm(\lambda, v)$
	(resp. $\leg(\lambda, v)$, $\coarm(\lambda, v)$, $\coleg(\lambda, v)$),
to be the number of
boxes $u$ such that $u$ lies
in the same row as $v$ and to the right of $v$
(resp. in the same column as $v$ and below $v$,
in the same row as $v$ and to the left of $v$,
in the same column as $v$ and above~$v$). See Figure 2.
\begin{figure}[H]
\begin{tikzpicture}[scale=0.800000]
\fill [white](7.8000,0.6000)--(7.8000,0.0000)--(7.2000,0.0000)--(6.6000,0.0000)--(6.6000,-0.6000)--(6.6000,-1.2000)--(6.0000,-1.2000)--(6.0000,-1.8000)--(5.4000,-1.8000)--(4.8000,-1.8000)--(4.8000,-2.4000)--(4.8000,-3.0000)--(4.2000,-3.0000)--(3.6000,-3.0000)--(3.6000,-3.6000)--(3.0000,-3.6000)--(3.0000,-4.2000)--(2.4000,-4.2000)--(1.8000,-4.2000)--(1.8000,-4.8000)--(1.2000,-4.8000)--(1.2000,-5.4000)--(1.2000,-6.0000)--(0.6000,-6.0000)--(0.6000,-6.6000)--(0.0000,-6.6000)--(0.0000,-6.0000)--(0.0000,-5.4000)--(0.0000,-4.8000)--(0.0000,-4.2000)--(0.0000,-3.6000)--(0.0000,-3.0000)--(0.0000,-2.4000)--(0.0000,-1.8000)--(0.0000,-1.2000)--(0.0000,-0.6000)--(0.0000,0.0000)--(0.0000,0.6000)--(0.6000,0.6000)--(1.2000,0.6000)--(1.8000,0.6000)--(2.4000,0.6000)--(3.0000,0.6000)--(3.6000,0.6000)--(4.2000,0.6000)--(4.8000,0.6000)--(5.4000,0.6000)--(6.0000,0.6000)--(6.6000,0.6000)--(7.2000,0.6000)--(7.8000,0.6000);
\draw [gray!10](0.0000,-6.6000)--(0.0000,0.6000);
\draw [gray!10](0.6000,-6.6000)--(0.6000,0.6000);
\draw [gray!10](1.2000,-6.0000)--(1.2000,0.6000);
\draw [gray!10](1.8000,-4.8000)--(1.8000,0.6000);
\draw [gray!10](2.4000,-4.2000)--(2.4000,0.6000);
\draw [gray!10](3.0000,-4.2000)--(3.0000,0.6000);
\draw [gray!10](3.6000,-3.6000)--(3.6000,0.6000);
\draw [gray!10](4.2000,-3.0000)--(4.2000,0.6000);
\draw [gray!10](4.8000,-3.0000)--(4.8000,0.6000);
\draw [gray!10](5.4000,-1.8000)--(5.4000,0.6000);
\draw [gray!10](6.0000,-1.8000)--(6.0000,0.6000);
\draw [gray!10](6.6000,-1.2000)--(6.6000,0.6000);
\draw [gray!10](7.2000,0.0000)--(7.2000,0.6000);
\draw [gray!10](0.0000,-6.6000)--(0.6000,-6.6000);
\draw [gray!10](0.0000,-6.0000)--(1.2000,-6.0000);
\draw [gray!10](0.0000,-5.4000)--(1.2000,-5.4000);
\draw [gray!10](0.0000,-4.8000)--(1.8000,-4.8000);
\draw [gray!10](0.0000,-4.2000)--(3.0000,-4.2000);
\draw [gray!10](0.0000,-3.6000)--(3.6000,-3.6000);
\draw [gray!10](0.0000,-3.0000)--(4.8000,-3.0000);
\draw [gray!10](0.0000,-2.4000)--(4.8000,-2.4000);
\draw [gray!10](0.0000,-1.8000)--(6.0000,-1.8000);
\draw [gray!10](0.0000,-1.2000)--(6.6000,-1.2000);
\draw [gray!10](0.0000,-0.6000)--(6.6000,-0.6000);
\draw [gray!10](0.0000,0.0000)--(7.8000,0.0000);
\draw [black](7.8000,0.6000)--(7.8000,0.0000)--(7.2000,0.0000)--(6.6000,0.0000)--(6.6000,-0.6000)--(6.6000,-1.2000)--(6.0000,-1.2000)--(6.0000,-1.8000)--(5.4000,-1.8000)--(4.8000,-1.8000)--(4.8000,-2.4000)--(4.8000,-3.0000)--(4.2000,-3.0000)--(3.6000,-3.0000)--(3.6000,-3.6000)--(3.0000,-3.6000)--(3.0000,-4.2000)--(2.4000,-4.2000)--(1.8000,-4.2000)--(1.8000,-4.8000)--(1.2000,-4.8000)--(1.2000,-5.4000)--(1.2000,-6.0000)--(0.6000,-6.0000)--(0.6000,-6.6000)--(0.0000,-6.6000)--(0.0000,-6.0000)--(0.0000,-5.4000)--(0.0000,-4.8000)--(0.0000,-4.2000)--(0.0000,-3.6000)--(0.0000,-3.0000)--(0.0000,-2.4000)--(0.0000,-1.8000)--(0.0000,-1.2000)--(0.0000,-0.6000)--(0.0000,0.0000)--(0.0000,0.6000)--(0.6000,0.6000)--(1.2000,0.6000)--(1.8000,0.6000)--(2.4000,0.6000)--(3.0000,0.6000)--(3.6000,0.6000)--(4.2000,0.6000)--(4.8000,0.6000)--(5.4000,0.6000)--(6.0000,0.6000)--(6.6000,0.6000)--(7.2000,0.6000)--(7.8000,0.6000);
\fill [gray!40, line width=0.4pt](0.0000,-2.4000)--(1.8000,-2.4000)--(1.8000,-1.8000)--(0.0000,-1.8000)--(0.0000,-2.4000);
\draw [gray!200, line width=0.4pt](0.0000,-2.4000)--(1.8000,-2.4000)--(1.8000,-1.8000)--(0.0000,-1.8000)--(0.0000,-2.4000);
\fill [gray!40, line width=0.4pt](2.4000,-2.4000)--(4.8000,-2.4000)--(4.8000,-1.8000)--(2.4000,-1.8000)--(2.4000,-2.4000);
\draw [gray!200, line width=0.4pt](2.4000,-2.4000)--(4.8000,-2.4000)--(4.8000,-1.8000)--(2.4000,-1.8000)--(2.4000,-2.4000);
\fill [gray!40, line width=0.4pt](1.8000,-4.2000)--(2.4000,-4.2000)--(2.4000,-2.4000)--(1.8000,-2.4000)--(1.8000,-4.2000);
\draw [gray!200, line width=0.4pt](1.8000,-4.2000)--(2.4000,-4.2000)--(2.4000,-2.4000)--(1.8000,-2.4000)--(1.8000,-4.2000);
\fill [gray!40, line width=0.4pt](1.8000,-1.8000)--(2.4000,-1.8000)--(2.4000,0.6000)--(1.8000,0.6000)--(1.8000,-1.8000);
\draw [gray!200, line width=0.4pt](1.8000,-1.8000)--(2.4000,-1.8000)--(2.4000,0.6000)--(1.8000,0.6000)--(1.8000,-1.8000);
\draw (2.1000, -2.1000) node [] {$v$};
\draw [black](1.8000,-2.4000)--(1.8000,-1.8000)--(2.4000,-1.8000)--(2.4000,-2.4000)--(1.8000,-2.4000);
\draw (3.6000, -2.1000) node [] {$j$};
\draw (1.0200, -2.1000) node [] {$m$};
\draw (2.1000, -0.6000) node [] {$g$};
\draw (2.1000, -3.3000) node [] {$l$};
\end{tikzpicture}
\caption{{\small Arm, leg, coarm, and coleg lengths: $\arm(\lambda, v)=j$, $\leg(\lambda, v)=l$, $\coarm(\lambda, v)=m$, $\coleg(\lambda, v)=g$}}
\end{figure}

Consider a set $\mathcal L$ of partitions.
For each given triplet $(j,l,m)$  of integers,
	let $f_{\mathcal L}(j,l,m;n)$ denote
the number of ordered pairs $(\lambda,v)$ such that
	$\lambda \in \mathcal L$, $v\in\lambda$, $\lambda \vdash n$, $\arm(\lambda, v)=j$, $\leg(\lambda, v)=l$, 
	and $\coarm(\lambda, v)=m$.
For a fixed partition $\lambda$, it is easy to see that for each box $v \in \lambda$ and $\lambda \vdash n$, the triplets
	$(\arm(\lambda, v), \leg(\lambda, v),\coarm(\lambda, v))$ are different. Now, let the triplet
$(j,l,m)$ be fixed. The generating function for those partitions is equal to the product of
several ``small" generating functions for the different regions of the partitions,
as shown in Figure 3.
\begin{figure}[H]
\begin{tikzpicture}[scale=0.800000]
\fill [white](7.8000,0.6000)--(7.8000,0.0000)--(7.2000,0.0000)--(6.6000,0.0000)--(6.6000,-0.6000)--(6.6000,-1.2000)--(6.0000,-1.2000)--(6.0000,-1.8000)--(5.4000,-1.8000)--(4.8000,-1.8000)--(4.8000,-2.4000)--(4.8000,-3.0000)--(4.2000,-3.0000)--(3.6000,-3.0000)--(3.6000,-3.6000)--(3.0000,-3.6000)--(3.0000,-4.2000)--(2.4000,-4.2000)--(1.8000,-4.2000)--(1.8000,-4.8000)--(1.2000,-4.8000)--(1.2000,-5.4000)--(1.2000,-6.0000)--(0.6000,-6.0000)--(0.6000,-6.6000)--(0.0000,-6.6000)--(0.0000,-6.0000)--(0.0000,-5.4000)--(0.0000,-4.8000)--(0.0000,-4.2000)--(0.0000,-3.6000)--(0.0000,-3.0000)--(0.0000,-2.4000)--(0.0000,-1.8000)--(0.0000,-1.2000)--(0.0000,-0.6000)--(0.0000,0.0000)--(0.0000,0.6000)--(0.6000,0.6000)--(1.2000,0.6000)--(1.8000,0.6000)--(2.4000,0.6000)--(3.0000,0.6000)--(3.6000,0.6000)--(4.2000,0.6000)--(4.8000,0.6000)--(5.4000,0.6000)--(6.0000,0.6000)--(6.6000,0.6000)--(7.2000,0.6000)--(7.8000,0.6000);
\draw [gray!10](0.0000,-6.6000)--(0.0000,0.6000);
\draw [gray!10](0.6000,-6.6000)--(0.6000,0.6000);
\draw [gray!10](1.2000,-6.0000)--(1.2000,0.6000);
\draw [gray!10](1.8000,-4.8000)--(1.8000,0.6000);
\draw [gray!10](2.4000,-4.2000)--(2.4000,0.6000);
\draw [gray!10](3.0000,-4.2000)--(3.0000,0.6000);
\draw [gray!10](3.6000,-3.6000)--(3.6000,0.6000);
\draw [gray!10](4.2000,-3.0000)--(4.2000,0.6000);
\draw [gray!10](4.8000,-3.0000)--(4.8000,0.6000);
\draw [gray!10](5.4000,-1.8000)--(5.4000,0.6000);
\draw [gray!10](6.0000,-1.8000)--(6.0000,0.6000);
\draw [gray!10](6.6000,-1.2000)--(6.6000,0.6000);
\draw [gray!10](7.2000,0.0000)--(7.2000,0.6000);
\draw [gray!10](0.0000,-6.6000)--(0.6000,-6.6000);
\draw [gray!10](0.0000,-6.0000)--(1.2000,-6.0000);
\draw [gray!10](0.0000,-5.4000)--(1.2000,-5.4000);
\draw [gray!10](0.0000,-4.8000)--(1.8000,-4.8000);
\draw [gray!10](0.0000,-4.2000)--(3.0000,-4.2000);
\draw [gray!10](0.0000,-3.6000)--(3.6000,-3.6000);
\draw [gray!10](0.0000,-3.0000)--(4.8000,-3.0000);
\draw [gray!10](0.0000,-2.4000)--(4.8000,-2.4000);
\draw [gray!10](0.0000,-1.8000)--(6.0000,-1.8000);
\draw [gray!10](0.0000,-1.2000)--(6.6000,-1.2000);
\draw [gray!10](0.0000,-0.6000)--(6.6000,-0.6000);
\draw [gray!10](0.0000,0.0000)--(7.8000,0.0000);
\draw [black](7.8000,0.6000)--(7.8000,0.0000)--(7.2000,0.0000)--(6.6000,0.0000)--(6.6000,-0.6000)--(6.6000,-1.2000)--(6.0000,-1.2000)--(6.0000,-1.8000)--(5.4000,-1.8000)--(4.8000,-1.8000)--(4.8000,-2.4000)--(4.8000,-3.0000)--(4.2000,-3.0000)--(3.6000,-3.0000)--(3.6000,-3.6000)--(3.0000,-3.6000)--(3.0000,-4.2000)--(2.4000,-4.2000)--(1.8000,-4.2000)--(1.8000,-4.8000)--(1.2000,-4.8000)--(1.2000,-5.4000)--(1.2000,-6.0000)--(0.6000,-6.0000)--(0.6000,-6.6000)--(0.0000,-6.6000)--(0.0000,-6.0000)--(0.0000,-5.4000)--(0.0000,-4.8000)--(0.0000,-4.2000)--(0.0000,-3.6000)--(0.0000,-3.0000)--(0.0000,-2.4000)--(0.0000,-1.8000)--(0.0000,-1.2000)--(0.0000,-0.6000)--(0.0000,0.0000)--(0.0000,0.6000)--(0.6000,0.6000)--(1.2000,0.6000)--(1.8000,0.6000)--(2.4000,0.6000)--(3.0000,0.6000)--(3.6000,0.6000)--(4.2000,0.6000)--(4.8000,0.6000)--(5.4000,0.6000)--(6.0000,0.6000)--(6.6000,0.6000)--(7.2000,0.6000)--(7.8000,0.6000);
\fill [gray!40, line width=0.4pt](0.0000,-2.4000)--(1.8000,-2.4000)--(1.8000,-1.8000)--(0.0000,-1.8000)--(0.0000,-2.4000);
\draw [gray!200, line width=0.4pt](0.0000,-2.4000)--(1.8000,-2.4000)--(1.8000,-1.8000)--(0.0000,-1.8000)--(0.0000,-2.4000);
\fill [gray!40, line width=0.4pt](2.4000,-2.4000)--(4.8000,-2.4000)--(4.8000,-1.8000)--(2.4000,-1.8000)--(2.4000,-2.4000);
\draw [gray!200, line width=0.4pt](2.4000,-2.4000)--(4.8000,-2.4000)--(4.8000,-1.8000)--(2.4000,-1.8000)--(2.4000,-2.4000);
\fill [gray!40, line width=0.4pt](1.8000,-4.2000)--(2.4000,-4.2000)--(2.4000,-2.4000)--(1.8000,-2.4000)--(1.8000,-4.2000);
\draw [gray!200, line width=0.4pt](1.8000,-4.2000)--(2.4000,-4.2000)--(2.4000,-2.4000)--(1.8000,-2.4000)--(1.8000,-4.2000);
\fill [gray!40, line width=0.4pt](0.0000,-4.2000)--(1.8000,-4.2000)--(1.8000,-4.1760)--(0.0000,-4.1760)--(0.0000,-4.2000);
\draw [gray!200, line width=0.4pt](0.0000,-4.2000)--(1.8000,-4.2000)--(1.8000,-4.1760)--(0.0000,-4.1760)--(0.0000,-4.2000);
\draw (2.1000, -2.1000) node [] {$v$};
\draw [black](1.8000,-2.4000)--(1.8000,-1.8000)--(2.4000,-1.8000)--(2.4000,-2.4000)--(1.8000,-2.4000);
\draw (3.6000, -2.1000) node [] {$j$};
\draw (1.0200, -2.1000) node [] {$m$};
\draw (2.1000, -3.3000) node [] {$l$};
\draw (0.9000, -3.3000) node [] {$D$};
\draw (3.0000, -3.0000) node [] {$B$};
\draw (0.6000, -5.1000) node [] {$A$};
\draw (3.3000, -0.6000) node [] {$C$};
\end{tikzpicture}
\caption{{\small A partition and its different regions}}
\end{figure}

We first examine $\tilde b_h(q)$. Let $F_{\mathcal D}(j,l,m; q):=\sum_n f_{\mathcal D}(j,l,m;n) q^n$ be the generating function for partitions into distinct parts with arm length $j$, leg length $l$, and coarm length $m$, as displayed in the above diagram. It is quite routine (see, e.g., \cite[Chap.~3]{Andrews}) to prove that
$$
	F_{\mathcal D}(j,l,m; q)
	= 
	F_{\mathcal D}(A, q)F_{\mathcal D}(B, q) 
	F_{\mathcal D}(C, q)F_{\mathcal D}(D, q),
$$
where
 the generating functions for the regions $A,B,C,D$ are respectively
\begin{align*}
F_{\mathcal D}(A, q)&=(-q;q)_m,\\
F_{\mathcal D}(B, q)&=\binom{ j}{l}_q q^{l(l-1)/2},\\
F_{\mathcal D}(C, q)&=\left(-q^{m+j+2};q\right)_\infty= { (-q;q)_\infty \over (-q;q)_{m+j+1}},\\
F_{\mathcal D}(D, q)&=q^{(m+1)(l+1)+j}.
\end{align*}
Hence,
\begin{align*}
F_{\mathcal D}(j,l,m; q)&=F_{\mathcal D}(A, q)F_{\mathcal D}(B, q) 
	F_{\mathcal D}(C, q)F_{\mathcal D}(D, q)\\
&=(-q;q)_m
\binom{j}{l}_q q^{l(l-1)/2}
{ (-q;q)_\infty\over (-q;q)_{m+j+1}}
q^{(m+1)(l+1)+j}\\
&=q^{(m+1)(l+1)+j + l(l-1)/2}
\binom{ j}{l}_q 
{ (-q;q)_\infty\over (-q^{m+1};q)_{j+1}}.
\end{align*}
As $\binom{j}{l}_q = 0$ for $j<l$, we see that $F_{\mathcal D}(j,l,m; q)=0$. 
Since $h=j+l+1$, the condition $j\geq l$ implies that
$ h-l-1 \geq l $, and thus we have $l\leq (h-1)/2 $.
We therefore have
\begin{align*}
\tilde b_h(q)
&=\sum_{l=0}^{\lceil h/2 \rceil -1} \sum_{m\geq 0} F_{\mathcal D}(h-l-1,l,m;q)\\
&=\sum_{l=0}^{\lceil h/2 \rceil -1} \sum_{m\geq 0} 
q^{(m+1)(l+1)+(h-l-1) + l(l-1)/2}
\binom{ h-l-1}{l}_q 
{ (-q;q)_\infty\over (-q^{m+1};q)_{h-l-1+1}},
\end{align*}
which is equal to the expression given in the theorem.

\medskip

Next, we examine $\tilde{a}_h(q)$. Let $F_{\mathcal O}(j,l,m; q):=\sum_n f_{\mathcal O}(j,l,m;n) q^n$ be the generating function for partitions into odd parts with arm length $j$, leg length $l$, and coarm length $m$.  If $m=2m'$ is even, then $j=2j'$ is also even. 
With $j+l+1=h$, we have $2j'+l+1=h$ and $ j'\leq (h-1)/2 $.
The generating functions for the regions $A,B,C,D$ are respectively
\begin{align*}
F_{\mathcal O}^e(A, q)&=\frac{1}{(q;q^2)_{m'}},\quad F_{\mathcal O}^e(B, q)=\binom{ j'+l}{j'}_{q^2} ,\\
F_{\mathcal O}^e(C, q)&=\frac{1}{(q^{2m'+2j'+1}; q^2)_\infty},\quad F_{\mathcal O}^e(D, q)=q^{(2m'+1)(l+1)+2j'},
\end{align*}
where to obtain $F_{\mathcal O}^e(B, q)$, we made the following calculation:
\begin{equation*}
\sum_{l\geq 0} F_{\mathcal O}^e(B, q)t^l
=\frac{1}{(t;q^2)_{j'+1}}
=\sum_{l\geq 0} \binom{ j'+l}{l}_{q^2} t^l.
\end{equation*}
Hence, with $2j'+l+1=h$, we have
\begin{align*}
F_{\mathcal O}^e(2j',l, 2m';q)&=F_{\mathcal O}^e(A, q)F_{\mathcal O}^e(B, q) 
	F_{\mathcal O}^e(C, q)F_{\mathcal O}^e(D, q)\\
&=\frac{1}{(q;q^2)_{m'}}
\binom{ j'+l}{j'}_{q^2} 
{1\over (q^{2m'+2j'+1};q^2)_\infty }
q^{(2m'+1)(l+1)+2j'}\\
&=\frac{1}{(q;q^2)_{\infty}}
\binom{ h-j'-1}{j'}_{q^2} 
{(q^{2m'+1};q^2)_{j'}}
q^{h + (2h-4j')m' }.
\end{align*}

\medskip

If $m=2m'+1$ is odd, then $j=2j'+1$ is also odd.
Since $j+l+1=h$, we have $2j'+1+l+1=h$ and $j'\leq (h-2)/2 $.
The generating functions for the regions $A,B,C,D$ are respectively
\begin{align*}
F_{\mathcal O}^o(A, q)&=\frac{1}{(q;q^2)_{m'+1}},\quad F_{\mathcal O}^o(B, q)= \binom{j'+l}{j'}_{q^2} q^l,\\
F_{\mathcal O}^o(C, q)&=\frac{1}{(q^{2m'+2j'+3}; q^2)_\infty},\quad F_{\mathcal O}^o(D, q)=q^{(2m'+2)(l+1)+2j'+1},
\end{align*}
where
where to obtain $F_{\mathcal O}^o(B, q)$, we made the following calculation:
\begin{equation*}
\sum_{l\geq 0} F_{\mathcal O}^o(B, q)t^l
=\frac{1}{(tq;q^2)_{j'+1}}
=\sum_{l\geq 0} \binom{ j'+l}{l}_{q^2} (tq)^l.
\end{equation*}
Hence, for $2j'+1+l+1=h$, we have
\begin{align*}
F_{\mathcal O}^o(2j'+1,l,2m'+1;q)&=F_{\mathcal O}^o(A, q)F_{\mathcal O}^o(B, q) 
	F_{\mathcal O}^o(C, q)F_{\mathcal O}^o(D, q)\\
&=\frac{1}{(q;q^2)_{m'+1}}
\binom{ j'+l}{j'}_{q^2}  q^l
{1\over (q^{2m'+2j'+3};q^2)_\infty }
q^{(2m'+2)(l+1)+2j'+1}\\
&=\frac{1}{(q;q^2)_{\infty}}
\binom{ h-j'-2}{j'}_{q^2}  
 (q^{2m'+3};q^2)_{j'}
q^{3h-4j'-3+(2h-4j'-2)m'}.
\end{align*}
Using Euler's well-known identity $(-q;q)_\infty (q;q^2)_\infty=1$, combining the above 
two cases yields 
\begin{align*}
\tilde{a}_h(q)
&= \sum_{j'=0}^{\lceil h/2 \rceil -1} \sum_{m'\geq 0} F_{\mathcal O}^e(2j',h-2j'-1, 2m';q)\\
&\qquad+ \sum_{j'=0}^{\lfloor h/2 \rfloor -1} \sum_{m'\geq 0} F_{\mathcal O}^o(2j'+1,h-2j'-2,2m'+1;q)\\
&= \sum_{j'=0}^{\lceil h/2 \rceil -1} \sum_{m'\geq 0} 
\frac{1}{(q;q^2)_{\infty}}
\binom{ h-j'-1}{j'}_{q^2} 
{(q^{2m'+1};q^2)_{j'}}
q^{h + (2h-4j')m' }\\
&\qquad+ \sum_{j'=0}^{\lfloor h/2 \rfloor -1} \sum_{m'\geq 0} 
\frac{1}{(q;q^2)_{\infty}}
\binom{ h-j'-2}{j'}_{q^2}  
 (q^{2m'+3};q^2)_{j'}
q^{3h-4j'-3+(2h-4j'-2)m'}\\
&= (-q;q)_\infty\sum_{j=0}^{\lceil h/2 \rceil -1} \sum_{m\geq 0} 
q^h\binom{ h-j-1}{j}_{q^2} 
{(q^{2m+1};q^2)_{j}}
q^{(2h-4j)m }\\
&\qquad+ (-q;q)_\infty\sum_{j=0}^{\lfloor h/2 \rfloor -1} \sum_{m\geq 0} 
q^{3h-4j-3} \binom{ h-j-2}{j}_{q^2}  
 (q^{2m+3};q^2)_{j}
q^{(2h-4j-2)m}.
\end{align*}
which is exactly the expression for $\tilde{a}_h(q)$  given in the theorem.
\end{proof}

From Theorem \ref{th:gf:ab}, we can derive the following result, which shows the relatively elementary nature of the generating functions for $a_h(n)$ and $b_h(n)$.

\begin{theorem}\label{th:rat:ab}
We have
\begin{align*}
{\tilde{a}}_{h}(q) &=(-q;q)_\infty\ {\tilde{A}}_{h}(q),\\
{\tilde{b}}_{2h}(q) &=(-q;q)_\infty\ {\tilde{B}}_{2h}(q),\\
{\tilde{b}}_{2h+1}(q) &=(-q;q)_\infty\ \Bigl( {\tilde{B}}_{2h+1}(q) + 
\sum_{k=1}^\infty \frac{q^{k}}{1+q^{k}}\Bigr),
\end{align*}
where ${\tilde{A}}_h(q)$ and ${\tilde{B}}_h(q)$ are  rational functions in $q$.
\end{theorem}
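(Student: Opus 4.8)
The plan is to start from the closed forms of Theorem~\ref{th:gf:ab}, in which $(-q;q)_\infty$ already appears as an explicit factor, and to show that each remaining inner sum over $m$ collapses to a rational function of $q$, with the single exception of one term in the odd case of $\tilde b_h$, which will produce the series $\sum_{k\ge1}q^k/(1+q^k)$. Throughout I let $h$ denote the hook length, so that the subscripts $2h$ and $2h+1$ in the statement correspond to the even and odd cases (after relabeling the dummy index). For $\tilde a_h(q)$ the claim is essentially immediate: I would expand the finite $q$-Pochhammer symbol $(q^{2m+1};q^2)_j=\prod_{i=0}^{j-1}(1-q^{2m+1+2i})$, and likewise $(q^{2m+3};q^2)_j$, into a finite alternating sum of monomials of the shape $\pm q^{a}q^{2bm}$ with $0\le b\le j$. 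Substituting into $\sum_{m\ge0}(\cdots)q^{(2h-4j)m}$ then gives a finite combination of geometric series $\sum_{m\ge0}q^{(2b+2h-4j)m}=1/(1-q^{2b+2h-4j})$, and the only point to check is that every exponent is strictly positive. This follows from the summation ranges $j\le\lceil h/2\rceil-1$ and $j\le\lfloor h/2\rfloor-1$, which force $2h-4j\ge2$ and $2h-4j-2\ge2$ respectively. Since the $q$-binomial coefficients are polynomials and the outer sums over $j$ are finite, $\tilde A_h(q)\in\Q(q)$, proving the first identity.

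The heart of the argument is the inner sum occurring in $\tilde b_h$, namely $S_{h,j}:=\sum_{m\ge0}q^{(j+1)m}/(-q^{m+1};q)_{h-j}$. Writing $x=q^m$ and $N=h-j$, the summand becomes $x^{j+1}/\prod_{i=1}^N(1+q^ix)$, since $(-q^{m+1};q)_{h-j}=\prod_{i=1}^N(1+q^{m+i})$. One checks that $j+1\le N$ throughout the range, with equality exactly when $h$ is odd and $j=(h-1)/2$ is the top index. Hence a partial-fraction decomposition over $\Q(q)$ yields $x^{j+1}/\prod_{i=1}^N(1+q^ix)=C+\sum_{i=1}^N c_i/(1+q^ix)$ with $c_i\in\Q(q)$, constant $C=q^{-N(N+1)/2}$ when $j+1=N$ and $C=0$ otherwise, and evaluation at $x=0$ forces $\sum_{i=1}^N c_i=-C$.

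The subtlety is that substituting $x=q^m$ and summing termwise would create divergent tails $\sum_k 1/(1+q^k)$. I would circumvent this using $1/(1+q^{m+i})=1-q^{m+i}/(1+q^{m+i})$; together with $\sum_i c_i=-C$ the constant contributions cancel, leaving the convergent identity
\[
S_{h,j}=-\sum_{i=1}^N c_i\sum_{k\ge i}\frac{q^k}{1+q^k}=C\sum_{k\ge1}\frac{q^k}{1+q^k}+\sum_{i=1}^N c_i\sum_{k=1}^{i-1}\frac{q^k}{1+q^k},
\]
whose second summand is a finite combination of the rational functions $q^k/(1+q^k)$, hence rational. I expect this convergence bookkeeping, isolating the constant $C$ and verifying cancellation of the divergent parts, to be the main obstacle; everything else is formal manipulation.

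It then remains to assemble these pieces. When $h$ is even, every index $j\le\lceil h/2\rceil-1$ satisfies $j+1<h-j$, so each $C$ vanishes, each $S_{h,j}$ is rational, and $\tilde b_{2h}(q)=(-q;q)_\infty\tilde B_{2h}(q)$ with $\tilde B_{2h}\in\Q(q)$. When $h$ is odd, the only non-rational contribution comes from the top index $j=(h-1)/2$, where $N=(h+1)/2$; there $h-j-1=j$ so $\binom{h-j-1}{j}_q=1$, and a direct computation gives $h+j(j-1)/2-N(N+1)/2=0$. Consequently the prefactor $q^{h+j(j-1)/2}\binom{h-j-1}{j}_q$ multiplied by $C=q^{-N(N+1)/2}$ equals exactly $1$, so the coefficient of $\sum_{k\ge1}q^k/(1+q^k)$ is precisely $1$ while all other contributions are rational. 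This gives $\tilde b_{2h+1}(q)=(-q;q)_\infty\bigl(\tilde B_{2h+1}(q)+\sum_{k\ge1}q^k/(1+q^k)\bigr)$ with $\tilde B_{2h+1}\in\Q(q)$, completing the proof.
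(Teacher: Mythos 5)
Your proposal is correct, and for the $\tilde b_h$ identities (the heart of the theorem) it takes a genuinely different route from the paper; the $\tilde a_h$ part is essentially identical in both (expand the finite Pochhammer symbols into finitely many monomials and sum geometric series, checking the exponents $2h-4j\geq 2$ and $2h-4j-2\geq 2$ are positive). For $\tilde b_h$, the paper first expands $1/(-q^{m+1};q)_{h-j}$ by the $q$-binomial series \eqref{eq 2}, performs the geometric summation in $m$ to obtain the single $k$-sum $Y(h,j;q)=\sum_{k\geq 0}\binom{h-j+k-1}{k}_q\frac{(-q)^k}{1-q^{j+k+1}}$, and then argues in two separate cases: for $j\leq h/2-1$ the pole $1-q^{j+k+1}$ cancels against a factor of the numerator of the $q$-binomial coefficient, giving rationality; for the top odd index $h=2j+1$ it performs a partial-fraction split of the summand in $x=q^k$ (polynomial part plus the single pole $1-xq^{j+1}$) and invokes the Lambert-series identity \eqref{eq 3} to extract $\sum_{k\geq 1}q^k/(1+q^k)$. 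You instead apply partial fractions directly to the $m$-summand, viewed as a rational function of $x=q^m$ with $N=h-j$ simple poles, so that the parity dichotomy is absorbed into a single mechanism: the obstruction to rationality is exactly the constant term $C$, which is nonzero precisely when numerator and denominator degrees agree, i.e.\ when $h$ is odd and $j=(h-1)/2$. Your convergence bookkeeping — using $\sum_i c_i=-C$ together with $1/(1+y)=1-y/(1+y)$ to cancel the divergent constants — plays exactly the role of the paper's regrouping $F(q^k,q)-F(0,q)$ combined with \eqref{eq 3}, and it is handled rigorously since the resulting termwise series $\sum_i c_i\sum_{m\geq 0}q^{m+i}/(1+q^{m+i})$ converges absolutely for $|q|<1$. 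What your route buys is uniformity (no $q$-binomial series, no separate cancellation case, no Lambert-series identity) and a structural explanation of where the non-rational term comes from; what the paper's route buys is the explicit intermediate representation $Y(h,j;q)$ and more explicit formulas for the rational constituents. Both arguments pin the coefficient of $\sum_{k\geq 1}q^k/(1+q^k)$ to exactly $1$ via the same exponent identity $h+j(j-1)/2=(j+1)(j+2)/2$ at $j=(h-1)/2$, where $\binom{h-j-1}{j}_q=\binom{j}{j}_q=1$.
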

Here are the first few values of ${\tilde{A}}_h(q)$ and ${\tilde{B}}_h(q)$:
\begin{align*}
{\tilde{A}}_1(q)&=\frac{q}{1-q^2},
\qquad
{\tilde{A}}_2(q)=\frac{ q^{5} + q^{3} + q^{2}  }{1-q^4},\\
{\tilde{A}}_3(q)&=\frac{ q^{10} + q^{9} + 2 \, q^{7} + 3 \, q^{5} - q^{4} + 2 \, q^{3}  }{(1-q^6) (1+q^2)},\\
{\tilde{B}}_1(q)&=0,
\qquad
{\tilde{B}}_2(q)=\frac{q^{2}}{1-q^2},
\qquad
{\tilde{B}}_3(q)=\frac{q^{5} - q^{2} - q }{(1-q^4) (1+q)}.
\end{align*}
\begin{proof}
The basic idea of the proof is to 
simplify the expressions given in Theorem \ref{th:gf:ab} by removing $\sum_{m\geq 0}$ via the geometric summation formula.
We have the following useful identities, which are consequences of well-known identities for $q$-binomial coefficients (for example, see \cite[Ch. 3]{Andrews}):
\begin{align}
(q^{2m+1};q^2)_n &=\sum_{k=0}^n \binom{n}{k}_{q^2}(-1)^k  q^{2mk+k^2}, \label{eq 1}\\
\frac{1}{(-q^{m+1};q)_n} &=\sum_{k=0}^\infty \binom{n +k-1}{k}_q (-1)^k  q^{(m+1)k}, \label{eq 2}\\
\sum_{k=1}^\infty \frac{(-q)^k}{1-q^k}
&=- \sum_{k=1}^\infty \frac{q^k}{1+q^k}.\label{eq 3}
\end{align}

From Theorem \ref{th:gf:ab}, we have 
\begin{equation}\label{ah_for_asymptotic}
\frac{{\tilde{a}}_h(q)}{(-q;q)_\infty
} = 
	\sum_{j=0}^{\lceil h/2\rceil -1}
\binom{h-j-1}{j}_{q^2}  A_1(h,j;q)
	+ \sum_{j=0}^{\lfloor h/2\rfloor-1}
\binom{ h-j-2}{j}_{q^2}  A_2(h,j;q),
\end{equation}
where, with the help of \eqref{eq 1} we have
\begin{align}\label{A_1}
A_1(h,j;q)
&:=q^h
\sum_{m\geq 0} 
{(q^{2m+1}; q^2)_j  q^{ (2h-4j)m} }\\
&=q^h
\sum_{m\geq 0} 
 \sum_{k=0}^j \binom{j}{k}_{q^2}(-1)^k  q^{2mk+k^2}  q^{ (2h-4j)m}\nonumber\\
&=q^{h}
 \sum_{k=0}^j
 \binom{j}{k}_{q^2}(-1)^k q^{k^2}
\sum_{m\geq 0} 
 (q^{2h+2k-4j})^m\nonumber\\
&=q^{h}
 \sum_{k=0}^j
 \binom{j}{k}_{q^2}
 \frac{(-1)^k q^{k^2}}{1-q^{2h+2k-4j}},\nonumber
\end{align}
and
\begin{align}\label{A_2}
A_2(h,j;q)
&:=q^{3h-4j-3}
\sum_{m\geq 0} 
{(q^{2m+3}; q^2)_j q^{(2h-4j  - 2)m  }
}\\
&=q^{3h-4j-3}
\sum_{m\geq 0} 
 \sum_{k=0}^j \binom{j}{k}_{q^2}(-1)^k  q^{2(m+1)k+k^2} 
q^{(2h -4j  - 2)m  }\nonumber\\
&=q^{3h-4j-3}
\sum_{k=0}^j
 \binom{j}{k}_{q^2}(-1)^k q^{k^2+2k} 
\sum_{m\geq 0}  (q^{2h -4j  + 2k - 2  })^m\nonumber\\
&=q^{3h-4j-3}
\sum_{k=0}^j
 \binom{j}{k}_{q^2} 
 \frac{(-1)^k q^{k^2+2k}}{1-q^{2h -4j  + 2k - 2  }}.\nonumber
\end{align}
Thus, it is clear that ${\tilde{A}}_h(q)$ is a rational function in $q$.

We now consider $\tilde{b}_h(q)$. From Theorem \ref{th:gf:ab}, we write
\begin{equation}\label{bh_for_asymptotic}
{\tilde{b}}_h(q) =
(-q;q)_\infty
\sum_{j=0}^{\lceil h/2\rceil -1}
	q^{h + j(j-1)/2} \binom{ h-j-1}{j}_q Y(h,j;q),
\end{equation}
where, with the help of \eqref{eq 2},
\begin{align*}
Y(h,j;q)
&:=\sum_{m\geq 0} { q^{m(j+1)}\over (-q^{m+1};q)_{h-j}}\\
&=\sum_{m\geq 0} { q^{m(j+1)}}
\sum_{k=0}^\infty \binom{h-j +k-1}{k}_q (-1)^k  q^{(m+1)k}\\
&=\sum_{k=0}^\infty
(-q)^k \binom{h-j +k-1}{k}_q
\sum_{m\geq 0} { q^{m(j+k+1)}}\\
&=\sum_{k=0}^\infty
 \binom{h-j +k-1}{k}_q
\frac{(-q)^k}{1- q^{j+k+1}}.
\end{align*}
Since
\begin{equation*}
\binom{h-j +k-1}{k}_q
=
\frac{\left(q^{k+1};q\right)_{h-j-1}}{(q;q)_{h-j-1}},
\end{equation*}
there are two cases to be considered.

\underline{Case 1}: Suppose $k+h-j-1\geq j+k+1$; i.e., 
$j\leq h/2-1$.
Then the denominator term $1-q^{j+k+1}$ is cancelled with a factor in the numerator. Hence, 
$$ 
\binom{h-j +k-1}{k}_q
\frac{(-q)^k}{1- q^{j+k+1}} = \frac{1}{(q;q)_{h-j-1}} \sum_{s=1}^{h-j-1} C_s(h,j; q) q^{sk},
$$
where $C_s(h,j; q)$ are polynomials in $q$. Notice that $C_s(h,j;q)$ are independent of $k$. Thus,  
\begin{align*}
Y(h,j;q)
=\sum_{k=0}^\infty\frac{1}{(q;q)_{h-j-1}} \sum_{s=0}^{h-j-1} C_s(h,j; q) q^{sk}
=\frac{1}{(q;q)_{h-j-1}} \sum_{s=0}^{h-j-1}  \frac{C_s(h,j; q)}{1-q^{s}}.
\end{align*}
Hence, $Y(h,j;q)$ is a rational function in $q$, so that
$ {\tilde{b}}_h(q)/ (-q;q)_\infty  $ is also a rational function in $q$.

\underline{Case 2}: Suppose on the other hand that $k+h-j-1< j+k+1$; i.e., $j> h/2-1$.
Since $j\leq \lceil h/2\rceil-1$, 
the only possible choice is that $h=2j+1$ is odd. We then have
\begin{align*}
Y(2j+1,j;q)
&=\sum_{k=0}^\infty
 \binom{2j+1-j +k-1}{k}_q
\frac{(-q)^k}{1- q^{j+k+1}}\\
&=\sum_{k=0}^\infty
\frac{\left(q^{k+1};q\right)_j(-q)^k}{(q;q)_j\left(1- q^{j+k+1}\right)}\\
&=\frac{ 1}{(q;q)_{j}} \sum_{k= 0}^\infty Z_k(j,q^k;q),
\end{align*}
with
\begin{align*}
Z_k(j,x;q)
&:=(-1)^k
\frac{x(xq;q)_j}{1- xq^{j+1}}.
\end{align*}
We write
\begin{equation*}
Z_k(j,x;q)=(-1)^k \left(F(x,q)+\frac{G(q)}{1-xq^{j+1}}\right),
\end{equation*}
where $F(x,q),G(q)$, viewed as polynomials of $x$,  are determined by
$$
F(x,q)+\frac{G(q)}{1-xq^{j+1}}=\frac{x(xq;q)_j}{1- xq^{j+1}}.
$$
Taking $x\rightarrow 0$, we see that $G(q)=-F(0,q)$ and
\begin{equation*}
{G(q)}
={x(xq;q)_j} \Big|_{x=q^{-j-1}}
=\frac{(-1)^j}{q^{(j+1)(j+2)/2}}(q;q)_j.
\end{equation*}
On the other hand, with the help of \eqref{eq 3} we have
\begin{align*}
\sum_{k=0}^\infty \frac{(-1)^k q^{k+j+1}}{1-q^{k+j+1}}
&=(-1)^{j+1} \sum_{k=0}^\infty \frac{(-q)^{k+j+1}}{1-q^{k+j+1}}\\
&=(-1)^{j+1} \sum_{k=1}^\infty \frac{(-q)^{k}}{1-q^{k}}
+  (-1)^{j}\sum_{k=1}^j \frac{(-q)^{k}}{1-q^{k}}\\
&=(-1)^j \sum_{k=1}^\infty \frac{q^{k}}{1+q^{k}}
+ (-1)^j\sum_{k=1}^j \frac{(-q)^{k}}{1-q^{k}}.
\end{align*}
Therefore, we obtain
\begin{align*}
Y(2j+1,j;q)
&=\frac{ 1}{(q;q)_{j}} \sum_{k= 0}^\infty Z_k(j,q^k;q)\\
&=\frac{ 1}{(q;q)_{j}} \sum_{k= 0}^\infty 
(-1)^k \left(F(q^k,q)+G(q)+\frac{G(q)q^{k+j+1}}{1-q^{k+j+1}}\right)\\
&=\frac{ 1}{(q;q)_{j}} \sum_{k= 0}^\infty 
(-1)^k \left(F(q^k,q)-F(0, q)\right)\\
& \qquad +
\frac{ G(q)}{(q;q)_{j}} \sum_{k= 0}^\infty 
(-1)^k \left(\frac{q^{k+j+1}}{1-q^{k+j+1}}\right)\\
&=\text{Rational($q$)} +
\frac{ (-1)^j }{q^{(j+1)(j+2)/2}} \sum_{k= 0}^\infty 
(-1)^k \left(\frac{q^{k+j+1}}{1-q^{k+j+1}}\right)\\
&=\text{Rational($q$)} + 
\frac{ 1 }{q^{(j+1)(j+2)/2}} 
 \sum_{k=1}^\infty \frac{q^{k}}{1+q^{k}},
\end{align*}
where $\text{Rational}(q)$ is some rational function in $q$ which may vary from one step to the next.  In the fourth identity of the above equations, 
$$
\sum_{k= 0}^\infty 
(-1)^k \left(F(q^k,q)-F(0, q)\right)
$$
is a rational function in $q$, because $F(x,q)$ is a polynomial in $x$.

Finally, when $h=2m+1$ is odd, we therefore have
\begin{align*}
\frac{{\tilde{b}}_h(q)}{
	(-q;q)_\infty
} 
&=\sum_{j=0}^{m-1}
	q^{h + j(j-1)/2} \binom{h-j-1}{j}_q Y(h,j;q)+ q^{2m+1+m(m-1)/2} Y(2m+1,m;q)\\
&=\text{Rational($q$)} + \sum_{k=1}^\infty \frac{q^{k}}{1+q^{k}}.
\end{align*}
This completes the proof.
\end{proof}

\section{Preliminaries}\label{Preliminaries}

In order to prove Theorems \ref{Alpha Asymptotic Theorem} and \ref{Beta Asymptotic Theorem}, we must first recall Euler--Maclaurin summation and Wright's circle method, as well as some consequences of these methods.

\subsection{Euler--Maclaurin Summation}\label{Euler-Maclaurin Summation Section}

Euler--Maclaurin summation gives a method for estimating, and even obtaining exact formulas for, sums involving terms of the form $f(mz)$. There are many formulations, some asymptotic and some exact. We will use the following formulation, which comes from \cite{BJU}. We actually use Euler--Maclaurin summation in a slightly unusual manner to analyze sums whose terms do not quite fit the form $f(mz)$, inspired by the methodology applied in \cite{BCM}.

Suppose that $D_\theta := \{ r e^{i\alpha} : r \geq 0, |\alpha| \leq \theta \}$ for some $0 \leq \theta < \frac{\pi}{2}$. Let $f : \C \to \C$ be holomorphic in the domain $D_\theta$, and assume that $f$ and all of its derivatives decay at infinity at least as fast as $|z|^{-1-\varepsilon}$ for some $\varepsilon > 0$. (This property is often called {\it sufficient decay}.) Finally, let $B_n(x)$ denote the Bernoulli polynomials, and let $\widetilde B_n(x) := B_n\lp \{ x \} \rp$ be the periodic Bernoulli function.  Then from \cite[Equation (5.7)]{BJU} in the special case $a=1$, we have the following proposition.

\begin{proposition} \label{EM Formula}
	For each $N \geq 1$, as $z \to 0$ in the region $D_\theta$, we have
	\begin{align*}
		\sum_{m \geq 0} f\lp (m+1)z \rp = \dfrac{1}{z} \int_0^\infty f(x) dx &- \sum_{k \geq 0} \dfrac{f^{(k)}(0) z^k}{(k+1)!} - \sum_{n=0}^{N-1} \dfrac{B_{n+1}(0) f^{(n)}(z)}{(n+1)!} z^n \\ &\quad- \dfrac{(-1)^N z^{N-1}}{N!} \int_z^{z\infty} f^{(N)}(w) \widetilde{B}_N\lp \frac{w}{z} - 1 \rp dw,
	\end{align*}
	when $f$ and all its derivatives have sufficient decay at infinity, where the last integral is taken along a path of fixed argument.
\end{proposition}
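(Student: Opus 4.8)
The plan is to obtain this formula as the $a=1$ specialization of the general Euler--Maclaurin expansion in \cite[Eq.~(5.7)]{BJU}, and to record the bookkeeping identities that force the particular shape of the two ``extra'' terms. The mechanism underneath is the classical Euler--Maclaurin identity produced by repeatedly integrating by parts against the antiderivatives $B_{n+1}(x)/(n+1)!$ of the Bernoulli polynomials and summing over unit intervals; for any $N\geq 1$ this yields an \emph{exact} identity consisting of a main integral, a finite string of endpoint terms weighted by $B_{n+1}(0)$, and a remainder integral against the periodic Bernoulli function $\widetilde B_N$. Concretely I would set $g(x):=f(xz)$, so that $g^{(n)}(x)=z^n f^{(n)}(xz)$, and apply the half-line identity to $\sum_{m\geq 0} f((m+1)z)=\sum_{m\geq 1} g(m)$ on $[1,\infty)$, which gives $\sum_{m\geq1}g(m)=\int_1^\infty g(x)\,dx-\sum_{n=0}^{N-1}\tfrac{B_{n+1}(0)}{(n+1)!}g^{(n)}(1)+\tfrac{(-1)^{N+1}}{N!}\int_1^\infty \widetilde B_N(x)g^{(N)}(x)\,dx$ (the endpoint contributions at $+\infty$ vanish because $f$ and all its derivatives decay faster than $|w|^{-1-\varepsilon}$).

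Next I would identify each term after the substitution $w=xz$, using the cone hypothesis $D_\theta$ and sufficient decay to deform every contour onto the ray of argument $\arg z$. The main integral becomes $\int_1^\infty g(x)\,dx=\tfrac1z\int_z^{z\infty} f(w)\,dw$, and I would split $\int_z^{z\infty}=\int_0^{z\infty}-\int_0^{z}$. The piece $\tfrac1z\int_0^{z\infty}f$ equals $\tfrac1z\int_0^\infty f(x)\,dx$ by contour deformation, giving the stated leading term, while the piece $-\tfrac1z\int_0^z f(w)\,dw$ is exactly $-\sum_{k\geq 0}\tfrac{f^{(k)}(0)z^k}{(k+1)!}$: expanding $f$ in its Taylor series at $0$ and integrating termwise gives $\int_0^z f=\sum_{k\geq0}\tfrac{f^{(k)}(0)}{(k+1)!}z^{k+1}$. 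This is the origin of the infinite sum in the statement, and it is the identity I would justify most carefully, since it requires analyticity of $f$ at the origin together with a termwise-integration argument valid uniformly as $z\to 0$ in the cone.

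The remaining two terms then fall out directly. The left-endpoint contributions become $-\sum_{n=0}^{N-1}\tfrac{B_{n+1}(0)}{(n+1)!}g^{(n)}(1)=-\sum_{n=0}^{N-1}\tfrac{B_{n+1}(0)}{(n+1)!}z^n f^{(n)}(z)$, matching the finite Bernoulli sum (note $B_1(0)=-\tfrac12$ produces the usual half-endpoint term, and the odd Bernoulli numbers $B_3(0)=B_5(0)=\cdots=0$ kill all but the even-index contributions). Finally the remainder $\tfrac{(-1)^{N+1}}{N!}\int_1^\infty \widetilde B_N(x)g^{(N)}(x)\,dx$ becomes, after $w=xz$ and $g^{(N)}(x)=z^N f^{(N)}(xz)$, the displayed integral $\tfrac{(-1)^{N+1}z^{N-1}}{N!}\int_z^{z\infty} f^{(N)}(w)\,\widetilde B_N(\tfrac wz-1)\,dw$, which equals $-\tfrac{(-1)^{N}z^{N-1}}{N!}\int_z^{z\infty} f^{(N)}(w)\widetilde B_N(\tfrac wz-1)\,dw$; the shift by $1$ in the argument of $\widetilde B_N$ is harmless by $1$-periodicity but records that the lower limit $w=z$ corresponds to argument $0$.

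The step I expect to be the main obstacle is the uniform analytic control as $z\to 0$ throughout $D_\theta$: one must verify that sufficient decay of \emph{every} derivative both annihilates the boundary terms at infinity and makes the remainder integral converge along the ray, and that all the contour deformations and the termwise integration of the Taylor series are legitimate uniformly in $z$. Since precisely this analysis is the content of the general theorem in \cite{BJU}, the cleanest route is to quote their Equation~(5.7) with $a=1$ and then merely invoke the two bookkeeping identities above --- the contour deformations and $\tfrac1z\int_0^z f=\sum_{k\geq0}\tfrac{f^{(k)}(0)z^k}{(k+1)!}$ --- to put their formula into the stated form.
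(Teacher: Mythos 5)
Your proposal is correct and matches the paper's approach: the paper gives no independent proof of this proposition, stating it directly as the $a=1$ special case of \cite[Equation (5.7)]{BJU}, which is exactly the route you recommend in your final paragraph. Your supplementary derivation sketch (the half-line Euler--Maclaurin identity for $g(x)=f(xz)$, the contour rotation giving $\tfrac{1}{z}\int_0^{z\infty}f=\tfrac{1}{z}\int_0^\infty f$, the Taylor expansion of $-\tfrac{1}{z}\int_0^z f$ producing the infinite sum, and the periodicity of $\widetilde B_N$ reconciling the remainder) is also sound, though it supplies details the paper leaves entirely to the citation.
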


\subsection{Wright's circle method}

We now recall Wright's circle method, which we will use in Section \ref{Asymptotics} to obtain asymptotics for the sequences $a_h(n)$ and $b_h(n)$ as $n \to \infty$. We will use the following special case of the formulation given by Ngo and Rhoades \cite[Proposition 1.8]{NR}.

\begin{proposition} \label{WrightCircleMethod}
    Suppose that $c(n)$ are integers defined by
    \begin{align*}
        \sum_{n \geq 0} c(n) q^n = L(q) \xi(q)
    \end{align*}
    for analytic functions $L, \xi$ satisfying the following hypotheses:
    \begin{itemize}
        \item[(H1)] As $|z| \to 0$ in the cone $D_\theta$, we have $$L\lp e^{-z} \rp \sim \dfrac{1}{z} \sum_{k \geq 0} a_k z^k$$ for $a_k \in \C$,
        \item[(H2)] As $|z| \to 0$ in the cone $D_\theta$, we have $$\xi\lp e^{-z} \rp = K e^{\frac{A}{z}} \lp 1 + O_\theta\lp e^{- \frac{B}{z}} \rp \rp$$ for $K, A\geq0$ and $B>0$\footnote{Ngo and Rhoades require that $B>A$, which arises naturally in the case that $\xi$ is modular but is not strictly necessary when proving Proposition \ref{WrightCircleMethod}.},
        \item[(H3)] As $|z| \to 0$ outside $D_\theta$ and satisfying $\mathrm{Re}(z)>0$, we have $$\left| L\lp e^{-z} \rp \right| \ll_\theta |z|^{-C}$$ for some $C>0$,
        \item[(H4)] As $|z| \to 0$ outside $D_\theta$ and satisfying $\mathrm{Re}(z)>0$, we have $$\left| \xi\lp e^{-z} \rp \right| \ll_\theta \xi\lp \left| e^{-z} \right| \rp e^{- \frac{\delta'}{\mathrm{Re}(z)}}$$ for some $\delta' > 0$.
    \end{itemize}
     Then as $n \to \infty$, we have for any $N \in \Z^+$ that
	\begin{align*}
		c(n) = Ke^{2 \sqrt{A n}} n^{-\frac{1}{4}} \lp \sum\limits_{r=0}^{N-1} p_r n^{-\frac{r}{2}} + O\left(n^{-\frac N2}\right) \rp,
	\end{align*}
	where $p_r := \sum\limits_{j=0}^r a_j c_{j,r-j}$ with $c_{j,r} := \dfrac{(-\frac{1}{4\sqrt{A}})^r \sqrt{A}^{j - \frac 12}}{2\sqrt{\pi}} \dfrac{\Gamma(j + \frac 12 + r)}{r! \Gamma(j + \frac 12 - r)}$. 
\end{proposition}

\begin{remark}
    We note that because our generating functions are given as the generating function for partitions into odd parts multiplied by a rational or nearly rational function, there are methods which in principle can give better error terms. See for example \cite{GKW} for the case of unrestricted partitions.
\end{remark}

Note that hypotheses (H1) and (H2) require asymptotics for $L$ and $\xi$ on the major arc, for $q$ near 1, while hypotheses (H3) and (H4) require that $L$ and $\xi$ are small on the minor arc, for $q$ away from 1.

\section{Asymptotics for $a_h(n)$ and $b_h(n)$} \label{Asymptotics}

In this section, we use Wright's circle method to compute the first terms of the asymptotic expansions of the $q$-series $a_h(n)$ and $b_h(n)$ for integers $h \geq 1$, and we use this to prove Theorems \ref{Alpha Asymptotic Theorem} and \ref{Beta Asymptotic Theorem}. In order to accomplish this, we use Euler--Maclaurin summation to compute asymptotics for $\tilde{a}_h(q)$ and $\tilde{b}_h(q)$ with $q = e^{-z}$, as $z \to 0$ in any conical region (i.e. if $z = x + iy$, we may restrict $z$ to the region $0 \leq x < \delta y$ for any $\delta > 0$).

Although we showed in Section \ref{Generating Functions} that the generating functions of $\tilde{a}_h(q)$ and $\tilde{b}_h(q)$ are rational (or nearly rational) functions multiplied by the modular form $\lp -q;q \rp_\infty$, we use different forms of the generating functions here that are more convenient for Euler--Maclaurin summation. For $\tilde{a}_h(q)$, we shall use the representation from \eqref{ah_for_asymptotic}:
\begin{align*}
	\tilde{a}_h(q) = \lp -q;q \rp_\infty \left[ \sum_{j=0}^{\lceil h/2\rceil-1} \binom{h-j-1}{j}_{q^2} A_1\lp h,j; q \rp + \sum_{j=0}^{\lfloor h/2\rfloor-1} \binom{h-j-2}{j}_{q^2} A_2\lp h,j; q \rp \right]
\end{align*}
 where $A_1(h,j;q)$ and $A_2(h,j;q)$ are defined in \eqref{A_1} and \eqref{A_2}, respectively.
 In both cases, the asymptotic analysis will be carried out using the more general functions
\begin{align*}
	F_{j,k,l}(q) := \sum_{m \geq 0} \lp q^{2m+k}; q^2 \rp_j q^{lm}.
\end{align*}
Observe that $A_1\lp h,j;q \rp = q^h F_{j,1,2h-4j}(q)$ and $A_2\lp h,j;q \rp = q^{3h-4j-2} F_{j,3,2h-4j-2}(q)$, and therefore if $q = e^{-z}$, then as $z\to0$ we have $A_1\lp h,j;q \rp \sim F_{j,1,2h-4j}(e^{-z})$ and $A_2\lp h,j;q \rp \sim F_{j,3,2h-4j-2}(e^{-z})$. Thus if $q=e^{-z}$, then as $z \to 0$ we have the asymptotic formula
\begin{align} \label{a_h first asymptotic}
	\tilde{a}_h(q)&\sim \lp -q;q \rp_\infty \Bigg[ \sum_{j=0}^{\lceil h/2\rceil-1} \binom{h-j-1}{j} F_{j,1,2h-4j}(e^{-z})\nonumber\\
 &\qquad\qquad\qquad+ \sum_{j=0}^{\lfloor h/2\rfloor-1} \binom{h-j-2}{j} F_{j,3,2h-4j-2}(e^{-z}) \Bigg].
\end{align}

For the asymptotic analysis of $\tilde{b}_h(q)$, we will use the representation from \eqref{bh_for_asymptotic}:
\begin{align*}
	\tilde{b}_h(q) = \lp -q;q \rp_\infty B_h(q), \ \ \ B_h(q) := \sum_{j=0}^{\lceil h/2\rceil-1} q^{h + \frac{j(j-1)}{2}} \binom{h-j-1}{j}_q \sum_{m \geq 0} \dfrac{q^{(j+1)m}}{\lp -q^{m+1}; q \rp_{h-j}}.
\end{align*}
In order to understand the asymptotic behavior of $B_h(e^{-z})$ as $z \to 0$, we study the more general functions
\begin{align*}
	G_{j,k}(q) := \sum_{m \geq 0} \dfrac{q^{jm}}{\lp -q^{m+1}; q \rp_k}.
\end{align*}
Observe that if $q = e^{-z}$, then as $z \to 0$ we have the asymptotic formula
\begin{align} \label{b_h first asymptotic}
	\tilde{b}_h(q) \sim \lp -q;q \rp_\infty \sum_{j=0}^{\lceil h/2\rceil-1} \binom{h-j-1}{j} G_{j+1,h-j}(e^{-z}).
\end{align}

We now estimate the functions $F_{j,k,l}(q)$ and $G_{j,k}(q)$, and we apply these estimates to the asymptotic formulas in \eqref{a_h first asymptotic} and \eqref{b_h first asymptotic}. We apply Proposition \ref{EM Formula} to compute asymptotic expansions of the functions $F_{j,k,l}(q)$ and $G_{j,k}(q)$. 

\begin{proposition}\label{F_abc Asymptotics}
As $z\to0$, we have
\begin{align*}
	F_{j,k,l}\lp e^{-z} \rp \sim \dfrac{1}{z}\int_0^\infty \lp 1 - e^{-2x} \rp^j e^{-lx} dx.
\end{align*}
\end{proposition}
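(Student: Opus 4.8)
The plan is to reduce $F_{j,k,l}(e^{-z})$ to a finite combination of elementary geometric series, each of which has a transparent $z\to 0$ expansion, and then to recognize the resulting leading coefficient as the claimed integral. This sidesteps having to verify the sufficient-decay hypotheses needed to apply Proposition \ref{EM Formula} directly, although that route is also available.

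First I would write out the summand explicitly at $q=e^{-z}$. Since $(q^{2m+k};q^2)_j=\prod_{i=0}^{j-1}\lp 1-q^{2m+k+2i}\rp$, we have
\begin{align*}
	F_{j,k,l}(e^{-z}) = \sum_{m \geq 0} \prod_{i=0}^{j-1} \lp 1 - e^{-(2i+k)z} e^{-2mz} \rp e^{-lmz}.
\end{align*}
Viewing the product as a polynomial in $u=e^{-2mz}$ with $z$-dependent coefficients and expanding over subsets $S \subseteq \{0,1,\dots,j-1\}$ gives the exact finite identity
\begin{align*}
	F_{j,k,l}(e^{-z}) = \sum_{S} (-1)^{|S|} e^{-\sigma_S z} \sum_{m \geq 0} e^{-(2|S|+l)mz} = \sum_S \frac{(-1)^{|S|} e^{-\sigma_S z}}{1 - e^{-(2|S|+l)z}},
\end{align*}
where $\sigma_S := \sum_{i \in S}(2i+k)$. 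This is the key step: the seemingly awkward shifts $k+2i$ inside each factor are now isolated in the harmless prefactors $e^{-\sigma_S z}$.

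Next I would expand each of the $2^j$ terms as $z\to 0$. Assuming $l>0$ (which holds in every instance in which the proposition is applied, so that the geometric series converge and $2|S|+l>0$ for all $S$), we have $e^{-\sigma_S z}=1+O(z)$ and $1/(1-e^{-(2|S|+l)z})=\frac{1}{(2|S|+l)z}+O(1)$, so each term contributes $\frac{(-1)^{|S|}}{(2|S|+l)z}+O(1)$. Summing the finitely many terms yields
\begin{align*}
	F_{j,k,l}(e^{-z}) = \frac{1}{z} \sum_{S} \frac{(-1)^{|S|}}{2|S|+l} + O(1) = \frac{1}{z} \sum_{s=0}^{j} \binom{j}{s} \frac{(-1)^s}{2s+l} + O(1),
\end{align*}
after grouping subsets by their cardinality $s=|S|$. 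Finally I would identify the leading coefficient with the integral: expanding $(1-e^{-2x})^j=\sum_{s=0}^j \binom{j}{s}(-1)^s e^{-2sx}$ and integrating termwise gives $\int_0^\infty (1-e^{-2x})^j e^{-lx}\,dx=\sum_{s=0}^j \binom{j}{s}\frac{(-1)^s}{2s+l}$, which matches exactly. Since this integral is strictly positive, the leading coefficient is nonzero and the claimed asymptotic equivalence follows.

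The only genuine subtlety, and the step I would be most careful about, is the convergence constraint $l>0$: it is needed both for the geometric series and for the integral at infinity, and it is exactly what makes each pole $1/(1-e^{-(2|S|+l)z})$ simple at $z=0$. I would verify that the values of $l$ arising from $A_1(h,j;q)$ and $A_2(h,j;q)$ (namely $l=2h-4j$ and $l=2h-4j-2$ over the relevant ranges of $j$) are always positive, so that no degenerate case occurs. It is worth noting as a consistency check that the parameter $k$ disappears entirely from the leading term, reflecting that the shift $k$ only perturbs the arguments $e^{-(2m+k)z}$ at order $O(z)$ and hence affects only lower-order terms in the expansion.
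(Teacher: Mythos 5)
Your proof is correct, but it follows a genuinely different route from the paper's. The paper proves Proposition \ref{F_abc Asymptotics} by Euler--Maclaurin summation: it introduces the auxiliary two-variable function $f_{j,k,l}(t;z) = \lp e^{-2z-kt}; e^{-2t}\rp_j e^{-lz}$, applies Proposition \ref{EM Formula} in $z$ with $t$ held fixed, observes that the only term that is singular as $z \to 0$ is $\frac{1}{z}\int_0^\infty f_{j,k,l}(t;x)\,dx$, and finally sets $t = z$ and passes to the limit in the integral. You instead expand the finite Pochhammer symbol over subsets $S \subseteq \{0,\dots,j-1\}$ --- which, after grouping by $|S|$, is exactly the $q$-binomial identity \eqref{eq 1} that the paper uses in proving Theorem \ref{th:rat:ab} --- sum the resulting geometric series exactly, and read off the simple pole of each of the $2^j$ closed-form terms at $z=0$. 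In effect you rederive, at the level of $F_{j,k,l}$, the rational-function structure that the paper establishes for $A_1$ and $A_2$ in Theorem \ref{th:rat:ab}, and then extract the asymptotics directly from that exact expression. Your route is more elementary and self-contained: it avoids verifying the sufficient-decay hypotheses of the Euler--Maclaurin framework, it makes explicit the hypothesis $l > 0$ that is only implicit in the paper (and you correctly confirm that $l \geq 2$ in every application, since $l = 2h-4j$ with $j \leq \lceil h/2\rceil - 1$ and $l = 2h-4j-2$ with $j \leq \lfloor h/2\rfloor - 1$), and it records the positivity of the limiting integral, which is what makes the asymptotic equivalence $\sim$ meaningful; the uniformity in cones $D_\theta$ is automatic because $z/(1-e^{-(2|S|+l)z})$ is analytic at $z = 0$. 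What the paper's heavier machinery buys in exchange is a full asymptotic expansion to arbitrary order in a form suited to the effectivity questions raised in the remarks, and a method that still applies when the summand is not a finite product admitting a $2^j$-term expansion.
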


\begin{proof}
We use Proposition \ref{EM Formula} to prove Proposition \ref{F_abc Asymptotics}.  It is not quite obvious how to put $F_{j,k,l}(q)$ into the appropriate form, but this can be fixed by introducing a second auxiliary variable as in \cite{BCM}.

Let $t$ be an arbitrary complex number in some region $D_\theta$ as defined in Proposition \ref{EM Formula}. Define the function
\begin{align*}
	F_{j,k,l}\lp t;z \rp := \sum_{m \geq 0} \lp e^{-2mz -kt}; e^{-2t} \rp_j e^{-lmz}.
\end{align*}
Observe that $F_{j,k,l}\lp z;z \rp = F_{j,k,l}(e^{-z})$. Furthermore, if we define
\begin{align*}
	f_{j,k,l}(t;z) := \lp e^{-2z-kt}; e^{-2t} \rp_j e^{-lz},
\end{align*}
we can see that
\begin{align*}
	F_{j,k,l}(t;z) = \sum_{m \geq 0} f_{j,k,l}(t;mz) = f_{j,k,l}(t;0) + \sum_{m \geq 0} f_{j,k,l}\lp t; (m+1)z \rp.
\end{align*}
Observe now that for any fixed $t$ in $D_\theta$, $f_{j,k,l}(t;z)$ is a holomorphic function of $z$ in $D_\theta$ and can be written as a linear combination of exponential decay functions. Therefore, it is clear that $f_{j,k,l}(t;z)$ satisfies the criterion of Proposition \ref{EM Formula} for all fixed $t \in D_\theta$, and therefore
\begin{align*}
	\sum_{m \geq 0} f_{j,k,l}\lp t;(m+1)z \rp &= \dfrac{1}{z} \int_0^\infty f_{j,k,l}(t;x) dx\\
 &\qquad- \sum_{m \geq 0} \dfrac{f_{j,k,l}^{(m)}(t;0) z^m}{(m+1)!} - \sum_{n=0}^{N-1} \dfrac{B_{n+1}(0) f_{j,k,l}^{(n)}(t;z)}{(n+1)!} z^n\\
 &\qquad- \dfrac{(-1)^N z^{N-1}}{N!} \int_z^{z\infty} f_{j,k,l}^{(N)}(t;w) \widetilde{B}_N\lp \frac{w}{z} - 1 \rp dw.
\end{align*}
Since we restrict to $z \to 0$ in the region $D_\theta$, we obtain
\begin{align*}
	F_{j,k,l}\lp e^{-z} \rp = F_{j,k,l}\lp z;z \rp &= f_{j,k,l}(z;0) + \dfrac{1}{z} \int_0^\infty f_{j,k,l}(z;x) dx\\
 &\qquad- \sum_{m \geq 0} \dfrac{f_{j,k,l}^{(m)}(z;0) z^m}{(m+1)!} - \sum_{n=0}^{N-1} \dfrac{B_{n+1}(0) f_{j,k,l}^{(n)}(z;z)}{(n+1)!} z^n \\ &\qquad- \dfrac{(-1)^N z^{N-1}}{N!} \int_z^{z\infty} f_{j,k,l}^{(N)}(z;w) \widetilde{B}_N\lp \frac{w}{z} - 1 \rp dw.
\end{align*}
Now, observe that $f_{j,k,l}(t;z)$ is holomorphic at $t=0$ and therefore the identification $t = z$ does not introduce any additional singularities at $z=0$. Furthermore, because $f_{j,k,l}(t;z)$ is holomorphic at zero for both the $t$ and $z$ variables, it is easy to see that the only term in this expression which contributes to the principal part as $z \to 0$ is $\frac{1}{z} \int_0^\infty f_{j,k,l}(z;x)dx$, and therefore as $z \to 0$ in $D_\theta$ we obtain
\begin{align*}
	F_{j,k,l}\lp e^{-z} \rp \sim \dfrac{1}{z} \int_0^\infty f_{j,k,l}(z;x) dx.
\end{align*}
It is straightforward to see analytically that $\lim_{z \to 0} \int_0^\infty f_{j,k,l}(z;x) dx = \int_0^\infty f_{j,k,l}(0;x) dx$, and therefore as $z \to 0$ in $D_\theta$ we obtain
\begin{align*}
	F_{j,k,l}\lp e^{-z} \rp \sim \dfrac{1}{z} \int_0^\infty f_{j,k,l}(0;x) dx.
\end{align*}
The result follows.
\end{proof}

We define for convenience the integrals
\begin{align}\label{I_A Definition}
	I_A(j,l) := \int_0^\infty f_{j,k,l}(0;x) dx = \int_0^\infty \lp 1 - e^{-2x} \rp^j e^{-lx} dx.
\end{align}
Observe that the notation $I_A(j,l)$ is sufficient because the resulting integral does not depend on $k$. Combining these calculations with \eqref{a_h first asymptotic}, it follows that for $q = e^{-z}$, as $z \to 0$ in $D_\theta$, we have
\begin{align} \label{a_h second asymptotic}
	\tilde{a}_h\lp q\rp &\sim \dfrac{\lp -q;q \rp_\infty}{z} \Bigg[ \sum_{j=0}^{\lceil h/2\rceil-1} \binom{h-j-1}{j} I_A(j,2h-4j)\nonumber\\
 &\qquad\qquad\qquad+ \sum_{j=0}^{\lfloor h/2\rfloor-1} \binom{h-j-2}{j} I_A(j,2h-4j-2) \Bigg].
\end{align}

We follow a very similar process to estimate $G_{j,k}(q)$, and we obtain the following asymptotic formula.

\begin{proposition}\label{G_ab Asymptotics}
As $z\to0$, we have
\begin{align*}
	G_{j,k}\lp e^{-z} \rp \sim \dfrac{1}{z}\int_0^\infty \dfrac{e^{-jx}}{\lp 1 + e^{-x} \rp^k} dx.
\end{align*}
\end{proposition}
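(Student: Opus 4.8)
The plan is to prove Proposition \ref{G_ab Asymptotics} by mirroring almost exactly the strategy used for Proposition \ref{F_abc Asymptotics}, namely introducing an auxiliary variable $t$ so that the summand acquires the shape $f(\text{something}\cdot z)$ required to apply the Euler--Maclaurin formula in Proposition \ref{EM Formula}. Concretely, I would define
\begin{align*}
	G_{j,k}(t;z) := \sum_{m \geq 0} \dfrac{e^{-jmz}}{\lp -e^{-(m+1)t}; e^{-t} \rp_k},
\end{align*}
chosen so that $G_{j,k}(z;z) = G_{j,k}(e^{-z})$, where I have replaced each occurrence of $q^{m+1} = e^{-(m+1)z}$ in the denominator Pochhammer symbol by $e^{-(m+1)t}$ while keeping the numerator factor $q^{jm} = e^{-jmz}$ tied to the true variable $z$. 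The point of this decoupling is that for fixed $t$ the individual summand
\begin{align*}
	g_{j,k}(t;z) := \dfrac{e^{-jz}}{\lp -e^{-t-z}; e^{-t} \rp_k}
\end{align*}
is, as a function of $z$, a genuine holomorphic function on $D_\theta$ that decays exponentially (it is $e^{-jz}$ times a bounded analytic factor whose denominator is bounded away from $0$), and hence satisfies the sufficient decay hypothesis of Proposition \ref{EM Formula}.

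Next I would apply Proposition \ref{EM Formula} to $\sum_{m\geq0} g_{j,k}(t;(m+1)z)$ for fixed $t \in D_\theta$, writing $G_{j,k}(t;z) = g_{j,k}(t;0) + \sum_{m\geq0} g_{j,k}(t;(m+1)z)$ exactly as in the proof of Proposition \ref{F_abc Asymptotics}. This produces the leading term $\frac{1}{z}\int_0^\infty g_{j,k}(t;x)\,dx$ together with a collection of lower-order terms, each of which is either $O(1)$ or $O(z^n)$ and is holomorphic in $t$ at $t=0$. I would then set $t = z$; the crucial observation, identical to the $F$ case, is that $g_{j,k}(t;z)$ is jointly holomorphic at the origin in both variables, so the specialization $t = z$ introduces no new singularity at $z = 0$ and the only term contributing to the principal (singular) part as $z \to 0$ remains $\frac{1}{z}\int_0^\infty g_{j,k}(z;x)\,dx$. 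Finally, by continuity of the integral in the parameter $t$ I would pass from $g_{j,k}(z;x)$ to $g_{j,k}(0;x)$ inside the integral, using $\lim_{z\to0}\int_0^\infty g_{j,k}(z;x)\,dx = \int_0^\infty g_{j,k}(0;x)\,dx$, and evaluate
\begin{align*}
	g_{j,k}(0;x) = \dfrac{e^{-jx}}{\lp -e^{-0-x}; e^{-0} \rp_k} = \dfrac{e^{-jx}}{\lp -e^{-x}; 1 \rp_k} = \dfrac{e^{-jx}}{\lp 1 + e^{-x} \rp^k},
\end{align*}
which is precisely the claimed integrand. This last step uses that $(a;1)_k = (1-a)^k$, so $(-e^{-x};1)_k = (1+e^{-x})^k$.

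The two points requiring the most care, and which I expect to be the main obstacle, are the verification of sufficient decay and the justification that the $t=z$ specialization does not disturb the principal part. For decay, one must confirm that the denominator $(-e^{-t-z}; e^{-t})_k = \prod_{i=0}^{k-1}(1 + e^{-t-z}e^{-it})$ stays bounded away from zero uniformly on the relevant cone, so that $g_{j,k}(t;z)$ and all its $z$-derivatives decay like the exponential factor $e^{-jz}$ dictates; since each factor of the product tends to $1$ as $z,t \to 0$ and the product is a finite product of entire functions of $z$ bounded below in modulus, this holds, but it should be stated explicitly for a clean application of Proposition \ref{EM Formula}. The holomorphy-at-the-origin argument for the $t=z$ substitution is the genuinely delicate conceptual step, but it is handled verbatim as in the proof of Proposition \ref{F_abc Asymptotics}: because every error term in the Euler--Maclaurin expansion is holomorphic in $(t,z)$ near $(0,0)$, restricting to the diagonal $t=z$ leaves all of them bounded as $z\to0$, so none of them can contribute to the $\frac{1}{z}$ singularity. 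Given the structural identity with the $F_{j,k,l}$ argument, I would keep the exposition brief and simply indicate the parallel wherever the computations coincide.
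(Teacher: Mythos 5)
Your argument is correct and is essentially the paper's own proof: the same auxiliary-variable function $g_{j,k}(t;z)$, the same application of Proposition \ref{EM Formula} at fixed $t$, the same diagonal specialization $t=z$ justified by joint holomorphy at the origin, and the same evaluation $g_{j,k}(0;x)=e^{-jx}/(1+e^{-x})^k$. One slip to fix: your displayed definition $G_{j,k}(t;z)=\sum_{m\geq 0} e^{-jmz}/\lp -e^{-(m+1)t};e^{-t}\rp_k$ is not the function you actually work with, and taken literally it would break the argument, since for fixed $t$ its summand depends on $m$ through both $mz$ and $mt$, hence is not of the form $f_t(mz)$ required by Proposition \ref{EM Formula}. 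The correct decoupling --- the one implicit in your $g_{j,k}$ and used in the paper --- sends $q^{m+1}=q^m\cdot q$ to $e^{-mz}\cdot e^{-t}$, so that $G_{j,k}(t;z)=\sum_{m\geq 0} g_{j,k}(t;mz)=\sum_{m\geq 0} e^{-jmz}/\lp -e^{-t-mz};e^{-t}\rp_k$; with that repair, which makes your stated identity $G_{j,k}(t;z)=g_{j,k}(t;0)+\sum_{m\geq 0}g_{j,k}(t;(m+1)z)$ true, the rest of your proof goes through verbatim.
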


\begin{proof}
As before, we restrict $t,z$ to a region $D_\theta$, $q = e^{-z}$. Consider
\begin{align*}
	g_{j,k}(t;z) := \dfrac{e^{-jz}}{\lp e^{-z-t}; e^{-t} \rp_k}.
\end{align*}
Then we have
\begin{align*}
	G_{j,k}\lp e^{-z} \rp = \sum_{m \geq 0} g_{j,k}(z;mz) = g_{j,k}(z;0) + \sum_{m \geq 0} g_{j,k}\lp z; (m+1)z \rp.
\end{align*}
Using Proposition \ref{EM Formula} with $t$ fixed as in the previous case, we obtain for
\begin{align*}
	G_{j,k}(t;z) := \sum_{m \geq 0} g_{j,k}(t;mz)
\end{align*}
the identity
\begin{align*}
	G_{j,k}(t;z) &= g_{j,k}(t;0) + \dfrac{1}{z} \int_0^\infty g_{j,k}(t;x) dx - \sum_{m \geq 0} \dfrac{g_{j,k}^{(m)}(t;0) z^m}{(m+1)!}\\
 &\quad- \sum_{n=0}^{N-1} \dfrac{B_{n+1}(0) g_{j,k}^{(n)}(t;z)}{(n+1)!} z^n - \dfrac{(-1)^N z^{N-1}}{N!} \int_z^{z\infty} g_{j,j}^{(N)}(t;w) \widetilde{B}_N\lp \frac{w}{z} - 1 \rp dw,
\end{align*}
valid for any $N \geq 1$. As before, the holomorphicity properties of $g_{j,k}(t;z)$ in the $z$ and $t$ variables imply that as $z \to 0$ in $D_\theta$, we have
\begin{align*}
	G_{j,k}\lp e^{-z} \rp \sim G_{j,k}(z;z) \sim \dfrac{1}{z} \int g_{j,k}(z;x) dx \sim \dfrac{1}{z} \int_0^\infty g_{j,k}(0;x) dx.
\end{align*}
The result follows.
\end{proof}

If we define the integrals
\begin{align}\label{I_B Definition}
	I_B(j,k) := \int_0^\infty g_{j,k}(0;x) dx = \int_0^\infty \dfrac{e^{-jx}}{\lp 1 + e^{-x} \rp^k} dx,
\end{align}
then for $q=e^{-z}$, as $z\to0$ in $D_\theta$, we obtain
\begin{align} \label{b_h second asymptotic}
	\tilde{b}_h(q) \sim \dfrac{\lp -q;q \rp_\infty}{z} \sum_{j=0}^{\lceil h/2\rceil-1} \binom{h-j-1}{j} I_B(j+1,h-j).
\end{align}

To summarize our asymptotic analysis so far, we have the following asymptotic formulas for $\tilde{a}_h(q)$ and $\tilde{b}_h(q)$ from Propositions \ref{F_abc Asymptotics} and \ref{G_ab Asymptotics}.

\begin{theorem}\label{a_h and b_h Asymptotic Theorem}
For $q=e^{-z}$, as $z\to0$ we have
\begin{align*}
\tilde{a}_h(q)\sim\frac{\alpha_h}{z}\lp-q;q\rp_\infty\quad\text{and}\quad \tilde{b}_h(q)\sim\frac{\beta_h}{z}\lp-q;q\rp_\infty,
\end{align*}
where $\alpha_h$ and $\beta_h$ are constants defined by
\begin{align}\label{alpha_h_constant}
	\alpha_h := \sum_{j=0}^{\lceil h/2\rceil-1} \binom{h-j-1}{j} I_A(j,2h-4j) + \sum_{j=0}^{\lfloor h/2\rfloor-1} \binom{h-j-2}{j} I_A(j,2h-4j-2)
\end{align}
and
\begin{align}\label{beta_h_constant}
	\beta_h := \sum_{j=0}^{\lceil h/2\rceil-1} \binom{h-j-1}{j} I_B(j+1,h-j),
\end{align}
and $I_A(j,k)$ and $I_B(j,k)$ are defined in \eqref{I_A Definition} and \eqref{I_B Definition}, respectively.
\end{theorem}

Since we have shown in Section \ref{Generating Functions} that $\tilde{a}_h(q)/\lp -q;q \rp_\infty$ and $\tilde{b}_h(q)/\lp -q;q \rp_\infty$ are essentially rational functions (in particular, when $q = e^{-z}$, they only have polar singularities as $z \to 0$), we can now apply Wright's circle method, which we recalled in Subsection \ref{WrightCircleMethod}, 
to obtain an asymptotic expansion for the coefficients $a_h(n)$ and $b_h(n)$. Thus, we are now in a position to prove Theorems \ref{Alpha Asymptotic Theorem} and \ref{Beta Asymptotic Theorem}.

\begin{proof}[Proofs of Theorems \ref{Alpha Asymptotic Theorem} and \ref{Beta Asymptotic Theorem}]
From the modularity of the Dedekind eta function $\eta(z):=q^{1/24}\prod_{n\geq1}\left(1-q^n\right)$, we obtain the transformation law $\eta(-1/z)=\sqrt{-iz}\eta(z)$, from which it is easy to show using Proposition \ref{WrightCircleMethod} (see \cite{BBCFW} for more details) that for $q = e^{-z}$ we have
\begin{align} \label{Odd GF Asymptotic}
    \lp -q;q \rp_\infty = \dfrac{1}{\sqrt{2}} \exp\lp \dfrac{\pi^2}{12z} \rp \lp 1 + O_\delta\lp e^{-\frac{B}{z}} \rp \rp
\end{align}
as $z \to 0$ inside $D_\theta$. If $z=x+iy$, then outside of $D_\theta$ we have that
\begin{align*}
|\xi(q)|\ll_\theta\xi\left(e^{-x}\right)\cdot\exp\left(-\frac{\delta'}{x}\right)
\end{align*}
for some $\delta'>0$.  More specifically, \cite[Lemmas 5.9 and 5.10]{BBCFW} show that $\lp -q;q \rp_\infty$ satisfies hypotheses (H2) and (H4) with $K = \frac{1}{\sqrt{2}}$, $A = \frac{\pi^2}{12}$ for both $\tilde{a}_h(q)$ and $\tilde{b}_h(q)$.  We calculate that
\begin{equation*}
p_0 = \alpha_hc_{0,0} = \alpha_h\frac{3^{1/4}}{\pi \sqrt{2}} \quad\text{and}\quad p_0 = \beta_hc_{0,0} = \beta_h\frac{3^{1/4}}{\pi \sqrt{2}}
\end{equation*} for $\tilde{a}_h(q)$ and $\tilde{b}_h(q)$, respectively. Therefore, we have
\begin{align*}
    a_h(n) \sim \alpha_h \dfrac{3^{1/4}}{2\pi n^{1/4}} e^{\pi \sqrt{\frac{n}{3}}}\quad\text{and}\quad b_h(n) \sim \beta_h \dfrac{3^{1/4}}{2\pi n^{1/4}} e^{\pi \sqrt{\frac{n}{3}}},
\end{align*}
which completes the proofs of the theorems.
\end{proof}

\begin{remark}
Note that in order to compute the constants $N_h$ in Theorem \ref{Main Theorem}, one could use the explicit bounds in \cite{JO}.  To do this, one must first make the asymptotics for the rational functions $\tilde{A}_h(q)$ and $\tilde{B}_h(q)$ from Theorem \ref{th:rat:ab} effective, which could potentially be done by either an effective Taylor theorem or effective Euler--Maclaurin summation as used in \cite{C,JO}.
\end{remark}

We now see that the inequality $a_h(n) > b_h(n)$ will follow for $n \gg 0$ if $\alpha_h > \beta_h$.

\section{Evaluating $\alpha_h$ and $\beta_h$} \label{Constants}

In this section, we complete the proof of Theorem \ref{Main Theorem} by showing that $\alpha_h > \beta_h$ for all $h \geq 2$. Recall that $\alpha_h$ and $\beta_h$ are defined in \eqref{alpha_h_constant} and \eqref{beta_h_constant} respectively, and the integrals $I_A(j,k)$ and $I_B(j,k)$ are defined in \eqref{I_A Definition} and \eqref{I_B Definition}. In order to prove that $\alpha_h > \beta_h$ for all $h \geq 2$, we proceed in stages. First, we produce simpler formulas for $\alpha_h$ and $\beta_h$ which involve harmonic numbers. We then leverage these simpler formulas to prove that $\alpha_h > \beta_h$ for $h \geq 2$. In fact, we will prove something much stronger:

\begin{theorem} \label{Limit Values and Inequality}
    The following are true:
    \begin{itemize}
        \item[(1)] We have $\alpha_h \to \log(2)$ and $\beta_h \to \frac{\log(3)}{2}$ as $h \to \infty$.
        \item[(2)] We have $\alpha_h > \beta_h$ for all $h \geq 2$.
    \end{itemize}
\end{theorem}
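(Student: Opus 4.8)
The plan is to attack the two integrals $I_A(j,l)$ and $I_B(j,k)$ directly, since they are completely explicit. Both integrands are rational functions of $e^{-x}$, so each integral can be evaluated in closed form after a substitution. For $I_A(j,l) = \int_0^\infty (1-e^{-2x})^j e^{-lx}\,dx$, the substitution $u = e^{-2x}$ turns the integral into a Beta-type integral, yielding a ratio of factorials (or equivalently a value expressible via harmonic numbers after partial fractions). For $I_B(j,k) = \int_0^\infty e^{-jx}(1+e^{-x})^{-k}\,dx$, the substitution $u = e^{-x}$ gives $\int_0^1 u^{j-1}(1+u)^{-k}\,du$, which again has a closed form; expanding $(1+u)^{-k}$ or integrating by parts repeatedly produces rational combinations plus a single transcendental contribution involving $\log 2$. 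The first concrete step is therefore to record these evaluations and, crucially, to reindex the sums $\alpha_h$ and $\beta_h$ so that the binomial-weighted sums telescope or collapse into expressions built from harmonic numbers $H_n = \sum_{i=1}^n 1/i$. I expect $\alpha_h$ and $\beta_h$ each to reduce to a short harmonic-number expression.

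Next I would prove part (1), the limits. Once $\alpha_h$ and $\beta_h$ are expressed through harmonic numbers, the limits should follow from the classical asymptotic $H_n = \log n + O(1)$ together with the telescoping structure. More precisely, I anticipate that the binomial sums defining $\alpha_h$ are a disguised form of a partial sum of a series converging to $\int_0^1 \frac{du}{1+u} = \log 2$, and that the sums defining $\beta_h$ converge to $\tfrac12\int_0^1 \frac{du}{?}$ evaluating to $\frac{\log 3}{2}$; the factor $3$ versus $2$ is the signature of the odd-versus-distinct asymmetry and should emerge from the $q^2$-binomial structure in $\alpha_h$ (governed by the region $A$ generating function $1/(q;q^2)_\infty$, a weight $\sim 3$ phenomenon) against the $q$-binomial structure in $\beta_h$. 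The cleanest route is to identify a generating function in an auxiliary variable whose partial sums are exactly $\alpha_h$ and $\beta_h$, then pass to the limit.

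For part (2), the inequality $\alpha_h > \beta_h$ for all $h \ge 2$, I would compare the harmonic-number formulas term by term. The natural strategy is to show that the difference $\alpha_h - \beta_h$ is a sum of manifestly positive quantities, or to set up an induction on $h$ in which the increment $\alpha_{h+1}-\alpha_h$ dominates $\beta_{h+1}-\beta_h$. Since the limits give $\log 2 \approx 0.693 > \tfrac{\log 3}{2} \approx 0.549$, the inequality holds in the limit with room to spare; the content is to rule out small $h$ and any non-monotone behavior. I would first verify $h = 2, 3$ by the explicit values already recorded in the excerpt (where $\gamma_2 = \tfrac32$ and $\gamma_3$ are given), then handle $h \ge 4$ uniformly.

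The main obstacle I anticipate is the reindexing and simplification in the first step: the sums defining $\alpha_h$ and $\beta_h$ mix $q^2$-binomials with $q$-binomials and have ranges $\lceil h/2\rceil - 1$ and $\lfloor h/2\rfloor - 1$, so collapsing them into clean harmonic-number expressions will require a careful combinatorial identity (likely a Vandermonde-type or Abel-type summation) rather than a routine calculation. If the two families do not align into a single comparable form, the term-by-term comparison in part (2) could fail and one would instead need a more delicate estimate of the difference $\alpha_h - \beta_h$, for instance by bounding the tail of each as an integral and showing the gap stays bounded below away from $0$.
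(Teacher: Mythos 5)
Your proposal follows essentially the same route as the paper's proof: evaluate $I_A$ and $I_B$ in closed form via the substitutions you describe, collapse the binomial-weighted sums using the classical identity $\sum_{k=0}^{n}\binom{n}{k}\frac{(-1)^k}{m+k}=\frac{1}{m\binom{n+m}{m}}$ (the paper's Lemma \ref{Polynomial Lemma}), obtain the limits from $H_n=\log n+\gamma+o(1)$, and settle part (2) by monotonicity of $\alpha_h$ and of $\beta_h$ within each parity class together with the explicit values $\alpha_2=\tfrac34>\tfrac12=\beta_2$ and $\alpha_3=\tfrac23>\log(2)-\tfrac18=\beta_3$. The one place reality differs slightly from your expectation is that $\beta_h$ collapses not to a harmonic-number expression but to partial sums of $\sum_{c\geq 0}\frac{1}{(2c+1)2^{2c+1}}=\frac{\log 3}{2}$ (plus a vanishing integral term when $h$ is odd), which requires exactly the kind of additional binomial summation identity you anticipated needing.
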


Observe that Theorems \ref{Alpha Asymptotic Theorem}, \ref{Beta Asymptotic Theorem}, and \ref{Limit Values and Inequality} together imply Theorem \ref{Inequality Theorem}, which then completes the proof of Theorem \ref{Main Theorem}. Therefore, we focus the remainder of this section on the proof of Theorem \ref{Limit Values and Inequality}.

To prove this theorem, we prove a sequence of lemmas that give successively simpler values for $\alpha_h$ and $\beta_h$. To help with simplifying $\alpha_h$, we define
\begin{align} \label{Alpha-tilde definition}
    \widetilde \alpha_h := \sum_{j=0}^{\lfloor \frac{h-1}{2} \rfloor}\sum_{k=0}^j {h-j-1\choose j}{j\choose k}\frac{(-1)^k}{h-2j+k}.
\end{align}

\begin{lemma} \label{Alpha Integrals Lemma}
    For any integer $h \geq 1$, we have
    \begin{align*}
        \alpha_h = \dfrac{\widetilde \alpha_h + \widetilde \alpha_{h-1}}{2}.
    \end{align*}
\end{lemma}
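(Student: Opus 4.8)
The plan is to start from the definition
\begin{align*}
	\alpha_h = \sum_{j=0}^{\lceil h/2\rceil-1} \binom{h-j-1}{j} I_A(j,2h-4j) + \sum_{j=0}^{\lfloor h/2\rfloor-1} \binom{h-j-2}{j} I_A(j,2h-4j-2)
\end{align*}
and first obtain a closed form for the integral $I_A(j,l) = \int_0^\infty (1-e^{-2x})^j e^{-lx}\,dx$. Expanding $(1-e^{-2x})^j = \sum_{k=0}^j \binom{j}{k}(-1)^k e^{-2kx}$ by the binomial theorem and integrating term by term gives
\begin{align*}
	I_A(j,l) = \sum_{k=0}^j \binom{j}{k}(-1)^k \int_0^\infty e^{-(l+2k)x}\,dx = \sum_{k=0}^j \binom{j}{k}\frac{(-1)^k}{l+2k}.
\end{align*}
This converts each $I_A$ into a finite sum of the harmonic-number type appearing in $\widetilde\alpha_h$.

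Next I would substitute the two relevant specializations. For the first sum, $l = 2h-4j$, so $l+2k = 2h-4j+2k = 2(h-2j+k)$, giving $I_A(j,2h-4j) = \tfrac12 \sum_{k=0}^j \binom{j}{k}\frac{(-1)^k}{h-2j+k}$. For the second sum, $l = 2h-4j-2$, so $l+2k = 2((h-1)-2j+k)$, giving $I_A(j,2h-4j-2) = \tfrac12 \sum_{k=0}^j \binom{j}{k}\frac{(-1)^k}{(h-1)-2j+k}$. Pulling out the common factor $\tfrac12$, the first outer sum becomes exactly $\tfrac12$ times the inner double-sum in $\widetilde\alpha_h$, since $\binom{h-j-1}{j}$ matches the binomial in \eqref{Alpha-tilde definition} and the summation range $0 \le j \le \lceil h/2\rceil - 1$ agrees with $0 \le j \le \lfloor (h-1)/2\rfloor$. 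Similarly, the second outer sum becomes $\tfrac12$ times the inner double-sum defining $\widetilde\alpha_{h-1}$: here $\binom{h-j-2}{j} = \binom{(h-1)-j-1}{j}$ and the denominator $(h-1)-2j+k$ both match the $h\mapsto h-1$ version of \eqref{Alpha-tilde definition}, while the range $0\le j\le \lfloor h/2\rfloor - 1 = \lfloor (h-1-1)/2\rfloor \cdot(\text{up to parity})$ must be checked to coincide with $0\le j\le\lfloor(h-2)/2\rfloor$. Combining yields $\alpha_h = \tfrac12(\widetilde\alpha_h + \widetilde\alpha_{h-1})$.

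The main obstacle I anticipate is purely bookkeeping at the summation boundaries: verifying that the upper limits $\lceil h/2\rceil - 1$ and $\lfloor h/2\rfloor - 1$ in the definition of $\alpha_h$ align exactly with $\lfloor(h-1)/2\rfloor$ and $\lfloor(h-2)/2\rfloor$ in $\widetilde\alpha_h$ and $\widetilde\alpha_{h-1}$, across both parities of $h$. One should confirm that whenever a binomial coefficient $\binom{h-j-1}{j}$ or $\binom{h-j-2}{j}$ vanishes for a boundary index $j$, this accounts for any apparent discrepancy in the stated ranges, so that extending or truncating the sums by such vanishing terms is harmless. Apart from this index-chasing, the identity is a direct consequence of the elementary evaluation of $I_A(j,l)$, and I would finish by assembling the two halves into the claimed average.
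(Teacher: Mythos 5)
Your proposal is correct and follows essentially the same route as the paper: evaluate $I_A(j,l)$ by binomial expansion and termwise integration (the paper does the equivalent computation after substituting $u=e^{-2x}$), then identify the first sum in $\alpha_h$ with $\widetilde\alpha_h/2$ and the second with $\widetilde\alpha_{h-1}/2$. The boundary bookkeeping you flag as the main obstacle is in fact vacuous, since $\lceil h/2\rceil - 1 = \lfloor (h-1)/2\rfloor$ and $\lfloor h/2\rfloor - 1 = \lfloor (h-2)/2\rfloor$ hold exactly for every integer $h$, so the summation ranges coincide term by term with no vanishing-binomial argument needed.
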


\begin{proof}
    By checking the changes induced by taking $h \mapsto h+1$, the result will follow from the definition of $\alpha_h$ if we can show that
    \begin{align*}
        \dfrac{\widetilde \alpha_h}{2} = \sum_{j=0}^{\lfloor \frac{h-1}{2} \rfloor} \binom{h-j-1}{j} I_A(j,2h-4j),
    \end{align*}
    which in turn follows if we prove that
    \begin{align*}
        I_A\lp j,2h-4j \rp = \dfrac{1}{2} \sum_{k=0}^j \binom{j}{k} \dfrac{(-1)^k}{h-2j+k}.
    \end{align*}
    Now, from the definition of $I_A(j,l)$ and by the substitution $u = e^{-2x}$, we have
    \begin{align*}
        I_A\lp j, 2h-4j \rp = \int_0^\infty \lp 1 - e^{-2x} \rp^a e^{-2\lp h-2j \rp x} dx = \dfrac{(-1)^j}{2} \int_0^1 \lp u-1 \rp^j u^{h-2j-1} du.
    \end{align*}
    The result then follows by expanding $\lp u-1 \rp^j$ with the binomial theorem and integrating.
\end{proof}

\begin{lemma} \label{Beta Integrals Lemma}
    Let $h \geq 1$ be a positive integer. Then we have
    \begin{align*}
        \beta_h = \sum_{j=0}^{\frac{h-2}{2}} \binom{h-j-1}{j} \sum_{k=0}^j \binom{j}{k} \dfrac{(-1)^k}{h-2j+k-1}\lp 1 - \dfrac{1}{2^{h-2j+k-1}} \rp,
    \end{align*}
    whenever $h$ is even, and
    \begin{align*}
        \beta_h = \log(2) &+ \sum_{j=0}^{\frac{h-1}{2}} \binom{h-j-1}{j} \sum_{k=1}^j \binom{j}{k} \dfrac{(-1)^k}{h-2j+k-1}\lp 1 - \dfrac{1}{2^{h-2j+k-1}} \rp \\ &\qquad+ \sum_{j=0}^{\frac{h-3}{2}} \binom{h-j-1}{j} \dfrac{1}{h-2j-1}\lp 1 - \dfrac{1}{2^{h-2j-1}} \rp,
    \end{align*}
    whenever $h$ is odd.
\end{lemma}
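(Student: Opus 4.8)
The plan is to reduce the entire statement to a single integral evaluation, namely a closed form for $I_B(j+1,h-j)$, and then assemble the pieces. First I would simplify $I_B(j+1,h-j)$ by applying the substitution $u = e^{-x}$ in the definition \eqref{I_B Definition}. This turns $\int_0^\infty e^{-(j+1)x}\left(1+e^{-x}\right)^{-(h-j)}\,dx$ into the rational integral
$$I_B(j+1,h-j) = \int_0^1 \frac{u^j}{(1+u)^{h-j}}\,du.$$
This is the crucial reduction; everything afterward is bookkeeping.

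Next, to evaluate this integral I would write $u = (1+u)-1$ and expand via the binomial theorem as $u^j = \left((1+u)-1\right)^j = \sum_{k=0}^j \binom{j}{k}(-1)^k (1+u)^{j-k}$, so that
$$I_B(j+1,h-j) = \sum_{k=0}^j \binom{j}{k}(-1)^k \int_0^1 (1+u)^{2j-k-h}\,du.$$
Setting $p := h-2j+k-1$, the exponent is $2j-k-h = -(p+1)$, and a direct antiderivative gives that the inner integral equals $\frac{1}{p}\left(1 - 2^{-p}\right)$ when $p \neq 0$ and equals $\log 2$ when $p = 0$. Multiplying through by $\binom{h-j-1}{j}$ and summing over $j$ recovers the announced formulas, provided one correctly locates where the logarithmic ($p=0$) term arises.

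The only genuine case analysis is in pinning down the exceptional term $p = 0$. Since $j$ runs over $0 \le j \le \lceil h/2\rceil - 1$ and $0 \le k \le j$, the equation $h - 2j + k - 1 = 0$ forces $k = 2j - h + 1$, which together with $0 \le k \le j$ determines $j$ and $k$ completely. When $h$ is even I would check that no admissible pair $(j,k)$ satisfies $p = 0$, so every inner integral is of the rational type $\frac{1}{p}\left(1-2^{-p}\right)$, and the even formula follows at once with upper index $\lceil h/2\rceil - 1 = (h-2)/2$. When $h$ is odd the unique solution is $(j,k) = \left(\frac{h-1}{2},0\right)$; here the coefficient $\binom{h-j-1}{j}\binom{j}{0}$ collapses to $\binom{(h-1)/2}{(h-1)/2} = 1$, so this single term contributes exactly the isolated $\log 2$.

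Finally I would reorganize the surviving rational terms in the odd case by separating the $k = 0$ contributions from the $k \ge 1$ contributions. The $k \ge 1$ terms over $0 \le j \le \frac{h-1}{2}$ assemble into the first double sum, while the $k = 0$ terms over $0 \le j \le \frac{h-3}{2}$ (that is, all $j$ except the one producing $\log 2$) give the last sum, with $\binom{j}{0} = (-1)^0 = 1$ and exponent $h - 2j - 1$. I expect the main obstacle to be purely organizational, namely keeping the index ranges straight so that precisely one pair $(j,k)$ is peeled off as $\log 2$ while the remaining terms land exactly in the two stated sums; no analytic difficulty remains once the integral evaluation above is in hand.
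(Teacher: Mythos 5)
Your proposal is correct and follows essentially the same route as the paper: reduce $\beta_h$ to the integrals $I_B(j+1,h-j)$, evaluate them by a binomial expansion into elementary integrals $\frac{1}{p}\bigl(1-2^{-p}\bigr)$ with the exceptional $p=0$ term giving $\log 2$, and then track exactly when $(j,k)$ can satisfy $h-2j+k-1=0$ (only for odd $h$, at $(j,k)=(\frac{h-1}{2},0)$). The only cosmetic difference is that the paper substitutes $u=1+e^{-x}$ in one step, whereas you substitute $u=e^{-x}$ and then write $u=(1+u)-1$; the resulting computation is identical.
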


\begin{proof}
    From the definition of $I_B(j,k)$, we have
    \begin{align*}
        \beta_h = \sum_{j=0}^{\lfloor \frac{h-1}{2} \rfloor} \binom{h-j-1}{j} \int_0^\infty \dfrac{e^{-(j+1)x}}{\lp 1 + e^{-x} \rp^{h-j}} dx.
    \end{align*}
    Using the substitution $u = 1 + e^{-x}$ and expanding the resulting power of $(u-1)$ with the binomial theorem, we have
    \begin{align*}
        \int_0^\infty \dfrac{e^{-(j+1)x}}{\lp 1 + e^{-x} \rp^{h-j}} dx = \int_1^2 \dfrac{\lp u - 1 \rp^j}{u^{h-j}} du = \sum_{k=0}^j (-1)^k \binom{j}{k} \int_1^2 u^{2j-h-k} du.
    \end{align*}
    Now, considering the limitations $0 \leq k \leq j$ and $0 \leq j \leq \lfloor \frac{h-1}{2} \rfloor$, we have $-h \leq 2j-h-k \leq -1$, with equality to $-1$ if and only if $j = \frac{h-1}{2}$ and $k = 0$. Note that this scenario is only possible if $h$ is odd. Thus, we have
    \begin{align*}
        \int_1^2 u^{2j-h-k}du = \begin{cases}
            \dfrac{1}{h-2j+k-1}\lp 1 - \dfrac{1}{2^{h-2j+k-1}} \rp & \text{if } \lp j, k \rp \not = \lp \frac{h-1}{2}, 0 \rp, \\ \log(2) & \text{if } \lp j, k \rp = \lp \frac{h-1}{2}, 0 \rp.
        \end{cases}
    \end{align*}
    It is therefore convenient to split into cases. Firstly, if $h$ is even, then the $\log(2)$ term does not emerge and we obtain
    \begin{align*}
        \beta_h = \sum_{j=0}^{\frac{h-2}{2}} \binom{h-j-1}{j} \sum_{k=0}^j \binom{j}{k} \dfrac{(-1)^k}{h-2j+k-1}\lp 1 - \dfrac{1}{2^{h-2j+k-1}} \rp.
    \end{align*}
    If $h$ is odd on the other hand, we must isolate the term $\lp j,k \rp = \lp \frac{h-1}{2}, 0 \rp$, and so we obtain
    \begin{align*}
        \beta_h = \log(2) &+ \sum_{j=0}^{\frac{h-1}{2}} \binom{h-j-1}{j} \sum_{k=1}^j \binom{j}{k} \dfrac{(-1)^k}{h-2j+k-1}\lp 1 - \dfrac{1}{2^{h-2j+k-1}} \rp \\ &\qquad+ \sum_{j=0}^{\frac{h-3}{2}} \binom{h-j-1}{j} \dfrac{1}{h-2j-1}\lp 1 - \dfrac{1}{2^{h-2j-1}} \rp.
    \end{align*}
    This completes the proof.
\end{proof}

Lemmas \ref{Alpha Integrals Lemma} and \ref{Beta Integrals Lemma} simplify our considerations to the evaluation of linear combinations of merely rational numbers. We will analyze these finite sums using the following lemmas studying related polynomials.

\begin{lemma} \label{Polynomial Lemma}
    Define the polynomials
    \begin{align*}
        F_{n,m}(x) := \sum_{k=0}^n\binom{n}{k}\frac{(-1)^k x^{m+k}}{m+k}, \quad R_n(x) := \sum_{k=1}^{n}{n\choose k}\frac{(-1)^k x^k}{k}.
    \end{align*}
    Then the following are true:
    \begin{itemize}
        \item[(1)] We have $$F_{n,m}(x) = \frac{x^m \sum_{j=0}^{n+m-1} (1-x)^{n-j}  \binom{n+m-1-j}{m-1}}{m\binom{n+m}{m}}.$$
        \item[(2)] We have $$R_n(x) = \int_0^x \dfrac{(1-t)^n-1}{t} dt.$$
    \end{itemize}
\end{lemma}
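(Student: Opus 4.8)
The plan is to reduce both identities to elementary single-variable integral evaluations, exploiting the fact that a monomial coefficient $x^p/p$ records an integral $\int_0^x t^{p-1}\,dt$.

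For part~(2) I would start from $\frac{x^k}{k} = \int_0^x t^{k-1}\,dt$ and interchange the (finite) sum with the integral, obtaining
$$R_n(x) = \int_0^x \frac{1}{t}\sum_{k=1}^n \binom{n}{k}(-t)^k\,dt.$$
By the binomial theorem the inner sum equals $(1-t)^n - 1$ (the absent $k=0$ term supplying the $-1$), and since $(1-t)^n - 1 = -nt + O(t^2)$ the integrand extends continuously to $t=0$; this is exactly the claimed formula. I expect this step to be entirely routine.

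For part~(1) the same device gives the clean integral representation
$$F_{n,m}(x) = \int_0^x t^{m-1}\sum_{k=0}^n \binom{n}{k}(-t)^k\,dt = \int_0^x t^{m-1}(1-t)^n\,dt,$$
an incomplete Beta integral. Rather than expanding this integral, I would verify the proposed closed form by differentiation: calling the right-hand side $H(x)$, it suffices to check $H(0)=0$ and $H'(x) = x^{m-1}(1-x)^n$, since $F_{n,m}$ satisfies the same two conditions and the Fundamental Theorem of Calculus then forces $H = F_{n,m}$. The equality $H(0)=0$ is immediate from the explicit prefactor $x^m$ (here $m\geq 1$).

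The substantive step, and the one I expect to be the real obstacle, is confirming $H'(x) = x^{m-1}(1-x)^n$. My plan is to substitute $y = 1-x$ and reindex the defining sum by $r = n-j$, which replaces $\binom{n+m-1-j}{m-1}$ by $\binom{m-1+r}{r}$ and lets $r$ range over $0 \le r \le n$. Differentiating $H = \frac{x^m}{m\binom{n+m}{m}}\sum_r \binom{m-1+r}{r}y^r$ by the product rule and dividing through by $x^{m-1}$ reduces the target to
$$\frac{1}{m\binom{n+m}{m}}\left[ m\sum_{r=0}^n \binom{m-1+r}{r}y^r - (1-y)\sum_{r=1}^n r\binom{m-1+r}{r}y^{r-1} \right] = y^n.$$
The key is then a pair of absorption identities, $(m+r)\binom{m-1+r}{r} = m\binom{m+r}{r}$ and $r\binom{m-1+r}{r} = m\binom{m+r-1}{r-1}$, which collapse the bracket to the telescoping difference $m\big(\sum_{r=0}^n \binom{m+r}{r}y^r - \sum_{r=0}^{n-1}\binom{m+r}{r}y^r\big) = m\binom{m+n}{n}y^n$; since $m\binom{n+m}{m} = m\binom{m+n}{n}$, dividing leaves precisely $y^n = (1-x)^n$. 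The only care required is bookkeeping in the reindexing and checking that the endpoints of the two sums align so the telescoping is exact — no idea beyond recognizing these Vandermonde-type absorptions is needed.
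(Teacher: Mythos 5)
Your proof is correct, but for part (1) it takes a genuinely different route from the paper's. Both arguments begin identically: term-by-term integration (equivalently, differentiation) reduces everything to the incomplete Beta integral $F_{n,m}(x)=\int_0^x t^{m-1}(1-t)^n\,dt$, and part (2) is handled this same routine way in both, so there the two proofs coincide. For (1), however, the paper \emph{derives} the closed form rather than verifying it: it computes the bivariate exponential generating function
$\sum_{m\geq1}\sum_{n\geq0}F_{n,m}(x)\frac{u^n}{n!}\frac{v^{m-1}}{(m-1)!}=\int_0^x e^{vt}e^{(1-t)u}\,dt=\frac{1}{v-u}\bigl(e^{xv-xu+u}-e^u\bigr)$
and then extracts the coefficient of $u^nv^{m-1}$. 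Your argument instead checks that the claimed right-hand side $H$ satisfies $H(0)=0$ and $H'(x)=x^{m-1}(1-x)^n$; your two absorption identities $(m+r)\binom{m-1+r}{r}=m\binom{m+r}{r}$ and $r\binom{m-1+r}{r}=m\binom{m+r-1}{r-1}$ are correct, and the telescoping they produce does collapse the bracket to $m\binom{m+n}{n}y^n$, which cancels against $m\binom{n+m}{m}$ exactly as claimed. Your route is more elementary and self-contained (no generating functions, no coefficient extraction), at the cost of being a verification that presupposes the formula; the paper's route is longer but shows where the formula comes from. One bookkeeping point you should make explicit: the lemma's sum runs over $0\leq j\leq n+m-1$, so your reindexing $r=n-j$ formally produces negative values of $r$; those terms vanish because $\binom{n+m-1-j}{m-1}=0$ whenever $j>n$, and this is what justifies restricting to $0\leq r\leq n$ before differentiating.
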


\begin{proof}
    We first observe that
    \begin{align}\label{FnmIntermediate}
        F_{n,m}^\prime(x) = \sum_{k=0}^n \binom{n}{k} (-1)^k x^{m+k-1} = x^{m - 1} \sum_{k=0}^n \binom{n}{k} (-1)^k x^k = x^{m-1}(1-x)^n.
    \end{align}
    To prove (1), we compute the generating function for $F_{n,m}(x)$. We have by \eqref{FnmIntermediate} that
    \begin{align*}
        \sum_{m\geq 1}\sum_{n\geq 0} F_{n,m}(x) \frac{u^n}{n!}\frac{ v^{m-1}}{(m-1)!}
	  &= \sum_{m\geq 1}\sum_{n\geq 0} \frac{u^n}{n!}\frac{ v^{m-1}}{(m-1)!}
        \int_{0}^x t^{m-1}(1-t)^n dt.
    \end{align*}
    By swapping the order of summation and integration and using the series for $e^t$, we may simplify:
    \begin{align*}
	  \sum_{m\geq 1}\sum_{n\geq 0} F_{n,m}(x) \frac{u^n}{n!}\frac{ v^{m-1}}{(m-1)!} &=       \int_{0}^x e^{vt}e^{(1-t)u} dt\\
	  &= \int_{0}^x e^{u+ t(v-u)} dt\\
	  &= \sum_{k\geq 1} \frac{ (xv-xu+u)^k - u^k}{(v-u) k!}\\
	  &= x\sum_{k\geq 1} \frac{\sum_{j=0}^{k-1} (xv-xu+u)^{k-1-j} u^j }{k!}.
    \end{align*}
    Therefore, using the notation $[x^n] F(x)$ to denote the coefficient of $x^n$ in the expression $F(x)$, we obtain
    \begin{align*}
        F_{n,m}(x) 
	  &= [u^n v^{m-1}]\ n!(m-1)! x\sum_{k\geq 1} \frac{\sum_{j=0}^{k-1} (xv-xu+u)^{k-1-j}     u^j }{k!}\\
	  &= [u^n v^{m-1}]\ n!(m-1)! x\sum_{k\geq 1} \frac{\sum_{j=0}^{k-1} (xv)^{m-1}((1-        x)u)^{k-j-m} \binom{k-1-j}{m-1}  u^j }{k!}\\
	  &= [u^n ]\ n!(m-1)!x^m\sum_{k\geq 1} \frac{\sum_{j=0}^{k-1} (1-x)^{k-j-m} u^{k-m} 
        \binom{k-1-j}{m-1}   }{k!}\\
	  &= n!(m-1)! x^m \frac{\sum_{j=0}^{n+m-1} (1-x)^{n-j}  \binom{n+m-1-j}{m-1}}{(n+m)!}\\
	  &= \frac{x^m \sum_{j=0}^{n+m-1} (1-x)^{n-j}  \binom{n+m-1-j}{m-1}   }{m\binom{n+m}{m}}
    \end{align*}
    This completes the proof of (1). To prove (2), it is enough to observe that
    \begin{equation*}
        R_n^\prime(x) = \sum_{k=1}^{n}{n\choose k}{(-1)^k}x^{k-1} = \dfrac{(1-x)^n-1}{x}.\qedhere
    \end{equation*}
\end{proof}

We also require the following lemma for evaluating a different type of summation.

\begin{lemma} \label{Summation identity}
    For $n=2m$ even, we have
    \begin{align*}
        \sum_{k=0}^m \dfrac{1}{n-2k+1} \binom{n-k}{k}{2^k} = \dfrac{1}{n+1} \lp {2^{n+1} -1} \rp.
    \end{align*}
    For $n=2m+1$ odd, we have
    \begin{align*}
        \sum_{k=0}^m \dfrac{1}{n-2k+1} \binom{n-k}{k} {2^k} = \dfrac{1}{n+1} \lp {2^{m+1} -1} \rp^2.
    \end{align*}
\end{lemma}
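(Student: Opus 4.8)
The plan is to recognize the factor $\frac{1}{n-2k+1}$ as the integral $\int_0^1 t^{n-2k}\,dt$, which converts the sum into an integral of a polynomial whose generating function in $n$ is elementary. Writing $S_n$ for the left-hand side (in either parity case), I set
\begin{align*}
    P_n(t) := \sum_{k} \binom{n-k}{k} 2^k\, t^{n-2k},
\end{align*}
so that $S_n = \int_0^1 P_n(t)\,dt$. The main computation is the generating function of the $P_n(t)$: reindexing by $j = n-2k \geq 0$ and using $\binom{n-k}{k} = \binom{k+j}{k}$ together with the standard identity $\sum_{k,j\geq 0}\binom{k+j}{k} a^k b^j = \frac{1}{1-a-b}$ with $a = 2w^2$ and $b = tw$, I obtain
\begin{align*}
    \sum_{n\geq 0} P_n(t)\, w^n = \frac{1}{1 - tw - 2w^2}.
\end{align*}

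Next I would integrate in $t$ over $[0,1]$, interchanging the formal sum over $n$ with the integral (legitimate since each $P_n(t)$ is a polynomial and the integral acts coefficientwise). This gives
\begin{align*}
    \sum_{n\geq 0} S_n\, w^n = \int_0^1 \frac{dt}{1 - 2w^2 - tw} = -\frac{1}{w}\log\frac{1-w-2w^2}{1-2w^2}.
\end{align*}
Using the factorization $1 - w - 2w^2 = (1-2w)(1+w)$, I split the logarithm as $\log(1-2w) + \log(1+w) - \log(1-2w^2)$ and expand each piece as a power series in $w$. Extracting the coefficient of $w^n$ (that is, reading off the $w^{n+1}$ terms of the logarithms and dividing by $w$) yields
\begin{align*}
    S_n = \frac{2^{n+1}}{n+1} - \frac{(-1)^n}{n+1} - \varepsilon_n\,\frac{2^{(n+3)/2}}{n+1},
\end{align*}
where $\varepsilon_n = 1$ when $n$ is odd and $\varepsilon_n = 0$ when $n$ is even, the last term arising from $\log(1-2w^2)$, whose series lives on even powers of $w$ and hence contributes only at odd $n$.

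Finally I would separate the parity cases. For $n = 2m$ even, $(-1)^n = 1$ and the final term vanishes, giving $S_n = \frac{2^{n+1}-1}{n+1}$. For $n = 2m+1$ odd, $(-1)^n = -1$ and $2^{(n+3)/2} = 2^{m+2}$, so
\begin{align*}
    S_n = \frac{2^{n+1} - 2^{m+2} + 1}{n+1} = \frac{(2^{m+1}-1)^2}{n+1},
\end{align*}
which is exactly the claimed formula. I expect the only real obstacle to be bookkeeping: correctly tracking which logarithmic series contributes to each power of $w$, and verifying that the $\log(1-2w^2)$ contribution lands precisely on the odd indices with the right power of two, so that the odd-case expression collapses into the perfect square $(2^{m+1}-1)^2$. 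Everything else is routine power-series manipulation.
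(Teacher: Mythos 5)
Your proof is correct, but it takes a genuinely different route from the paper's. Both arguments ultimately rest on the same bivariate generating function: your identity $\sum_{n} P_n(t)\,w^n = \frac{1}{1-tw-2w^2}$ specializes at $t=1$ to $\sum_{n}\sum_{k}\binom{n-k}{k}2^k x^n = \frac{1}{1-x-2x^2} = \frac{1}{3}\left(\frac{2}{1-2x}+\frac{1}{1+x}\right)$, which is exactly the identity that opens the paper's proof. The difference is in how the weight $\frac{1}{n-2k+1}$ is handled. You treat it analytically, writing $\frac{1}{n-2k+1}=\int_0^1 t^{n-2k}\,dt$ and integrating the generating function to obtain the closed form $-\frac{1}{w}\log\frac{(1-2w)(1+w)}{1-2w^2}$, from which both parity cases fall out simultaneously by coefficient extraction. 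The paper instead stays purely finitary: it splits $\frac{1}{n-2k+1}=\frac{1}{n+1}\left(1+\frac{2k}{n-2k+1}\right)$, uses the absorption identity $\frac{k}{n-2k+1}\binom{n-k}{k}=\binom{n-k}{k-1}$ to reduce the weighted correction term $T(n)$ to the unweighted sum at index $n-1$, and then evaluates case by case. Your approach buys uniformity and transparency --- a single logarithmic generating function makes it visible at a glance why the odd case collapses to the perfect square $(2^{m+1}-1)^2$, namely as the cross term contributed by $\log(1-2w^2)$ --- at the cost of a (routine, and adequately flagged) justification for interchanging the sum over $n$ with the integral over $t$. The paper's approach buys complete elementarity, needing nothing beyond binomial identities and geometric series, at the cost of an earlier parity split and a two-step reduction.
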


\begin{proof}
    Firstly, we observe the fact that
    \begin{align*}
        \sum_{n\geq 0} \sum_{k=0}^n \binom{n-k}{k} {2^k} x^n = \sum_{k\geq 0} {(2x^2)^k}\sum_{n\geq 2k} \binom{n-k}{k}  x^{n -2k} &= \sum_{k\geq 0} {(2x^2)^k} \frac{1}{(1-x)^{k+1}} \\ &= \dfrac{1}{3}\lp \frac{2}{1-2x} +  \frac{1}{1+x} \rp.
    \end{align*}
    This generating function identity implies that
    \begin{align*}
        \sum_{n\geq 0} \sum_{k=0}^n \binom{n-k}{k} {2^k} = \dfrac{2^{n+1} - (-1)^{n+1}}{3}.
    \end{align*}
    To continue the proof, consider
    \begin{align*}
        S(m) := \sum_{k=0}^m \frac{1}{n-2k+1} \binom{n-k}{k} {2^k}.
    \end{align*}
    Using the previous identity, we have
    \begin{align*}
        S(n) &= \frac{1}{n+1} \sum_{k=0}^m  \binom{n-k}{k} {2^k} +\frac{1}{n+1} \sum_{k=0}^m \frac{ 2k}{n-2k+1} \binom{n-k}{k} {2^k}\\
        &=\frac{2^{n+1} -(-1)^{n+1}}{3(n+1)} + \frac{2 T(n)}{n+1},
    \end{align*}
    where (for $n = 2m$ or $2m+1$)
    \begin{align*}
        T(n) &= \sum_{k=0}^m \frac{ k}{n-2k+1} \binom{n-k}{k} {2^k} = \sum_{k=1}^m \binom{n-k}{k-1} {2^k} \\ &= 2 \sum_{k=1}^m \binom{n-1-(k-1)}{k-1} {2^{k-1}} = 2 \sum_{k=0}^{m-1} \binom{n-1-k}{k} {2^{k}}.
    \end{align*}
    If $n = 2m$ is even, we have
    \begin{align*}
        T(n) = 2\lp \frac{2^{n} -(-1)^{n}}{3} \rp,
    \end{align*}
    and if $n = 2m+1$ is odd, we have
    \begin{align*}
        T(n) = 2 \lp \sum_{k=0}^{m} \binom{n-1-k}{k} 2^k - 2^m \rp = 2\lp \frac{2^{n} -(-1)^{n}}{3} \rp - {2^{m+1}}.
    \end{align*}
    Thus, if $n = 2m$ is even, we have
    \begin{align*}
        S(n)
        &=\frac{2^{n+1} -(-1)^{n+1}}{3(n+1)}  + \frac{2T(n)}{n+1} \\
        &= \frac{2^{n+1} -(-1)^{n+1}}{3(n+1)}  + \frac{4}{n+1} \frac{2^{n} -(-1)^{n}}{3}
        =\frac{1}{n+1}\left( {2^{n+1} -1} \right),
    \end{align*}
    and if $n = 2m+1$ is odd, we have
    \begin{align*}
        S(n)
        &=\frac{2^{n+1} -(-1)^{n+1}}{3(n+1)}  + \frac{2T(n)}{n+1} = S(n-1) - \frac{2\cdot 2^{m+1}}{n+1}\\
        &=\frac{1}{2m+2}\left( {2^{2m+2} +1} - {2^{m+2}} \right) = \frac{1}{2m+2}\left( {2^{m+1} -1}  \right)^2.
\end{align*}
This completes the proof.
\end{proof} 

With the aid of Lemma \ref{Polynomial Lemma}, we now further simplify the formulas for $\alpha_h$ and $\beta_h$ given in Lemmas \ref{Alpha Integrals Lemma} and \ref{Beta Integrals Lemma}. For the following result, we need the well-known {\it harmonic numbers}, defined by $H_0 = 0$ and
\begin{align*}
    H_n := \sum_{k=1}^n \frac{1}{k}
\end{align*}
for $n \geq 1$.

\begin{theorem} \label{Alpha Sum Formula}
    We have for all $h \geq 1$ that
    \begin{align*}
        \alpha_h = \dfrac{H_h - H_{\lceil \frac{h-1}{2} \rceil} + H_{h-1} - H_{\lceil \frac{h-2}{2} \rceil}}{2}.
    \end{align*}
\end{theorem}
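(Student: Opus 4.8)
The plan is to reduce the claim to a single clean identity for the auxiliary quantity $\widetilde\alpha_h$ from \eqref{Alpha-tilde definition}. By Lemma \ref{Alpha Integrals Lemma} we already have $\alpha_h = (\widetilde\alpha_h + \widetilde\alpha_{h-1})/2$, so, matching this against the claimed expression, it suffices to prove
\begin{align*}
	\widetilde\alpha_h = H_h - H_{\lceil (h-1)/2 \rceil}.
\end{align*}
Indeed, the term $\widetilde\alpha_{h-1}$ then contributes $H_{h-1} - H_{\lceil (h-2)/2 \rceil}$ automatically, and the factor $\tfrac{1}{2}$ is already in place.

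First I would recognize the inner $k$-sum in \eqref{Alpha-tilde definition} as a special value of the polynomial $F_{n,m}$ of Lemma \ref{Polynomial Lemma}: setting $x=1$ in $F_{n,m}(x)$ produces precisely $\sum_{k=0}^{n} \binom{n}{k} (-1)^k/(m+k)$, so the inner sum equals $F_{j,\,h-2j}(1)$. To evaluate this I would invoke part (1) of Lemma \ref{Polynomial Lemma}: at $x=1$ every summand of the numerator carrying a factor $(1-x)^{n-i}$ vanishes except the one with $i=n$, leaving
\begin{align*}
	F_{n,m}(1) = \frac{\binom{m-1}{m-1}}{m\binom{n+m}{m}} = \frac{1}{m\binom{n+m}{m}}.
\end{align*}
With $n=j$ and $m=h-2j$ (note $m \geq 1$ throughout, since $j \leq \lfloor (h-1)/2\rfloor$), and using $\binom{n+m}{m}=\binom{h-j}{j}$, this becomes $F_{j,h-2j}(1) = \bigl((h-2j)\binom{h-j}{j}\bigr)^{-1}$.

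Next I would simplify the full $j$-th summand of $\widetilde\alpha_h$. The binomial ratio collapses cleanly,
\begin{align*}
	\frac{\binom{h-j-1}{j}}{\binom{h-j}{j}} = \frac{h-2j}{h-j},
\end{align*}
so after multiplying by the factor $1/(h-2j)$ coming from $F_{j,h-2j}(1)$, the $j$-th term reduces to exactly $1/(h-j)$. Consequently
\begin{align*}
	\widetilde\alpha_h = \sum_{j=0}^{\lfloor (h-1)/2 \rfloor} \frac{1}{h-j} = H_h - H_{\,h - 1 - \lfloor (h-1)/2 \rfloor}.
\end{align*}
The argument then finishes with a short parity check verifying $h - 1 - \lfloor (h-1)/2 \rfloor = \lceil (h-1)/2 \rceil$ for both even and odd $h$, followed by the substitution $h \mapsto h-1$ and the averaging described above.

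The computation is essentially mechanical once the inner sum is identified with $F_{j,h-2j}(1)$, and the only step requiring genuine care is the evaluation $F_{n,m}(1)$ together with the accompanying binomial cancellation; one must also confirm that $h-2j$ never vanishes over the summation range so that Lemma \ref{Polynomial Lemma}(1) legitimately applies and no division by zero occurs. Beyond this bookkeeping I do not anticipate any serious obstacle.
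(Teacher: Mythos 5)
Your proposal is correct and follows essentially the same route as the paper's proof: both reduce via Lemma \ref{Alpha Integrals Lemma} to evaluating $\widetilde\alpha_h$, apply Lemma \ref{Polynomial Lemma}(1) at $x=1$ to get $F_{j,h-2j}(1) = \bigl((h-2j)\binom{h-j}{j}\bigr)^{-1}$, and use the cancellation $\binom{h-j-1}{j}/\binom{h-j}{j} = (h-2j)/(h-j)$ to obtain the telescoping sum $\sum_j 1/(h-j)$ of harmonic-number differences. The only cosmetic difference is that you handle the parity bookkeeping with a single ceiling identity rather than the paper's case split for even and odd $h$.
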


\begin{proof}
    By Lemma \ref{Polynomial Lemma} (1), we have the identity
    \begin{align*}
	\sum_{k=0}^n \binom{n}{k} \dfrac{(-1)^k}{m+k} = F_{n,m}(1) = \dfrac{1}{m \binom{m+n}{n}}.
    \end{align*}
    Using this identity with $n = j$ and $m = h - 2j$ we obtain from \eqref{Alpha-tilde definition} that
    \begin{align*}
        \widetilde\alpha_h &= \sum_{j = 0}^{\lfloor \frac{h-1}{2} \rfloor} \binom{h-j-1}{j} \sum_{k=0}^j \binom{j}{k} \dfrac{(-1)^k}{h-2j+k} = \sum_{j=0}^{\lfloor \frac{h-1}{2} \rfloor} \binom{h-j-1}{j} \dfrac{1}{(h-2j) \binom{h-j}{j}} \\ &= \sum_{j=0}^{\lfloor \frac{h-1}{2} \rfloor} \dfrac{1}{h-2j} \left( 1 - \dfrac{j}{h-j} \right) = \sum_{j = 0}^{\lfloor \frac{h-1}{2} \rfloor} \dfrac{1}{h-j}.
    \end{align*}
    Thus, 
    \begin{align}
    \label{alpha_h H formula}
        \widetilde\alpha_h = \begin{cases} H_h - H_{h/2} & \text{ if } 2|h, \\ H_h - H_{\frac{h-1}{2}} & \text{ if } 2 \centernot | h. \end{cases}
    \end{align}
    As it has already been established that $\alpha = \frac{\widetilde\alpha_h + \widetilde\alpha_{h-1}}{2}$, we obtain the desired formula for $\alpha_h$.
\end{proof}

We also note briefly that this theorem implies part of Theorem \ref{Limit Values and Inequality}.

\begin{corollary} \label{Alpha Limit}
    We have $\alpha_h \to \log(2)$ as $h \to \infty$.
\end{corollary}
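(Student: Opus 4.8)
The plan is to invoke the classical asymptotic expansion of the harmonic numbers, namely $H_n = \log(n) + \gamma + O(1/n)$ as $n \to \infty$ (with $\gamma$ the Euler--Mascheroni constant), and substitute it directly into the closed formula
\begin{align*}
    \alpha_h = \dfrac{H_h - H_{\lceil \frac{h-1}{2} \rceil} + H_{h-1} - H_{\lceil \frac{h-2}{2} \rceil}}{2}
\end{align*}
furnished by Theorem \ref{Alpha Sum Formula}. The strategy is simply to show that each of the two harmonic-number differences converges to $\log(2)$ individually.

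First I would rewrite each difference as the logarithm of a ratio of indices plus a vanishing error. Since $\lceil \frac{h-1}{2} \rceil = \frac{h}{2} + O(1)$, the expansion gives
\begin{align*}
    H_h - H_{\lceil \frac{h-1}{2} \rceil} = \log(h) - \log\lp \tfrac{h}{2} + O(1) \rp + O(1/h) = \log(2) + O(1/h),
\end{align*}
because the Euler--Mascheroni constants cancel and $\log(h) - \log\lp \frac{h}{2} + O(1) \rp = \log\lp 2 + O(1/h) \rp \to \log(2)$. An identical computation handles the second difference $H_{h-1} - H_{\lceil \frac{h-2}{2} \rceil}$, whose index ratio $(h-1)/\lceil \frac{h-2}{2} \rceil$ likewise tends to $2$, so that it too converges to $\log(2)$. (Equivalently, one may recognize each difference as a Riemann sum for $\int_0^1 \frac{dx}{1+x} = \log(2)$.)

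Then I would combine: substituting these two limits into the formula for $\alpha_h$ yields
\begin{align*}
    \alpha_h \to \dfrac{\log(2) + \log(2)}{2} = \log(2),
\end{align*}
as claimed. Since both summands converge on their own, there is no genuine obstacle here; the only point requiring minor care is confirming that the ceiling functions do not disturb the leading logarithmic asymptotics. This is immediate, as replacing $\lceil \frac{h-1}{2} \rceil$ by $\frac{h}{2}$ alters the index by a bounded amount and hence perturbs the corresponding harmonic number only by $O(1/h)$, which does not affect the limit.
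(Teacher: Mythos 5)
Your proof is correct and follows essentially the same route as the paper: both rely on the expansion $H_n = \log(n) + \gamma + o(1)$ applied to the harmonic-number differences in Theorem \ref{Alpha Sum Formula} (the paper phrases this as $\widetilde\alpha_h \to \log(2)$ and then averages, but $\widetilde\alpha_h = H_h - H_{\lceil (h-1)/2 \rceil}$ is exactly one of your two differences). Your remark that the ceiling shifts the index only by $O(1)$ and hence the harmonic number by $O(1/h)$ is the right justification and matches the paper's implicit handling of the floor/ceiling cases.
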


\begin{proof}
    Recall the well-known fact that
    \begin{align*}
        \lim\limits_{n \to \infty} \lp H_n - \log(n) \rp = \gamma,
    \end{align*}
    where $\gamma$ is the Euler--Mascheroni constant. Thus, $\widetilde\alpha_h \to \log(h) - \log(h/2) = \log(2)$ and $\alpha_h = \frac{\widetilde\alpha_h + \widetilde\alpha_{h-1}}{2} \to \log(2)$.
\end{proof}

\begin{remark}\label{remark5}
We note that a simpler proof of the limiting value $\alpha_h\to\log(2)$ exists.  It is possible to express the integrals $I_a(j,2l)$ as quotients of gamma values as follows:
\begin{align*}
I_A(j,2l)=\frac{\Gamma(j+1)\Gamma(l)}{2\Gamma(j+l+1)}.
\end{align*}
From here, we can obtain the limiting value $\widetilde{\alpha}_h\to\log(2)$, which implies the result.  We provide a longer proof above, because several equations and lemmas proven along the way will be useful in the proof of the limiting value of $\beta_h$ in Theorem \ref{Limit Values and Inequality} (1) and in the proof of Theorem \ref{Limit Values and Inequality} (2).
\end{remark}

We now prove the limiting theorems for $\beta_h$, which is the last major step before we complete the proof of Theorem \ref{Limit Values and Inequality}, and thus also of Theorem \ref{Main Theorem}.

\begin{theorem} \label{Beta Sum Formula}
    We have for all $h \geq 1$ that
    \begin{align*}
        \beta_h = \begin{cases}
            \displaystyle\sum_{c=0}^{\frac{h-2}{2}} \dfrac{1}{(2c+1)2^{2c+1}} & \text{ if } 2|h, \\ \displaystyle\sum_{c=0}^{\frac{h-3}{2}} \dfrac{1}{(c+1) 2^{c+1}} - \displaystyle\sum_{c=0}^{\frac{h-3}{2}} \dfrac{1}{(2c+2) 2^{2c+2}} + \displaystyle\int_{1/2}^1 \dfrac{(1-x)^{\frac{h-1}{2}}}{x} dx & \text{ if } 2 \centernot | h.
        \end{cases}
    \end{align*}
\end{theorem}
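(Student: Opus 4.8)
The plan is to collapse both double sums into a single definite integral and evaluate it by a geometric series expansion, keeping the two parities unified until the last step. First I would return to the integral that appears at the very start of the proof of Lemma~\ref{Beta Integrals Lemma}, before the logarithm is split off, namely
\[
\beta_h = \sum_{j=0}^{\lfloor (h-1)/2\rfloor} \binom{h-j-1}{j}\int_1^2 \frac{(u-1)^j}{u^{h-j}}\,du ,
\]
which holds for both parities (it comes from the substitution $u=1+e^{-x}$ in $I_B$). Writing $\frac{(u-1)^j}{u^{h-j}} = u^{-h}(u^2-u)^j$ and interchanging the finite sum with the integral reduces the problem to evaluating $\sum_{j}\binom{h-1-j}{j}y^j$ with $y=u^2-u$. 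Since $\binom{N-j}{j}$ satisfies the Fibonacci-type recurrence, this sum equals $\frac{t_+^{h}-t_-^{h}}{t_+-t_-}$, where $t_\pm$ are the roots of $t^2-t-y=0$; with $y=u^2-u$ these roots are exactly $u$ and $1-u$, so the sum collapses to $\frac{u^h-(1-u)^h}{2u-1}$ (nothing is lost in truncating at $\lfloor(h-1)/2\rfloor$, as the binomial vanishes beyond that). The substitution $x=1-1/u$, under which $2u-1=\frac{1+x}{1-x}$ and $du=\frac{dx}{(1-x)^2}$, then yields the clean uniform formula
\[
\beta_h = \int_0^{1/2}\frac{1-(-1)^h x^h}{1-x^2}\,dx .
\]

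Second, I would expand $\frac{1}{1-x^2}=\sum_{n\geq0}x^{2n}$ and integrate termwise. When $h$ is even, $\frac{1-x^h}{1-x^2}=\sum_{c=0}^{h/2-1}x^{2c}$ is a finite geometric sum, and integrating over $[0,1/2]$ gives $\beta_h=\sum_{c=0}^{(h-2)/2}\frac{1}{(2c+1)2^{2c+1}}$ immediately, completing the even case. When $h$ is odd, $\frac{1+x^h}{1-x^2}=\sum_{n\geq0}x^{2n}+\sum_{n\geq0}x^{2n+h}$, and termwise integration produces $\frac{1}{2}\log(3)+\frac{1}{2}\sum_{r>(h-1)/2}\frac{1}{r\,4^r}$.

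Third, I would reconcile this with the stated mixed closed form in the odd case. Setting $M=\frac{h-1}{2}$ and substituting $s=1-x$, the residual integral satisfies $\int_{1/2}^1\frac{(1-x)^M}{x}\,dx=\int_0^{1/2}\frac{s^M}{1-s}\,ds=\sum_{r>M}\frac{1}{r\,2^r}=\log(2)-\sum_{r=1}^M\frac{1}{r\,2^r}$, while the first stated sum is exactly $\sum_{c=0}^{M-1}\frac{1}{(c+1)2^{c+1}}=\sum_{r=1}^M\frac{1}{r\,2^r}$; the two cancel to leave $\log(2)$, so the stated right-hand side equals $\log(2)-\sum_{c=0}^{M-1}\frac{1}{(2c+2)2^{2c+2}}=\log(2)-\frac{1}{2}\sum_{r=1}^M\frac{1}{r\,4^r}$. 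On the other side, using $\sum_{r\geq1}\frac{1}{r\,4^r}=2\log(2)-\log(3)$ in the expression from the previous paragraph gives $\frac{1}{2}\log(3)+\frac{1}{2}\bigl(2\log(2)-\log(3)-\sum_{r=1}^M\frac{1}{r\,4^r}\bigr)=\log(2)-\frac{1}{2}\sum_{r=1}^M\frac{1}{r\,4^r}$, matching exactly.

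I expect the odd-case bookkeeping of this last step to be the main obstacle: the even case is a one-line finite geometric series, but the stated odd form deliberately packages the transcendental ($\log$) contribution as a leftover integral plus two rational partial sums, and one must track indices and parity carefully to see the cancellation $S_1+T=\log(2)$. An alternative that sidesteps the integral entirely is to begin from the sum formulas already proved in Lemma~\ref{Beta Integrals Lemma}, evaluate the inner $k$-sums as $F_{j,h-2j-1}(1)-F_{j,h-2j-1}(1/2)$ via Lemma~\ref{Polynomial Lemma}(1), and then collapse the resulting $j$-sums using Lemma~\ref{Summation identity}; I would nonetheless favor the integral route as primary, since the geometric-series step makes the $2^{2c+1}$ and $4^r$ denominators appear transparently rather than by manipulation of binomial identities.
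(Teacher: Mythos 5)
Your proof is correct, and it takes a genuinely different route from the paper. The paper never forms a single unified integral: it works from the expanded sum formulas of Lemma \ref{Beta Integrals Lemma}, applies Lemma \ref{Polynomial Lemma} (1)--(2) to evaluate the inner $k$-sums, and then grinds through the double sum $U(h)$ with two reindexings ($b=n-j$, then $c=b+k$) and the ad hoc binomial identity of Lemma \ref{Summation identity} to reach the stated forms, treating the two parities separately throughout. You instead back up to the representation $\beta_h=\sum_j\binom{h-j-1}{j}\int_1^2 (u-1)^j u^{j-h}\,du$ and collapse the $j$-sum inside the integral via the Fibonacci-polynomial identity $\sum_j\binom{h-1-j}{j}y^j=\frac{t_+^h-t_-^h}{t_+-t_-}$, which with $y=u^2-u$ factors miraculously as $\frac{u^h-(1-u)^h}{2u-1}$; the substitution $x=1-1/u$ then yields the clean parity-uniform formula
\begin{align*}
\beta_h=\int_0^{1/2}\frac{1-(-1)^h x^h}{1-x^2}\,dx,
\end{align*}
from which both cases follow by geometric expansion (I checked the identity against $\beta_2=\tfrac12$ and $\beta_3=\log(2)-\tfrac18$, and your odd-case reconciliation with the stated mixed form is algebraically correct). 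What your approach buys is substantial: it bypasses Lemma \ref{Summation identity} entirely, makes the denominators $(2c+1)2^{2c+1}$ appear transparently, and as a bonus makes Corollary \ref{Beta Limit} immediate, since $\beta_h\to\int_0^{1/2}\frac{dx}{1-x^2}=\frac{\log 3}{2}$ by inspection. What the paper's route buys is reuse of machinery (Lemma \ref{Polynomial Lemma} is also needed for Theorem \ref{Alpha Sum Formula}), at the cost of considerably heavier bookkeeping. The only ingredient you must supply that the paper does not is the Fibonacci-type identity, but that is classical and verified by a two-term recurrence check, so there is no gap.
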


\begin{proof}
    We begin first with $h = 2n$ even, in which case we have
    \begin{align*}
        \beta_h = \sum_{j=0}^{n-1} \sum_{k=0}^j \binom{h-j-1}{j} \binom{j}{k} \dfrac{(-1)^k}{h-2j+k-1} \lp 1 - \dfrac{1}{2^{h-2j+k-1}} \rp.
    \end{align*}
    Using Lemma \ref{Polynomial Lemma} (1), we have
    \begin{align*}
        \sum_{j=0}^{n-1} \sum_{k=0}^j \binom{h-j-1}{j} \binom{j}{k} \dfrac{(-1)^k}{h-2j+k-1} = \sum_{j=0}^{n-1} \dfrac{1}{h-2j-1},
    \end{align*}
    and so we may write
    \begin{align} \label{Eq 1}
        \beta_h = \sum_{j=0}^{n-1} \frac{1}{h-2j-1} - U_h,
    \end{align}
    where
    \begin{align*}
        U_h := \sum_{j=0}^{n-1} \frac{1}{h-2j-1} \sum_{k=0}^{h-j-2} \binom{h-j-k-2}{h-2j-2}  \frac{1}{2^{h-j-k-1}}.
    \end{align*}
    We first simplify $U_h$ and swap the order of summation:
    \begin{align*}
        U_h &= \sum_{j=0}^{n-1} \dfrac{1}{h-2j-1} \sum_{k=0}^j \binom{h-2j-2+k}{k} \dfrac{1}{2^{h-2j-1+k}} \\ &= \sum_{k=0}^{n-1} \sum_{j=k}^{n-1} \dfrac{1}{h-2j-1} \binom{h-2j-2+k}{k} \dfrac{1}{2^{h-2j-1+k}}.
    \end{align*}
    Since $h = 2n$, we write $b = n-j$ and obtain
    \begin{align*}
        U_h = \sum_{k=0}^{n-1} \sum_{b=0}^{n-k-1} \dfrac{1}{2b+1} \binom{2b+k}{k} \frac{1}{2^{2b+1+k}} &= \sum_{k=0}^{n-1} \dfrac{1}{2^k} \sum_{b=0}^{n-k-1} \dfrac{1}{2b+1} \binom{2b+k}{k} \dfrac{1}{2^{2b+1}}.
    \end{align*}
    If we further reindex the double sum with $c = b+k$, we have by Lemma \ref{Polynomial Lemma} (3) that
    \begin{align*}
        U_h &= \sum_{c=0}^{n-1} \sum_{k=0}^{c} \dfrac{1}{2c-2k+1} \binom{2c-k}{k}  \dfrac{1}{2^{2c-k+1}} = \sum_{c=0}^{n-1} \dfrac{1}{2c+1} \lp 1 - \dfrac{1}{2^{2c+1}} \rp.
    \end{align*}
    Thus, we may simplify \eqref{Eq 1} for $\beta_h$ to the form
    \begin{align*}
        \beta_h 
        &= \sum_{j=0}^{n-1} \dfrac{1}{h-2j-1} - \sum_{c=0}^{n-1} \dfrac{1}{2c+1} \lp 1 - \dfrac{1}{2^{2c+1}}\rp \\
        &= \sum_{c=0}^{n-1} \dfrac{1}{h-2(n-1-c)-1} 
        - \sum_{c=0}^{n-1} \dfrac{1}{2c+1}  
        + \sum_{c=0}^{n-1} \dfrac{1}{2c+1} \cdot \dfrac{1}{2^{2c+1}} \\        
        &= \sum_{c=0}^{n-1}  \dfrac{1}{(2c+1)2^{2c+1}}.
    \end{align*}
    This proves the theorem in the case where $h$ is even. \\

    Now, assume $h = 2n+1$ is odd. By Lemma \ref{Beta Integrals Lemma}, we have
    \begin{align*}
        \beta_h = \log(2) &+ \sum_{j=0}^{n} \binom{h-j-1}{j} \sum_{k=1}^j \binom{j}{k} \dfrac{(-1)^k}{h-2j+k-1}\lp 1 - \dfrac{1}{2^{h-2j+k-1}} \rp \\ &+ \sum_{j=0}^{n-1} \binom{h-j-1}{j} \dfrac{1}{h-2j-1}\lp 1 - \dfrac{1}{2^{h-2j-1}} \rp.
    \end{align*}
    From Lemma \ref{Polynomial Lemma} (1), we obtain as in the previous case that
    \begin{align*}
        \beta_h = \log(2) + \sum_{j=0}^{n-1} \dfrac{1}{h-2j-1} - U_h + \sum_{k=1}^n \dfrac{(-1)^k}{k}\lp 1 - \dfrac{1}{2^k} \rp.
    \end{align*}
    Now, Lemma \ref{Polynomial Lemma} (2) implies that
    \begin{align*}
        \log(2) + \sum_{k=1}^n \dfrac{(-1)^k}{k} \lp 1 - \dfrac{1}{2^k} \rp = \log(2) + \int_{1/2}^1 \dfrac{\lp 1-x \rp^n - 1}{x} dx = \int_{1/2}^1 \dfrac{(1-x)^n}{x} dx,
    \end{align*}
    and therefore
    \begin{align*}
        \beta_h = \sum_{j=0}^{n-1} \dfrac{1}{h-2j-1} - U_h + \int_{1/2}^1 \dfrac{(1-x)^n}{x} dx.
    \end{align*}
    Now, using Lemma \ref{Summation identity}, we therefore have
    \begin{align*}
        \beta_h &= \sum_{j=0}^{n-1} \dfrac{1}{h-2j-1} - \sum_{c=0}^{n-1} \dfrac{1}{2c+2}\lp 1 - \dfrac{2}{2^{c+1}} + \dfrac{1}{2^{2c+2}} \rp + \int_{1/2}^1 \dfrac{(1-x)^n}{x} dx \\ &= \sum_{c=0}^{n-1} \dfrac{1}{(c+1) 2^{c+1}} - \sum_{c=0}^{n-1} \dfrac{1}{(2c+2) 2^{2c+2}} + \int_{1/2}^1 \dfrac{(1-x)^n}{x} dx.
    \end{align*}
    This completes the proof for $h$ odd.
\end{proof}

We also note briefly that this theorem implies the remaining part of Theorem \ref{Limit Values and Inequality}~(1).

\begin{corollary} \label{Beta Limit}
    We have $\beta_h \to \frac{\log(3)}{2}$ as $h \to \infty$.
\end{corollary}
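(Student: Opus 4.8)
The plan is to take the limit $h \to \infty$ separately along even and odd values of $h$, starting directly from the closed forms for $\beta_h$ recorded in Theorem \ref{Beta Sum Formula}. In each parity the relevant finite sums are partial sums of convergent power series that can be recognized and evaluated in closed form, while the additional integral appearing in the odd case will be shown to vanish in the limit. Since both parities will be seen to give the same value $\frac{\log(3)}{2}$, the full limit follows.

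For $h = 2n$ even, Theorem \ref{Beta Sum Formula} gives $\beta_h = \sum_{c=0}^{n-1} \frac{1}{(2c+1)2^{2c+1}}$, so as $n \to \infty$ we obtain $\beta_h \to \sum_{c=0}^\infty \frac{1}{(2c+1)2^{2c+1}}$. I would recognize this as the value at $x = \tfrac12$ of the series $\sum_{c \geq 0} \frac{x^{2c+1}}{2c+1} = \frac12 \log \frac{1+x}{1-x}$, which immediately yields $\frac12 \log 3 = \frac{\log(3)}{2}$.

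For $h = 2n+1$ odd, Theorem \ref{Beta Sum Formula} writes $\beta_h$ as the difference of two finite sums plus the integral $\int_{1/2}^1 \frac{(1-x)^n}{x}\,dx$. The first sum converges to $\sum_{k \geq 1} \frac{1}{k\, 2^k} = -\log\!\lp 1 - \tfrac12 \rp = \log 2$, and the second, after writing $\frac{1}{(2c+2)2^{2c+2}} = \frac{1}{2(c+1)4^{c+1}}$, converges to $\frac12 \sum_{k \geq 1} \frac{1}{k\, 4^k} = -\frac12 \log\!\lp 1 - \tfrac14 \rp = \frac12 \log \frac{4}{3}$; both are instances of $\sum_{k \geq 1} \frac{x^k}{k} = -\log(1-x)$. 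For the integral, on $[1/2,1]$ one has $0 \leq (1-x)^n \leq 2^{-n}$ and $\frac1x \leq 2$, so the integrand is bounded by $2 \cdot 2^{-n}$ and the integral is at most $2^{-n} \to 0$. Combining the three pieces gives $\beta_h \to \log 2 - \frac12 \log \frac{4}{3} = \log 2 - \lp \log 2 - \tfrac12 \log 3 \rp = \frac{\log(3)}{2}$, matching the even case.

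The computations here are almost entirely routine series evaluations, so I do not expect a serious obstacle. The only genuinely analytic step is the vanishing of the integral in the odd case, and even that is dispatched by the elementary uniform bound $(1-x)^n \leq 2^{-n}$ on $[1/2,1]$ rather than requiring any appeal to dominated convergence.
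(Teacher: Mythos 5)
Your proof is correct and follows essentially the same route as the paper: both start from the closed forms in Theorem \ref{Beta Sum Formula}, treat the two parities separately, show the integral $\int_{1/2}^1 \frac{(1-x)^n}{x}\,dx$ vanishes, and evaluate the limiting series via logarithm series at $x=\tfrac12$ (the paper merely combines the two odd-case series into the single odd-index series $\sum_{c\geq 0}\frac{1}{(2c+1)2^{2c+1}}$ before evaluating, whereas you evaluate them separately as $\log 2$ and $\tfrac12\log\tfrac43$). Your version is slightly more explicit than the paper's, in particular the uniform bound $2\cdot 2^{-n}$ on the integrand where the paper says only ``it is clear.''
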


\begin{proof}
    As $n \to \infty$ it is clear that
    \begin{align*}
        \int_{1/2}^1 \dfrac{(1-x)^n}{x} dx \to 0
    \end{align*}
    and therefore by Theorem \ref{Beta Sum Formula} we have as $n \to \infty$ that
    \begin{align*}
        \beta_{2n+1} \sim \sum_{c=0}^\infty \dfrac{1}{(c+1)2^{c+1}} - \sum_{c=0}^\infty \dfrac{1}{(2c+2) 2^{2c+2}} = \sum_{c=1}^\infty \dfrac{1}{(2c+1)2^{2c+1}} = \dfrac{\log(3)}{2}.
    \end{align*}
    Taking into consideration even $h$, it is clear that $\beta_h \to \frac{\log(3)}{2}$ as $h \to \infty$.
\end{proof}

We are now ready to prove the remaining part of Theorem \ref{Limit Values and Inequality}, and therefore also of \ref{Main Theorem}.

\begin{proof}[Proofs of Theorems \ref{Limit Values and Inequality} and \ref{Main Theorem}]
    By Corollaries \ref{Alpha Limit} and \ref{Beta Limit} along with Theorems \ref{Alpha Asymptotic Theorem} and \ref{Beta Asymptotic Theorem}, Theorem \ref{Limit Values and Inequality} (1) is already proven, and so it only remains to show that $\alpha_h > \beta_h$ for all $h \geq 2$.

    To show this inequality, we first consider $\beta_h$. For $h = 2n \geq 2$ even, it is clear from Theorem \ref{Beta Sum Formula} that
    \begin{align*}
        \beta_{2n} < \sum_{c=1}^\infty \frac{1}{(2c+1) 2^{2c+1}} = \frac{\log(3)}{2}.
    \end{align*}
    To study $\beta_{2n+1}$ for $n \geq 1$, we begin by observing that $\beta_3 = \log(2) - \frac 18 > \frac{\log(3)}{2}$. For $n \geq 2$, we have by Theorem \ref{Beta Sum Formula} that
    \begin{align*}
        \beta_{2n+1} - \beta_{2n-1} &= \dfrac{1}{n 2^n} - \dfrac{1}{n 2^{2n+1}} + \int_{1/2}^1 \dfrac{(1-x)^n - (1-x)^{n-1}}{x} dx \\ &= \dfrac{1}{n 2^n} - \dfrac{1}{n 2^{2n+1}} - \int_{1/2}^1 (1-x)^{n-1} dx \\ &= - \dfrac{1}{n 2^{2n+1}}.
    \end{align*}
    Thus, $\beta_h \leq \log(2) - \frac 18$ for all $h \geq 2$. We now consider a similar study of $\alpha_h$. Recall from \eqref{alpha_h H formula} that $\alpha_h = \frac{\widetilde \alpha_h + \widetilde\alpha_{h-1}}{2}$ for
    \begin{align*}
        \widetilde\alpha_h = \begin{cases} H_h - H_{\frac h2} & \text{ if } 2|h, \\ H_h - H_{\frac{h-1}{2}} & \text{ if } 2 \centernot | h. \end{cases}
    \end{align*}
    Therefore, we have
    \begin{align*}
        \alpha_{h+1} - \alpha_h = \dfrac{\widetilde\alpha_{h+1} -\widetilde\alpha_{h-1}}{2}.
    \end{align*}
    Now, for $h$ odd we have
    \begin{align*}
        \widetilde\alpha_{h+1} - \widetilde\alpha_{h-1} &= \lp H_{h+1} - H_{\frac{h+1}{2}} \rp - \lp H_{h-1} - H_{\frac{h-1}{2}} \rp \\ &= \lp H_{h+1} - H_{h-1} \rp - \lp H_{\frac{h+1}{2}} - H_{\frac{h-1}{2}} \rp = \dfrac{1}{h} - \dfrac{1}{h+1} > 0,
    \end{align*}
    and likewise $\widetilde\alpha_{h+1} - \widetilde\alpha_{h-1} > 0$ for $h$ even as well. Thus, $\alpha_h$ is an increasing function of $h$. Now, we have from \cite{BBCFW} that $\alpha_2 = \frac 34 > \frac 12 = \beta_2$ and $\alpha_3 = \frac 23 > \log(2) - \frac 18 = \beta_3$. Therefore, for $h \geq 4$ we have $\alpha_h > \alpha_3 > \beta_3 > \beta_h$, which completes the proof.
\end{proof}

\section{Final Remarks} \label{Discussion}

\subsection{Frequency of hooks in the rows of partitions}

Probabilistic features of partitions are of great interest, and asymptotic formulas derived from the circle method are very useful for studying such questions \cite{EL, F, NR}. We briefly give an overview of some statistical corollaries which can be derived from our results when combined with other known asymptotics in the literature. To state these results, we let $d(n)$ be the number of partitions into odd parts (or into distinct parts). We first give the average number of hooks equal to any $h \geq 1$ among the partitions of $n$ into odd parts.

\begin{corollary} \label{Averages}
    Let $\mathrm{avg}_{\mathcal L}(h;n)$ be the average number of hooks of length $h$ among the partitions of size $n$ in the collection $\mathcal L$. Then we have
    \begin{align*}
        \mathrm{avg}_{\mathcal O}\lp h;n \rp \sim \dfrac{6 \alpha_h}{\pi} \sqrt{\dfrac{n}{3}}, \quad \mathrm{avg}_{\mathcal D}\lp h;n \rp \sim \dfrac{6 \beta_h}{\pi} \sqrt{\dfrac{n}{3}}.
    \end{align*}
    as $n \to \infty$.
\end{corollary}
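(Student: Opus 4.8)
The plan is to reduce the statement to quantities already in hand, namely the asymptotics for $a_h(n)$ and $b_h(n)$ from Theorems \ref{Alpha Asymptotic Theorem} and \ref{Beta Asymptotic Theorem}, together with the classical asymptotic for the counting function $d(n)$. First I would unwind the definition of the average: the average number of hooks of length $h$ among the partitions of size $n$ in a family is just the \emph{total} number of such hooks divided by the \emph{number} of partitions of size $n$ in that family. Hence
\begin{align*}
    \mathrm{avg}_{\mathcal O}(h;n) = \frac{a_h(n)}{p_{\mathcal O}(n)}, \qquad \mathrm{avg}_{\mathcal D}(h;n) = \frac{b_h(n)}{p_{\mathcal D}(n)},
\end{align*}
and by Euler's theorem $p_{\mathcal O}(n) = p_{\mathcal D}(n) = d(n)$, so both quotients share the same denominator.

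Next I would record the classical asymptotic
\begin{align*}
    d(n) \sim \frac{1}{4 \cdot 3^{1/4}}\, n^{-3/4}\, e^{\pi\sqrt{n/3}},
\end{align*}
which follows from exactly the machinery already set up in Section \ref{Asymptotics}: apply Proposition \ref{WrightCircleMethod} to $\sum_n d(n)q^n = (-q;q)_\infty$ with $L \equiv 1$ and $\xi = (-q;q)_\infty$. The transformation \eqref{Odd GF Asymptotic} supplies $K = 1/\sqrt{2}$ and $A = \pi^2/12$, and a direct evaluation of the relevant Wright coefficient yields the constant $\tfrac{1}{4\cdot 3^{1/4}}$. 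Dividing the formulas of Theorems \ref{Alpha Asymptotic Theorem} and \ref{Beta Asymptotic Theorem} by this then gives
\begin{align*}
    \mathrm{avg}_{\mathcal O}(h;n) = \frac{a_h(n)}{d(n)} \sim \frac{\alpha_h\,\frac{3^{1/4}}{2\pi}\,n^{-1/4}\,e^{\pi\sqrt{n/3}}}{\frac{1}{4\cdot 3^{1/4}}\,n^{-3/4}\,e^{\pi\sqrt{n/3}}} = \frac{6\alpha_h}{\pi}\sqrt{\frac{n}{3}},
\end{align*}
and the computation for $\mathrm{avg}_{\mathcal D}(h;n)$ is identical with $b_h(n)$ and $\beta_h$ in place of $a_h(n)$ and $\alpha_h$.

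The one point that genuinely requires care is the power of $n$ in the asymptotic for $d(n)$. Because the $L$-factor here is the constant $1$ and therefore has no pole at $z = 0$, hypothesis (H1) holds with vanishing leading coefficient, so the dominant Wright term $p_0$ vanishes and the main contribution comes from the next order, producing $n^{-3/4}$ rather than the $n^{-1/4}$ appearing in Theorems \ref{Alpha Asymptotic Theorem} and \ref{Beta Asymptotic Theorem}. It is precisely this discrepancy of one power of $n^{1/2}$ between numerator and denominator that manufactures the factor $\sqrt{n/3}$ in the average, so I would take care to justify it rather than treat it as automatic. The rest is the routine simplification $\frac{3^{1/4}}{2\pi}\cdot 4\cdot 3^{1/4} = \frac{2\sqrt{3}}{\pi} = \frac{6}{\pi\sqrt{3}}$, which I would carry out explicitly to land on the stated constant.
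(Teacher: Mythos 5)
Your proposal is correct and follows essentially the same route as the paper: express the average as $a_h(n)/p_{\mathcal O}(n)$ and $b_h(n)/p_{\mathcal D}(n)$, invoke the circle-method asymptotic $p_{\mathcal O}(n)=p_{\mathcal D}(n)\sim \frac{3^{3/4}}{12}n^{-3/4}e^{\pi\sqrt{n/3}}$ (your constant $\frac{1}{4\cdot 3^{1/4}}$ is the same number), and divide using Theorems \ref{Alpha Asymptotic Theorem} and \ref{Beta Asymptotic Theorem}. Your extra care about the vanishing leading Wright coefficient when $L\equiv 1$, which shifts the exponent from $n^{-1/4}$ to $n^{-3/4}$, is a correct justification of a step the paper simply cites as well known.
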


\begin{proof}
    It is a well-known consequence of the circle method (and indeed can be proven from \eqref{Odd GF Asymptotic} and Proposition \ref{WrightCircleMethod}) that
    \begin{align}
        p_{\mathcal O}(n) = p_{\mathcal D}(n) \sim \dfrac{3^{3/4}}{12 n^{3/4}} e^{\pi \sqrt{\frac{n}{3}}}.
    \end{align}
    Note that the average number of hooks of length $h$ in a partition of $n$ into odd parts or distinct parts is given by $a_h(n)/p_{\mathcal O}(n)$, $b_h(n)/p_{\mathcal D}(n)$, respectively. This completes the proof along with Theorems \ref{Alpha Asymptotic Theorem} and \ref{Beta Asymptotic Theorem}.
\end{proof}

There is another natural probabilistic question which can now be answered about hooks in these restricted classes of partitions, namely the probability that a uniformly selected part from this class of partitions has a hook of that length in the corresponding row of the Ferrers diagram.

\begin{corollary} \label{Parts Probabilities}
    Let $\mathrm{prob}_{\mathcal L}\lp h; n \rp$ denote the probability that a randomly selected row from among the partitions in $\mathcal L$ of size $n$ has a hook of length $h$. Then we have
    \begin{align*}
        \mathrm{prob}_{\mathcal O}\lp h;n \rp \sim \dfrac{4\alpha_h}{\log(n)}, \quad \mathrm{prob}_{\mathcal D}\lp h;n \rp \sim \dfrac{\beta_h}{\log(2)}
    \end{align*}
    as $n \to \infty$.
\end{corollary}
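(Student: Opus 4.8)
The plan is to express each probability as a ratio whose numerator counts rows containing a hook of length $h$ and whose denominator counts all rows, and then to feed the resulting generating functions into Wright's circle method exactly as in the proofs of Theorems \ref{Alpha Asymptotic Theorem} and \ref{Beta Asymptotic Theorem}.

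First I would record the key combinatorial observation that, within any fixed row of a Ferrers diagram, the hook lengths strictly decrease from left to right: moving one box to the right decreases the arm length by exactly $1$ while the leg length is non-increasing. Hence each hook length occurs at most once in a given row, so the map sending a cell of hook length $h$ to its row is injective onto the rows containing a hook of length $h$, and the number of such rows of $\lambda$ equals $\#\{x\in\mathcal H(\lambda):x=h\}$. Summing over $\lambda\in\mathcal O(n)$ (resp. $\mathcal D(n)$) shows that the number of favorable rows is exactly $a_h(n)$ (resp. $b_h(n)$), while the total number of rows is the total number of parts $O(n):=\sum_{\lambda\in\mathcal O(n)}\ell(\lambda)$ (resp. $D(n):=\sum_{\lambda\in\mathcal D(n)}\ell(\lambda)$). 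Thus $\mathrm{prob}_{\mathcal O}(h;n)=a_h(n)/O(n)$ and $\mathrm{prob}_{\mathcal D}(h;n)=b_h(n)/D(n)$, so by Theorems \ref{Alpha Asymptotic Theorem} and \ref{Beta Asymptotic Theorem} it remains only to determine the asymptotics of $O(n)$ and $D(n)$.

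Next I would produce generating functions for the total number of parts by marking each part with an auxiliary variable $z$ and differentiating at $z=1$, which gives
\begin{align*}
\sum_{n\geq0}D(n)q^n=(-q;q)_\infty\sum_{k\geq1}\frac{q^k}{1+q^k},\qquad
\sum_{n\geq0}O(n)q^n=(-q;q)_\infty\sum_{k\geq0}\frac{q^{2k+1}}{1-q^{2k+1}},
\end{align*}
where for the odd case I use Euler's identity $(-q;q)_\infty(q;q^2)_\infty=1$. For the distinct case the prefactor is handled by exactly the Euler--Maclaurin argument of Proposition \ref{G_ab Asymptotics}: with $q=e^{-z}$ one finds $\sum_{k\geq1}q^k/(1+q^k)\sim\frac1z\int_0^\infty e^{-x}/(1+e^{-x})\,dx=\frac{\log(2)}{z}$, which has the clean $\frac1z$ singularity required by hypothesis (H1) of Proposition \ref{WrightCircleMethod}. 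Running Wright's circle method verbatim as in Theorem \ref{Beta Asymptotic Theorem} then yields $D(n)\sim\log(2)\,\frac{3^{1/4}}{2\pi n^{1/4}}e^{\pi\sqrt{n/3}}$, and taking the ratio with $b_h(n)$ gives $\mathrm{prob}_{\mathcal D}(h;n)\sim\beta_h/\log(2)$.

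The main obstacle is the odd case, where the prefactor $\sum_{k\geq0}q^{2k+1}/(1-q^{2k+1})$ does not satisfy hypothesis (H1): the associated function $1/(e^x-1)$ is not integrable at the origin, so the would-be Euler--Maclaurin constant is replaced by a logarithm. I would handle this with a Mellin transform. Since the Mellin transform of $1/(e^x-1)$ is $\Gamma(s)\zeta(s)$ and $\sum_{k\geq0}(2k+1)^{-s}=(1-2^{-s})\zeta(s)$, Mellin inversion gives, as $z\to0$,
\begin{align*}
\sum_{k\geq0}\frac{q^{2k+1}}{1-q^{2k+1}}
&=\frac{1}{2\pi i}\int\Gamma(s)(1-2^{-s})\zeta(s)^2 z^{-s}\,ds\\
&=\frac{-\log z+\log(2)+\gamma}{2z}+O(1),
\end{align*}
the leading term coming from the double pole of $\zeta(s)^2$ at $s=1$ (note $(1-2^{-s})$ kills the $\Gamma$-pole at $s=0$). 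The constant part $\frac{\log(2)+\gamma}{2z}$ contributes, by the distinct-case argument, a term of the same order as $D(n)$, which is negligible against the logarithmic main term. For the main term I would use $\frac{-\log z}{2z}=\frac12\,\partial_s\big[z^{-s}\big]_{s=1}$: applying Wright's method to the family with prefactor $z^{-s}$ (whose $n$th coefficient grows like a constant times $n^{(s-1)/2-1/4}e^{\pi\sqrt{n/3}}$) and differentiating in $s$ at $s=1$, the derivative of $n^{(s-1)/2}$ produces the factor $\tfrac12\log n$. This yields $O(n)\sim\frac14\log(n)\,\frac{3^{1/4}}{2\pi n^{1/4}}e^{\pi\sqrt{n/3}}$, whence $\mathrm{prob}_{\mathcal O}(h;n)=a_h(n)/O(n)\sim\alpha_h/(\tfrac14\log n)=4\alpha_h/\log(n)$. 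The genuine work is in justifying the interchange of the $s$-derivative with the circle-method asymptotics, equivalently that the slowly varying prefactor $\log(1/z)$ may be evaluated at the saddle point $z_0\asymp n^{-1/2}$; alternatively one may simply cite a known asymptotic for the number of parts in partitions into odd parts.
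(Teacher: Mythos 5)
Your proposal is correct and its combinatorial reduction is exactly the paper's: each row of a Ferrers diagram contains at most one cell of hook length $h$ (your monotonicity argument makes this explicit, which the paper only asserts), so the favorable rows are counted by $a_h(n)$, $b_h(n)$, and the probabilities are these divided by the total number of parts $d_{\mathcal O}(n)$, $d_{\mathcal D}(n)$. Where you diverge is in how the denominators are handled: the paper simply cites the known asymptotics $d_{\mathcal O}(n) \sim \frac{3^{1/4}\log(n)}{8\pi n^{1/4}} e^{\pi\sqrt{n/3}}$ and $d_{\mathcal D}(n) \sim \frac{3^{1/4}\log(2)}{2\pi n^{1/4}} e^{\pi\sqrt{n/3}}$ from \cite{C} and \cite{JO2}, whereas you derive them. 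Your distinct-parts derivation (Euler--Maclaurin on $\sum_{k\geq 1} q^k/(1+q^k)$ giving $\log(2)/z$, then Proposition \ref{WrightCircleMethod}) stays entirely inside the paper's toolkit and is fine. Your odd-parts derivation necessarily goes beyond it, since the prefactor $\sum_{k \geq 0} q^{2k+1}/(1-q^{2k+1})$ has a $\log(1/z)/z$ singularity that violates hypothesis (H1); your Mellin computation is correct (the double pole of $\Gamma(s)(1-2^{-s})\zeta(s)^2 z^{-s}$ at $s=1$ does give $\frac{-\log z + \log 2 + \gamma}{2z}$, and the factor $(1-2^{-s})$ does cancel the pole of $\Gamma$ at $s=0$), and the resulting $\frac{1}{4}\log(n)$ factor matches the cited asymptotic. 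The one genuine gap you correctly flag yourself: the differentiation-in-$s$ trick requires justifying uniformity of the circle-method error terms in $s$ (equivalently, a version of Wright's method that tolerates slowly varying $\log(1/z)$ prefactors), which Proposition \ref{WrightCircleMethod} as stated does not provide; this is precisely the technical content of \cite{C} and \cite{JO2}, so your fallback of citing those results is not merely an alternative but is what closes the argument rigorously --- and it is what the paper does. The trade-off is clear: the paper's proof is two lines but outsources the hard analysis; yours is self-contained in spirit and explains where the $\log(n)$ versus $\log(2)$ discrepancy comes from, at the cost of needing a strengthened circle-method input to be complete.
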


\begin{proof}
    Note that by construction of the diagram, each row can have at most one hook of length $h$ in each row. Therefore, if we let $d_{\mathcal O}(n)$ and $d_{\mathcal D}(n)$ denote the number of parts among all partitions into odd parts or distinct parts, respectively, it follows that the desired probabilities are given by $a_h(n)/d_{\mathcal O}(n)$ and $b_h(n)/d_{\mathcal D}(n)$, respectively. From the main results of \cite{C,JO2} we obtain
    \begin{align*}
        d_{\mathcal O}(n) \sim \dfrac{3^{\frac 14} \log(n)}{8 \pi n^{\frac 14}} e^{\pi \sqrt{\frac{n}{3}}}, \quad d_{\mathcal D}(n) \sim \dfrac{3^{\frac 14} \log(2)}{2\pi n^{\frac 14}} e^{\pi \sqrt{\frac{n}{3}}}
    \end{align*}
    as $n \to \infty$. The result follows from Theorems \ref{Alpha Asymptotic Theorem} and \ref{Beta Asymptotic Theorem}.
\end{proof}

The results of this section together give a detailed comparison between the properties of hook numbers in these two classes of partitions. From Corollary \ref{Averages}, we reaffirm that the total number of hooks of length $h \geq 2$ is larger for $\mathcal O$ than for $\mathcal D$, and this inequality flips for $h = 1$. However, the perspective of individual parts tells a different story. As $n \to \infty$, we see that that the ``average row" of a partition into odd parts has a vanishing probability of containing a hook of length $h$, while that same probability for the number of partitions into distinct parts is the positive number $\frac{\beta_h}{\log(2)}$. By Corollary \ref{Beta Limit}, this constant approaches $\frac{\log(3)}{\log(4)} \approx 0.7924$. Thus, most rows of partitions into distinct parts have a hook of any given length $h \geq 1$. This discrepancy is reflected by the fact that partitions into odd parts have many more parts than partitions into distinct parts do on average, as is seen from the asymptotics for $d_{\mathcal O}(n)$ and $d_{\mathcal D}(n)$. It would be very interesting to more deeply study these probabilistic features of hooks in partitions, as is done for example in \cite{GOT} for hook numbers which are divisible by $h$ in unrestricted partitions.

\subsection{Open problems and questions}

The motivating question of this study can be greatly generalized; the underlying concept is to understand using the circle method (or other methods) how combinatorial statistics on partitions behave on different subfamilies of partitions. Such studies can be immediately generalized to other related combinatorial objects, such as unimodal sequences or more general integer compositions.

It would also be natural to study such questions for other combinatorial statistics on standard integer partitions, such as ranks and cranks of partitions. Our result does not immediately give an asymptotic count or even inequality for these counting functions, but a sieving argument such as that in \cite{EL} might be useful here. Alternatively, one might add together generating functions $\tilde{a}_{th}(q), \tilde{b}_{th}(q)$ over the values $t \geq 1$ and perform an analysis parallel to that of this paper.

Finally, it would be interesting to pursue variations of this problem for other families of partitions which lie in bijection with one another. As there remain conjectures of \cite{BBCFW} which are not resolved in this aspect, we focus on this case. The primary discussions not addressed here involve the functions $a_h^*(n)$ and $b_h^*(n)$, which count the number of hooks of length $h$ among self-conjugate partitions and partitions into distinct odd parts, respectively. In light of our main theorems and the data presented in \cite{BBCFW}, we present the following conjecture in this setting.

\begin{conjecture}
    For $h \geq 2$, there is a constant $\gamma_h^* > 1$ such that $a^*_h(n)/b_h^*(n) \to \gamma_h^*$ as $n \to \infty$.
\end{conjecture}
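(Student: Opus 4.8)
The plan is to rerun the entire pipeline of this paper with the pair $(\mathcal{O},\mathcal{D})$ replaced by the pair consisting of self-conjugate partitions and partitions into distinct odd parts, whose common enumerating series is the eta-quotient $(-q;q^2)_\infty=\prod_{k\geq1}(1+q^{2k-1})$. The aim is to prove separate asymptotics $a_h^*(n)\sim\alpha_h^*\,U(n)$ and $b_h^*(n)\sim\beta_h^*\,U(n)$ in which the factor $U(n)$ depends only on $(-q;q^2)_\infty$ and not on $h$ or on which of the two families is considered. Once this is in place, $U(n)$ cancels in the ratio and $a_h^*(n)/b_h^*(n)\to\alpha_h^*/\beta_h^*=:\gamma_h^*$, so that the conjecture reduces, exactly as Theorem~\ref{Limit Values and Inequality} did in the unstarred case, to the purely arithmetic inequality $\alpha_h^*>\beta_h^*$ for all $h\geq2$.

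First I would produce the generating functions $\tilde a_h^*(q)$ and $\tilde b_h^*(q)$ by the arm/leg/coarm region method of Theorem~\ref{th:gf:ab}. For partitions into distinct odd parts this is routine: the four region series $F(A,q),F(B,q),F(C,q),F(D,q)$ must simultaneously encode distinctness and oddness, which simply combines the modifications already made separately for $\mathcal{D}$ and for $\mathcal{O}$ in that proof, yielding a finite sum over indices $(j,m)$ of $q$-binomials times geometric-type series, all multiplied by $(-q;q^2)_\infty$. The \emph{self-conjugate} case is the genuine obstacle, because self-conjugacy couples a box above the diagonal to its mirror image below, so the four regions around a box are no longer independent and the clean product decomposition of Theorem~\ref{th:gf:ab} fails. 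I would instead decompose relative to the main diagonal: each off-diagonal hook of length $h$ occurs in a symmetric pair, whereas a diagonal box contributes a hook of length $h$ only when $h$ is odd, its hook length being $2\arm(\lambda,v)+1$. Parametrizing self-conjugate partitions by their distinct odd principal hooks $h_1>h_2>\cdots$ (the very bijection underlying $a_h^*\leftrightarrow b_h^*$) and expressing the arm and leg of a box weakly above the diagonal through the nested hook-shell structure should allow the generating function to be summed into a rational, or nearly rational, multiple of $(-q;q^2)_\infty$, giving the analogue of Theorem~\ref{th:rat:ab}; the geometric sums are then removed exactly as there using \eqref{eq 1}--\eqref{eq 3}.

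Granting these representations, the analysis of Section~\ref{Asymptotics} transfers essentially verbatim. Since $\tilde a_h^*(q)/(-q;q^2)_\infty$ and $\tilde b_h^*(q)/(-q;q^2)_\infty$ are (nearly) rational, Euler--Maclaurin summation in the two-variable form of Proposition~\ref{EM Formula}, as applied to $F_{j,k,l}$ and $G_{j,k}$, yields $\tfrac1z$-asymptotics as $z\to0$ with leading constants $\alpha_h^*,\beta_h^*$ given by explicit integrals of the type \eqref{I_A Definition}--\eqref{I_B Definition}. Because $(-q;q^2)_\infty$ is an eta-quotient, the analogue of \eqref{Odd GF Asymptotic} holds with $A=\tfrac{\pi^2}{24}$ replacing $\tfrac{\pi^2}{12}$, producing growth of order $e^{\pi\sqrt{n/6}}$, and the same eta-quotient satisfies hypotheses (H2) and (H4) of Proposition~\ref{WrightCircleMethod}. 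Applying that proposition then gives the common universal factor $U(n)$ for both $a_h^*(n)$ and $b_h^*(n)$, establishing the limit $\gamma_h^*=\alpha_h^*/\beta_h^*$.

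It remains to evaluate the constants and verify $\gamma_h^*>1$, following Section~\ref{Constants}. I would reduce $\alpha_h^*$ and $\beta_h^*$ to finite sums of rational numbers and harmonic-number expressions, compute their limits as $h\to\infty$ to exhibit a limiting value of $\gamma_h^*$ strictly greater than $1$, and then prove $\alpha_h^*>\beta_h^*$ for every $h\geq2$ by combining a monotonicity argument for the relevant sequences with a check of finitely many base cases, exactly as in the proof of Theorem~\ref{Limit Values and Inequality}. The only step that is not a direct transcription of the present paper is the construction of the self-conjugate generating function; once it is placed in the nearly-rational form above, all subsequent steps follow the arguments already carried out here.
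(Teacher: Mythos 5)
First, note that the statement you are addressing is not a theorem of the paper: it appears in Section \ref{Discussion} precisely as an \emph{open conjecture}, so there is no proof in the paper to compare against. Your proposal is therefore to be judged as a standalone argument, and as it stands it is a research program rather than a proof. Its overall architecture is sound and mirrors the paper correctly: if one can write $\tilde a_h^*(q)$ and $\tilde b_h^*(q)$ as (nearly) rational functions times the common eta-quotient $(-q;q^2)_\infty$, then Euler--Maclaurin summation, Proposition \ref{WrightCircleMethod} with $A=\tfrac{\pi^2}{24}$, and an evaluation of the resulting constants would indeed yield $\gamma_h^*=\alpha_h^*/\beta_h^*$ and reduce the conjecture to $\alpha_h^*>\beta_h^*$. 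Your preliminary observations about self-conjugate partitions are also correct: mirror cells $(i,j)$ and $(j,i)$ have equal hook lengths, and diagonal cells have odd hook lengths $2\arm(\lambda,v)+1$.

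The genuine gap is the step you yourself flag as ``the genuine obstacle'' and then resolve only with the word ``should'': the construction of $\tilde a_h^*(q)$ in nearly-rational form. The entire engine of Theorems \ref{th:gf:ab} and \ref{th:rat:ab} rests on the four regions $A,B,C,D$ around a marked cell being \emph{independent}, so that the generating function factors; self-conjugacy destroys exactly this independence, since the portion of the partition above the diagonal determines the portion below. Your proposed fix---parametrizing by principal hooks and expressing arm and leg lengths of an above-diagonal cell through the ``nested hook-shell structure''---is a plausible idea, but it is not carried out, and it is nontrivial: the arm and leg of a cell in the $i$th shell depend on how many later principal hooks pass below and to the right of it, which couples all shells and does not obviously collapse to geometric sums removable by identities \eqref{eq 1}--\eqref{eq 3}. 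This unexecuted construction is precisely where the difficulty of the conjecture lives (and plausibly why the authors left it open). A second, smaller gap: the concluding inequality $\alpha_h^*>\beta_h^*$ for all $h\geq 2$, and even the positivity of $\beta_h^*$ needed for the ratio to make sense, cannot be ``checked on finitely many base cases plus monotonicity'' until the constants exist in closed form; in the unstarred case this step consumed all of Section \ref{Constants} and depended on delicate identities (Lemmas \ref{Polynomial Lemma} and \ref{Summation identity}) that have no automatic starred analogues. So the proposal correctly identifies the road map but proves neither of the two substantive points on it.
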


It would also be interesting if $\gamma_h^*$ itself had a limit as $h \to \infty$, but we do not speculate on this here. The function $a_h^*(n)$ is of additional interest, as it seems to have significant nontrivial divisibilities. We therefore restate here a conjecture of \cite{BBCFW}.

\begin{conjecture}
    For each $n \geq 0$ and $m \geq 1$, we have
    \begin{align*}
        a^*_{2m}(n) \equiv 0 \pmod{2m}.
    \end{align*}
\end{conjecture}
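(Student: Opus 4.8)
The plan is to pass from $a^*_{2m}(n)$ to a generating function in which the factor $2m$ becomes visible, exploiting the classical parametrization of self-conjugate partitions by their principal (diagonal) hook lengths. Recall that a self-conjugate partition $\lambda$ is determined by the set $D=\{d_1>\cdots>d_r\}$ of its principal hook lengths, which is an arbitrary finite set of distinct positive \emph{odd} integers with $\sum_i d_i=|\lambda|$; hence $\sum_{\lambda\ \mathrm{s.c.}}q^{|\lambda|}=(-q;q^2)_\infty$. First I would record the symmetry input that produces the factor $2$: conjugation across the main diagonal, $(i,j)\mapsto(j,i)$, is an involution on the cells of $\lambda$, and it has no fixed cell of hook length $2m$ because a diagonal cell of a self-conjugate partition always has \emph{odd} hook length. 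Thus the $2m$-hooks of each individual $\lambda$ pair off and their number is even. The real content is to extract the remaining factor $m$, and this will turn out to be a \emph{global} phenomenon rather than a per-$\lambda$ one.

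Next I would compute the number of $2m$-hooks of $\lambda$ from $D$ via the bead (Maya diagram) description. Writing $M\subset\Z+\tfrac12$ for the set of bead positions, a cell of hook length $h$ corresponds to a bead at $p$ with an empty site at $p-h$, and self-conjugacy is the symmetry $x\in M\iff -x\notin M$, which converts the condition $p-2m\notin M$ into $2m-p\in M$. A short computation then gives, for each self-conjugate $\lambda$,
$$\#\{2m\text{-hooks of }\lambda\}=\#\{a\in D:4m-a\in D\}+2\,\#\{a\in D:\ a>4m,\ a-4m\notin D\}.$$
Summing against $q^{|\lambda|}$, and using that imposing ``$a\in D$'' (resp. ``$a\notin D$'') multiplies $(-q;q^2)_\infty$ by $q^a/(1+q^a)$ (resp. $1/(1+q^a)$), both contributions assemble into the single clean formula
$$\sum_{n\ge0}a^*_{2m}(n)\,q^n=(-q;q^2)_\infty\cdot q^{4m}\sum_{A\ \mathrm{odd}}\frac{1}{(1+q^A)(1+q^{4m-A})},$$
where $A$ runs over all odd integers and each summand is a genuine power series (only finitely many contribute to a given coefficient). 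Folding by $A\leftrightarrow 4m-A$ and $A\leftrightarrow A+4m$ rewrites the bracketed series $\Sigma(q)$ in the manifestly integral shape $\Sigma(q)=2q^{4m}\sum_{i=1}^{m}\big((1+q^{2i-1})(1+q^{4m-2i+1})\big)^{-1}+2\sum_{c\ge1,\ \mathrm{odd}}q^{4m+c}\big((1+q^{4m+c})(1+q^c)\big)^{-1}$.

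Since $(-q;q^2)_\infty$ has constant term $1$ and integer coefficients, it is a unit of $\Z[[q]]$ with integral inverse; therefore the conjecture $a^*_{2m}(n)\equiv0\pmod{2m}$ for all $n$ is \emph{equivalent} to the single statement that every coefficient of $\Sigma(q)$ is divisible by $2m$. As a reassuring check, the lowest term is exactly $[q^{4m}]\Sigma=2m$, counting the $2m$ odd residues $A\in\{1,3,\dots,4m-1\}$ with $0<A<4m$, and a direct expansion shows the next several coefficients vanish. This reduction is, I believe, the right way to organize the problem and is the part I am confident about.

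The remaining and genuinely hard step is to prove $2m\mid[q^N]\Sigma$ for every $N$. I see three routes. The first is to find a closed form for $\Sigma$: the pairing $A\leftrightarrow 4m-A$ together with the partial-fraction identity $\frac{1}{(1+q^A)(1+q^{4m-A})}=\frac{1}{q^{4m-A}-q^{A}}\left(\frac{1}{1+q^A}-\frac{1}{1+q^{4m-A}}\right)$ telescopes the central block, and one hopes the tail sums collapse to a rational function carrying an explicit factor $2m$. The second is a roots-of-unity filter: because $\Sigma$ is a sum over the $2m$ odd residue classes modulo $4m$ that is invariant under both the reflection $A\mapsto 4m-A$ and the shift $A\mapsto A+4m$, one can try to express $[q^N]\Sigma$ as a sum over orbits of an induced $\Z/2m$-action, each orbit contributing a multiple of $2m$. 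The third is bijective: construct a fixed-point-free action of $\Z/2m$ (or of $\Z/2\times\Z/m$, refining the diagonal involution above) on the set of pairs $(\lambda,v)$ with $\lambda$ self-conjugate of size $n$ and $v$ a cell of hook length $2m$, which would give $2m\mid a^*_{2m}(n)$ at once. I expect the bijective route to be the most illuminating but the hardest to realize, since cyclically permuting $2m$-hooks must preserve both $|\lambda|$ and self-conjugacy; the roots-of-unity route seems the most likely to succeed, with the reinterpretation of $[q^N]\Sigma$ as an orbit count being the crux of the argument.
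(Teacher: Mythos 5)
This statement is not a theorem of the paper: it is restated there from \cite{BBCFW} as an \emph{open conjecture}, with no proof offered, so there is no paper proof against which to compare your argument. Measured on its own terms, your proposal is honest and largely correct as far as it goes, but it is not a proof. The parts you actually carry out do check: self-conjugate partitions are parametrized by their sets $D$ of distinct odd principal hook lengths; conjugation pairs off the cells of hook length $2m$ (no diagonal cell can be fixed, diagonal hooks being odd), giving $2 \mid a^*_{2m}(n)$ partition by partition; your Maya-diagram count of the $2m$-hooks in terms of $D$ is correct; and the resulting identity $\sum_{n\ge 0} a^*_{2m}(n)\, q^n = (-q;q^2)_\infty\, q^{4m} \sum_{A \text{ odd}} \bigl((1+q^A)(1+q^{4m-A})\bigr)^{-1}$, together with the reduction (via invertibility of $(-q;q^2)_\infty$ in $\Z[[q]]$) to the claim that $2m$ divides every coefficient of $\Sigma(q)$, is valid. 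I verified the formula independently and spot-checked it: for $m=2$, the self-conjugate partitions $(4,2,1,1)$ and $(3,3,2)$ of $8$ each contribute exactly two $4$-hooks, so $a^*_4(8)=4$, consistent with your series and with the fact that the divisibility by $m$ cannot be seen partition by partition.

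The genuine gap is exactly where you flag it: the factor $m$. Your folded form makes the factor $2$ explicit, but divisibility of the coefficients of the folded series by $m$ is never established, and nothing in the proposal supplies it beyond low-order numerical checks. The three routes you list are research directions, not arguments, and each faces an obstruction you do not resolve: the partial-fraction telescoping introduces factors $(q^{4m-A}-q^A)^{-1}$ that destroy the manifest integrality you need; the ``roots-of-unity filter'' would require a $\Z/2m$-action on objects indexing a fixed coefficient $[q^N]\Sigma$, but the shift $A \mapsto A+4m$ changes the order of vanishing of the summand and hence does not permute contributions to a fixed $N$; and a fixed-point-free $\Z/2m$-action on pairs $(\lambda,v)$ with $v$ a $2m$-hook of a self-conjugate $\lambda$ is precisely the conjecture in bijective clothing, not a step toward it. So what you have is a correct and potentially useful reduction of the conjecture to a clean divisibility statement about an explicit Lambert-type series, but the conjecture itself remains open after your proposal, just as it does in the paper.
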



\end{document}